\title{Rigidity of spherical codes}
\author{Henry Cohn}
\address{Microsoft Research New England\\\newline
One Memorial Drive\\
Cambridge, MA 02142}
\email{cohn@microsoft.com}
\author{Yang Jiao}
\address{Physical Sciences--Oncology Center\\
Princeton University\\\newline
Princeton, New Jersey 08544}
\email{yjiao@princeton.edu}
\author{Abhinav Kumar}
\address{Department of Mathematics\\
Massachusetts Institute of Technology\\\newline
Cambridge, MA 02139}
\email{abhinav@math.mit.edu}
\author{Salvatore Torquato}
\address{Department of Chemistry\\
Princeton University\\\newline
Princeton, New Jersey 08544}
\email{torquato@electron.princeton.edu}
\let\xysavmatrix\xymatrix
\def\xymatrix{\disablesubscriptcorrection\xysavmatrix}
\newcommand{\sB}{\mathcal{B}}
\newcommand{\sC}{\mathcal{C}}
\newcommand{\sCl}[2]{\mathcal{C}_{#1\textup{--}#2}}
\newcommand{\F}{{\mathbb F}}
\newcommand{\Proj}{{\mathbb P}}
\newcommand{\eps}{\varepsilon}
\DeclareMathOperator*{\bigexpt}{\raisebox{-2.5pt}{\mbox{\LARGE $\mathbb{E}$}}}
\DeclareMathOperator*{\Prob}{\textrm{Pr}}
\renewcommand{\Re}{\mathop{\textrm{Re}}}
\theoremstyle{plain}
\newtheorem{theorem}{Theorem}[section]
\newtheorem{corollary}{Corollary}[section]
 \let\c@corollary\c@theorem
\newtheorem{lemma}{Lemma}[section]
 \let\c@lemma\c@theorem
\newtheorem{proposition}{Proposition}[section]
 \let\c@proposition\c@theorem
\numberwithin{equation}{section}
\begin{document}

\begin{asciiabstract}
A packing of spherical caps on the surface of a sphere (that is, a
spherical code) is called rigid or jammed if it is isolated within the
space of packings.  In other words, aside from applying a global isometry,
the packing cannot be deformed.  In this paper, we systematically study
the rigidity of spherical codes, particularly kissing configurations.
One surprise is that the kissing configuration of the Coxeter-Todd
lattice is not jammed, despite being locally jammed (each individual
cap is held in place if its neighbors are fixed); in this respect, the
Coxeter-Todd lattice is analogous to the face-centered cubic lattice in
three dimensions. By contrast, we find that many other packings have
jammed kissing configurations, including the Barnes-Wall lattice and
all of the best kissing configurations known in four through twelve
dimensions. Jamming seems to become much less common for large kissing
configurations in higher dimensions, and in particular it fails for
the best kissing configurations known in 25 through 31 dimensions.
Motivated by this phenomenon, we find new kissing configurations in
these dimensions, which improve on the records set in 1982 by the
laminated lattices.
\end{asciiabstract}

\begin{abstract}
A packing of spherical caps on the surface of a sphere (that
is, a spherical code) is called rigid or jammed if it is
isolated within the space of packings.  In other words, aside
from applying a global isometry, the packing cannot be
deformed.  In this paper, we systematically study the rigidity
of spherical codes, particularly kissing configurations. One
surprise is that the kissing configuration of the Coxeter--Todd
lattice is not jammed, despite being locally jammed (each
individual cap is held in\break{} 
place if its neighbors are fixed); in this respect, the
Coxeter--Todd lattice is analogous\break{} 
to the face-centered cubic lattice in three dimensions. By
contrast, we find that many\break{} 
other packings have jammed kissing configurations, including
the Barnes--Wall\nobreak{} lat\nobreak{}tice\break{} 
and all of the best kissing configurations known in four
through twelve dimensions. Jamming seems to become much less
common for large kissing configurations in higher dimensions,
and in particular it fails for the best kissing configurations
known in $25$ through $31$ dimensions. Motivated by this
phenomenon, we find new kissing configurations in these
dimensions, which improve on the records set in 1982 by the
laminated lattices.
\end{abstract}

\maketitle

\section{Introduction}

One of the key qualitative properties of a packing is whether
it is jammed, that is, whether the particles are locked into
place.  Jamming is of obvious scientific importance if we are
using the packing to model a granular material.  Furthermore,
it plays a central role in studying local optimality of
packings, because one natural way to try to improve a packing
is to deform it so as to open up more space.

Jamming has been extensively studied for packings in Euclidean
space. See, for example, Torquato and Stillinger~\cite{TS} and
the references cited therein.  However, it has been less
thoroughly investigated in other geometries.  In this paper, we
investigate jamming for sphere packings in spherical geometry,
that is, packings of caps on the surface of a sphere.  Jamming
has previously been studied for sphere packings in $S^2$ (see
Tarnai and G\'asp\'ar~\cite{TG}), but there seems to have been
little investigation in higher dimensions.

A packing of congruent spherical caps on the unit sphere
$S^{n-1}$ in $\R^n$ yields a \emph{spherical code} (that is, a
finite subset of $S^{n-1}$) consisting of the centers of the
caps. The \emph{minimal distance} of such a code is the
smallest angular separation between distinct points in the
code.  In other words, the cosine of the minimal distance is
the greatest inner product between distinct points in the code.
The \emph{packing radius} is half the minimal distance, because
spherical caps of this radius centered at the points of the
code will not overlap, except tangentially.  A spherical code
is \emph{optimal} if its minimal distance is as large as
possible, given the dimension of the code and the number of
points it contains. (Note that this notion of optimality is
different from requiring that no more caps of the same size can
be added without causing overlap. Neither of these two notions
implies the other.)

Spherical codes arise naturally in many parts of mathematics
and science (see Cohn~\cite{C} for a more extensive discussion).
For example, in $\R^3$ they model pores in pollen grains or
colloidal particles adsorbing to the surface of a droplet in a
emulsion formed by two immiscible liquids. In higher
dimensions, they can be used as error-correcting codes for a
constant-power radio transmitter. Furthermore, many beautiful
spherical codes arise in Lie theory, discrete geometry, or the
study of the sporadic finite simple groups.

A \emph{deformation} of a spherical code is a continuous motion
of the points such that the minimal distance never drops below
its initial value. A deformation is an \emph{unjamming} if it
does not simply consist of applying global isometries (that is,
the pairwise distances do not all remain constant).  A
spherical code is called \emph{rigid} or \emph{jammed} if it
has no unjamming.  It is called \emph{locally jammed} if no
single point can be continuously moved while all the others are
held fixed.

For example, in the face-centered cubic packing of balls in
$\R^3$, the \emph{kissing configuration} (that is, points of
tangency on a given ball) consists of the vertices of a
cuboctahedron.  This code is locally jammed, but it is not in
fact jammed (see Conway and Sloane~\cite[page~29]{SPLAG} or
\fullref{prop:Anunjammed} below).  However, it can be deformed
into an optimal\break 
spherical code, namely the vertices of a regular icosahedron,
and the icosahedron is \break 
then a rigid code with a higher minimal distance than that of
the cuboctahedron.

As this example shows, deforming a spherical code is one way to
improve it.  Some optimal codes are not jammed; for example,
the best five-point codes in $S^2$ consist of two antipodal
points and three points orthogonal to them, and the three
points can move freely as long as they remain separated by at
least an angle of $\pi/2$.  Furthermore, computer experiments
suggest that an optimal code can have \emph{rattlers}, that is,
points not\pagebreak{} 
in contact with any other point, although no such case has ever
been rigorously analyzed. However, despite these issues,
rigidity is a powerful criterion for understanding when a\break 
code can be improved.

Note that whether a configuration is jammed depends on the
ambient space.  For example, the vertices of a square are
jammed in $S^1$ but not in $S^2$.

For infinite packings in Euclidean space, there are more subtle
distinctions between different types of jamming
(see Bezdek, Bezdek and Connelly~\cite{BBC98} and Torquato and
Stillinger~\cite{categories}) based on what sorts of motions are
allowed. For example, are all but finitely many particles held
fixed?  Are shearing motions allowed?  However, these issues do
not arise for packings in compact spaces.

Nevertheless, jamming seems to be a more subtle phenomenon on
spheres than it is in Euclidean space.  In Euclidean space,
there is an efficient algorithm to test for jamming (see Donev,
Torquato, Stillinger and Connelly~\cite{DTSC}) but on spheres
we do not know such an algorithm. The difficulty is caused by
curvature, which complicates certain arguments.  For example,
in Euclidean space every infinitesimal unjamming extends to an
actual unjamming, as we will explain in
\fullref{section:infinijam}, but the corresponding procedure
does not work on spheres.

\begin{table}[ht!]
\centering
\begin{tabular}{cc|cc}
Dimension & Kissing number & Dimension & Kissing number\\ \hline
$1$ & $\mathbf{2}$ & $17$ & $5346$\\
$2$ & $\mathbf{6}$ & $18$ & $7398$\\
$3$ & $\mathbf{12}$ & $19$ & $10668$\\
$4$ & $\mathbf{24}$ & $20$ & $17400$\\
$5$ & $40$ & $21$ & $27720$\\
$6$ & $72$ & $22$ & $49896$\\
$7$ & $126$ & $23$ & $93150$\\
$8$ & $\mathbf{240}$ & $24$ & $\mathbf{196560}$\\
$9$ & $306$ & $25$ & $197040$\\
$10$ & $500$ & $26$ & $198480$\\
$11$ & $582$ & $27$ & $199912$\\
$12$ & $840$ & $28$ & $204188$\\
$13$ & $1154$ & $29$ & $207930$\\
$14$ & $1606$ & $30$ & $219008$\\
$15$ & $2564$ & $31$ & $230872$\\
$16$ & $4320$ & $32$ & $276032$
\end{tabular}
\vspace{0.0cm} 
\caption{\leftskip 25pt\rightskip 25pt 
The best lower bounds known for kissing numbers in up to
thirty-two dimensions. Numbers in bold are known to be optimal
(see Sch\"utte and van\break{} 
der Waerden~{\protect\cite{SW}},
Leven{\v{s}}te{\u\i}n~{\protect\cite{L}}, Odlyzko and
Sloane~{\protect\cite{OS}} and Musin~{\protect\cite{Musin}}).}
\label{table:kissing}
\end{table}

Once we have developed the basic theory of rigidity for spherical
codes, we will devote the rest of this paper to applying it to analyze
specific codes. We will focus primarily on kissing configurations
(that is, spherical codes with minimal angle at least $\pi/3$, or
equivalently the points of tangency in Euclidean space packings),
because they form a rich class of spherical codes and include many of
the most noteworthy examples.  An \emph{optimal kissing configuration}
is one with the largest possible size in its dimension.

As mentioned above, the face-centered cubic kissing configuration is
not rigid, but we will prove that all of the other best configurations
known in up to twelve dimensions are rigid.  Along the way, we
will produce what may be the first exhaustive enumeration of these
configurations in up to eight dimensions, as well as a complete list of the
known examples in nine through twelve dimensions (although we suspect
that more remain to be discovered).  Above twelve dimensions,
the calculations become increasingly
difficult to do, even by computer, but we analyze certain cases that are
susceptible to conceptual arguments.  In particular, we show that the
kissing configuration of the\break 
Coxeter--Todd lattice $K_{12}$ is not rigid, while that of the
Barnes--Wall lattice $\Lambda_{16}$ is, although both lattices
are conjectured to be optimal sphere packings in their
dimensions.

We particularly focus our attention on $25$ through $31$ dimensions,
because of the\break 
remarkably small increases in the record kissing
numbers from each dimension to the\break 
next (see \fullref{table:previouskiss} in
\fullref{section:kissing25to31}
for the old records). The best configurations previously\pagebreak{} 
known were not even locally jammed, but we see no simple way to
deform them so as to increase the kissing number.  However, in
\fullref{section:kissing25to31} we show how to improve on the
known records.  We give a simple argument that shows how to
beat them, as well as a\break{} 
more complicated construction that makes use of a computer
search to optimize the resulting bounds.

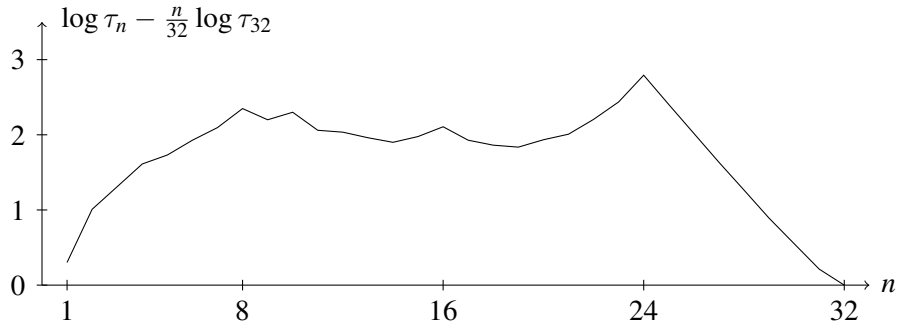
\begin{figure}[ht!]
\begin{center}
\begin{tikzpicture}[xscale=0.3333]
\draw plot coordinates {(1, 0.3016) (2, 1.009) (3, 1.310) (4, 1.612)
  (5, 1.731) (6, 1.928) (7, 2.096) (8, 2.349) (9, 2.200) (10, 2.300)
  (11, 2.060) (12, 2.035) (13, 1.961) (14, 1.900) (15, 1.977) (16,
  2.107) (17, 1.928) (18, 1.862) (19, 1.836) (20, 1.934) (21, 2.008)
  (22, 2.205) (23, 2.437) (24, 2.793) (25, 2.403) (26, 2.019) (27,
  1.635) (28, 1.265) (29, 0.8912) (30, 0.5516) (31, 0.2129) (32, 0)};
\draw[->] (0,0) -- (33,0) node[right] {$n$} coordinate(x axis);
\draw[->] (0,0) -- (0,3.5) node[right] {\ $\log \tau_n - \frac{n}{32} \log
  \tau_{32}$} coordinate(y axis); 
\foreach \x in {1,8,16,24,32}
  \draw (\x cm,2pt) -- (\x cm,-2pt) node[anchor=north] {$\x$};
\foreach \x in {0,1,2,3}
  \draw (-6pt,\x cm) -- (6pt,\x cm) node[anchor=east] {$\x\,\,$}; 
\end{tikzpicture}
\end{center}
\caption{\leftskip 35pt\rightskip 35pt 
A plot of $\log \tau_n - \frac{n}{32} \log \tau_{32}$, where
$\tau_n$ denotes the current record kissing number in $\R^n$}
\label{figure:kissingplot}
\end{figure}

The new records are shown in \fullref{table:kissing}. It is taken
from Conway and Sloane~\cite[page~xxi, Table~I.2(a)]{SPLAG}, with three
exceptions: the entry for $\R^{15}$ was out of date in that table (see
\cite[Chapter~5, Section~4.3]{SPLAG}), the entries for $\R^{13}$
and $\R^{14}$
come from Zinoviev and Ericson~\cite{ZE}, and the entries for $\R^{25}$ through $\R^{31}$
are new results in the present paper.  See also Nebe and Sloane~\cite{NS}.  For the
best upper bounds known in up to twenty-four dimensions, see Mittelmann
and Vallentin~\cite{MV}.

\fullref{figure:kissingplot} shows a logarithmic plot of the
data from \fullref{table:kissing}, normalized for comparison
with $32$ dimensions.  One can see the local maxima
corresponding to the remarkable $E_8$, Barnes--Wall and Leech
lattices in dimensions $8$, $16$ and $24$, respectively.  Note
also that the growth rate of the known kissing numbers drops
dramatically after $24$ dimensions.\pagebreak{} 

\section{Infinitesimal jamming} \label{section:infinijam}

We know of no efficient way to test whether a given spherical code is
jammed.  In principle, it can be done by a finite calculation, at least
if the points in the code have\break 
algebraic numbers as coordinates, by using quantifier
elimination for the first-order theory of the real numbers (see
Tarski~\cite{T}).  (The proof relies on Roth and
Whiteley~\cite[Proposition~3.2]{RW}.) However, quantifier
elimination is not practical in this case.

On the other hand, there are much more efficient tests for a
related concept called infinitesimal jamming, using linear
programming (see Donev, Torquato, Stillinger and
Connelly~\cite{DTSC}). Given a code $\{x_1,\dots,x_N\} \subset
S^{n-1}$, imagine perturbing $x_i$ to $x_i + \varepsilon y_i$.
Then
$$
|x_i + \varepsilon y_i|^2 = 1 + 2  \langle x_i,y_i \rangle \varepsilon + O(\varepsilon^2),
$$
where $\langle\cdot,\cdot\rangle$ denotes the inner product, and
$$
\langle x_i + \varepsilon y_i, x_j + \varepsilon y_j \rangle =
\langle x_i,x_j \rangle +
(\langle x_i,y_j \rangle + \langle x_j,y_i \rangle)\varepsilon  + O(\varepsilon^2).
$$
Thus, to preserve all the constraints up to first order in
$\varepsilon$, we must have $\langle x_i, y_i \rangle = 0$ for\break 
all $i$, and $\langle x_i, y_j \rangle + \langle x_j, y_i
\rangle \le 0$ whenever $\langle x_i, x_j \rangle$ equals the
maximal inner product in the code.  An \emph{infinitesimal
deformation} of the code $\{x_1,\dots,x_N\}$ is a collection\break 
of vectors $y_1,\dots,y_N$ satisfying these constraints.  It is
an \emph{infinitesimal rotation} if\break 
there exists a skew-symmetric matrix $\Phi \in \R^{n \times n}$
such that $y_i = \Phi x_i$ for all $i$, and a\break 
code is \emph{infinitesimally jammed} if every infinitesimal
deformation is an infinitesimal rotation. (Recall that the
skew-symmetric matrices are exactly those in the Lie algebra of
$SO(n)$.)  Note that for an infinitesimal rotation,
$$
\langle x_i,y_j \rangle = \langle x_i,\Phi x_j \rangle
= - \langle \Phi x_i,x_j \rangle = - \langle x_j, y_i\rangle.
$$
Thus, an infinitesimal rotation does not change any distances,
up to first order.  The converse is false (consider a square on
the equator in $S^2$, with an infinitesimal de\-formation 
moving two opposite corners up and the other two down), but it
is true for full-dimensional codes:

\begin{lemma} \label{lemma:spanning}
Let $y_1,\dots,y_N$ be an infinitesimal deformation of a code
$\{x_1,\dots,x_N\}$\break 
in $S^{n-1}$ such that $x_1,\dots,x_N$ span $\R^n$. If $\langle
x_i,y_j \rangle + \langle x_j,y_i \rangle = 0$ for all $i$ and
$j$,\break 
then the deformation is an infinitesimal rotation.
\end{lemma}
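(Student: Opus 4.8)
The plan is to build the skew-symmetric matrix $\Phi$ explicitly out of a basis of $\R^n$ selected from the points of the code. Since $x_1,\dots,x_N$ span $\R^n$, I would first choose indices $i_1,\dots,i_n$ so that $x_{i_1},\dots,x_{i_n}$ form a basis of $\R^n$, and then let $\Phi \in \R^{n\times n}$ be the unique linear map with $\Phi x_{i_k} = y_{i_k}$ for $k = 1,\dots,n$. Everything then reduces to checking two things: that $\Phi$ is skew-symmetric, and that $\Phi x_i = y_i$ holds for \emph{every} $i$, not just the basis indices.

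For skew-symmetry, apply the hypothesis to the pair $(i_k,i_\ell)$ of basis indices:
$$
\langle x_{i_k}, \Phi x_{i_\ell}\rangle + \langle \Phi x_{i_k}, x_{i_\ell}\rangle
= \langle x_{i_k}, y_{i_\ell}\rangle + \langle x_{i_\ell}, y_{i_k}\rangle = 0 .
$$
Because $x_{i_1},\dots,x_{i_n}$ span $\R^n$, bilinearity upgrades this to $\langle u,\Phi v\rangle + \langle \Phi u, v\rangle = 0$ for all $u,v\in\R^n$, i.e.\ $\Phi^{T} = -\Phi$, so $\Phi$ lies in the Lie algebra of $SO(n)$.

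For the remaining points, fix an arbitrary index $i$ and set $z = \Phi x_i - y_i$; it suffices to show $z$ is orthogonal to each basis vector $x_{i_k}$, since those vectors span $\R^n$. Using skew-symmetry of $\Phi$, then $\Phi x_{i_k} = y_{i_k}$, and finally the hypothesis applied to the pair $(i_k,i)$,
$$
\langle x_{i_k}, \Phi x_i\rangle = -\langle \Phi x_{i_k}, x_i\rangle = -\langle y_{i_k}, x_i\rangle = \langle x_{i_k}, y_i\rangle ,
$$
so $\langle x_{i_k}, z\rangle = 0$ for all $k$, hence $z = 0$. Thus $y_i = \Phi x_i$ for all $i$, which is exactly the assertion that the deformation is an infinitesimal rotation.

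The argument is pure linear algebra, and I do not expect any genuine obstacle; the only points requiring care are the order in which the hypothesis is invoked — first among pairs of basis indices to obtain skew-symmetry, then with one basis index and one arbitrary index to pin $\Phi$ down on the rest of the code — together with the elementary fact that a vector orthogonal to a spanning set must vanish. Conceptually, the lemma simply records that the spanning hypothesis rules out the pathology of the equatorial square, where the relevant vectors lie in a proper subspace.
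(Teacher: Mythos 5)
Your proof is correct and rests on the same linear-algebra identities as the paper's: the hypothesis $\langle x_i,y_j\rangle+\langle x_j,y_i\rangle=0$ combined with the fact that only the zero vector is orthogonal to a spanning set. The only (immaterial) difference is the order of steps — the paper first shows that linear relations among the $x_i$ are inherited by the $y_i$, so that $\Phi$ is well defined on all the code points at once, and then verifies skew-symmetry, whereas you define $\Phi$ on a basis, prove skew-symmetry there, and then use it to pin down $\Phi$ on the remaining points.
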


\begin{proof}
First, note that if a linear combination $\sum_i \alpha_i x_i$
vanishes, then $\sum_i \alpha_i y_i = 0$ as well, because
\begin{align*}
\bigg\langle \sum_i \alpha_i y_i, x_j \bigg\rangle & =
 \sum_i \alpha_i \langle y_i, x_j \rangle\\
 &  =
- \sum_i \alpha_i \langle x_i, y_j \rangle\\
&  =
- \bigg\langle \sum_i \alpha_i x_i, y_j \bigg\rangle\\
&  = - \langle \hspace{1pt} 
0, y_j \rangle = 0
\end{align*}
for all $j$ (and only the zero vector is orthogonal to a set that spans
$\R^n$).  Thus, there is a well-defined linear map $\Phi$ such that
$\Phi x_i = y_i$.  Furthermore, the identity
$$
\langle x_i, \Phi x_j \rangle = \langle x_i, y_j\rangle = - \langle y_i, x_j \rangle
= - \langle \Phi x_i, x_j \rangle
$$
implies that $\Phi$ is skew-symmetric, because it holds for a basis of
$\R^n$ and hence $\langle u, \Phi v \rangle = - \langle \Phi u, v
\rangle$ for all $u,v \in \R^n$.
\end{proof}

Every infinitesimally jammed code is in fact jammed.  This is not
obvious: one cannot simply differentiate a purported unjamming motion
to get an infinitesimal unjamming, without dealing with two
technicalities, namely whether there is a differentiable unjam\-
ming and what happens if all the first-order derivatives
vanish. However, it is true, as pointed out by
Connelly~\cite[Remark~4.1]{Con} and by Roth and
Whiteley~\cite[Theorem~5.7]{RW}:

\begin{theorem}[Connelly, Roth and Whiteley]
Every infinitesimally jammed spherical code is jammed.
\end{theorem}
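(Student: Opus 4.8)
The plan is to prove the contrapositive: if the code $x=(x_1,\dots,x_N)$ in $S^{n-1}$ is not jammed, then it is not infinitesimally jammed. Set $c=\max_{i\ne j}\langle x_i,x_j\rangle$, and let $C\subseteq (S^{n-1})^N$ be the set of codes $z$ with $\langle z_i,z_j\rangle\le c$ for all $i\ne j$. This is a compact semialgebraic set, and a code lies in the orbit $O(n)\cdot x$ exactly when its Gram matrix equals that of $x$; so an unjamming is precisely a continuous path in $C$ that starts at $x$ and leaves $O(n)\cdot x$. The first step is to replace such a path by a real-analytic one. Since $O(n)\cdot x$ is compact, hence closed, and $C$ is $O(n)$--invariant, by looking at the first time the path leaves the orbit and composing with a fixed rotation I may assume that $x$ lies in the closure of the semialgebraic set $C\setminus(O(n)\cdot x)$. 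The curve selection lemma then supplies a semialgebraic arc $\gamma\colon[0,\varepsilon)\to C$, which after a reparametrization of the form $t\mapsto t^{q}$ is real-analytic at $0$, with $\gamma(0)=x$ and $\gamma(t)\notin O(n)\cdot x$ for all $t\in(0,\varepsilon)$. This disposes of the first technicality above, namely the existence of a sufficiently smooth unjamming.

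Next I would extract an infinitesimal deformation from $\gamma$. Write its Taylor expansion as $\gamma(t)=x+t^{k}a+O(t^{k+1})$, where $a=(a_1,\dots,a_N)\ne 0$ is the first nonvanishing coefficient; using the first nonvanishing \emph{term} rather than the first derivative is what addresses the second technicality, the possible vanishing of all first-order derivatives. Reading off the coefficient of $t^{k}$ in $|\gamma_i(t)|^2=1$ gives $\langle x_i,a_i\rangle=0$ for all $i$, while in $\langle\gamma_i(t),\gamma_j(t)\rangle\le c$ for a pair with $\langle x_i,x_j\rangle=c$ it gives $\langle x_i,a_j\rangle+\langle x_j,a_i\rangle\le 0$. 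Hence $a$ is an infinitesimal deformation; if the code were infinitesimally jammed, $a$ would have to be an infinitesimal rotation, say $a_i=\Phi x_i$ with $\Phi$ skew-symmetric.

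The next step is to ``peel off'' this rotation and iterate. Since $e^{-t^{k}\Phi}\in O(n)$ and $C$ is $O(n)$--invariant, $\gamma^{(1)}(t):=e^{-t^{k}\Phi}\gamma(t)$ is again a real-analytic arc in $C$ with $\gamma^{(1)}(0)=x$ and $\gamma^{(1)}(t)\notin O(n)\cdot x$ for $t>0$, but its order-$t^{k}$ coefficient is now $a_i-\Phi x_i=0$, so $\gamma^{(1)}(t)-x$ vanishes to order at least $k+1$. Since $\gamma^{(1)}(t)\ne x$ for $t>0$, the arc $\gamma^{(1)}$ is again nonconstant and the argument applies to it; under the standing hypothesis of infinitesimal jamming, iterating produces arcs $\gamma^{(m)}$ whose orders of vanishing $k_0<k_1<k_2<\cdots$ strictly increase, and in particular tend to infinity. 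To conclude, consider $\delta(t):=\mathrm{dist}(\gamma(t),\,O(n)\cdot x)$, computed in $\R^{nN}$. As the composite of a semialgebraic arc with the (semialgebraic) distance function to the semialgebraic set $O(n)\cdot x$, it is a semialgebraic function on $[0,\varepsilon)$; it vanishes at $0$ and is strictly positive for $t>0$, so near $0$ it obeys a power-law lower bound $\delta(t)\ge c_0 t^{\alpha}$ for some $c_0,\alpha>0$ (a {\L}ojasiewicz-type inequality, from the Puiseux expansion of a one-variable semialgebraic function). On the other hand $O(n)$ acts on $\R^{nN}$ by isometries and $x\in O(n)\cdot x$, so $\delta(t)=\mathrm{dist}(\gamma^{(m)}(t),\,O(n)\cdot x)\le\|\gamma^{(m)}(t)-x\|=O(t^{k_m})$ for every $m$; comparing the two estimates forces $\alpha\ge k_m$ for all $m$, which is impossible. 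Therefore at some stage $\gamma^{(m)}$ must produce an infinitesimal deformation that is not an infinitesimal rotation, so the code is not infinitesimally jammed.

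The hard part is the first step. One genuinely must pass from an arbitrary continuous unjamming to a real-analytic (or Puiseux) arc before differentiating, and one must be careful that the code need not span $\R^n$ and that an unjamming path may drift in and out of the orbit before escaping it for good. After that, the only nonelementary inputs are the curve selection lemma and the semialgebraic growth estimate; the peeling argument, together with that estimate, is what substitutes for the implication ``a nontrivial unjamming has a nontrivial first-order part'', which is valid in Euclidean space but may fail on the sphere. As an alternative to the peeling and the {\L}ojasiewicz estimate, one could transport $\gamma$ into a fixed real-analytic slice through $x$ that is transverse to the orbit (using a slice theorem for the action of the compact group $O(n)$): there a nonconstant arc would have a leading Taylor coefficient that is simultaneously an infinitesimal rotation, hence tangent to the orbit, and tangent to the slice, hence zero --- a contradiction.
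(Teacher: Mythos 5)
Your proof is correct, and it takes a genuinely different route from the paper, which does not prove the theorem directly at all: the paper cites Connelly and Roth--Whiteley, who establish that first-order rigidity implies rigidity in the more general setting of tensegrity frameworks, and reduces the spherical-code case to that setting by joining each point to the origin with a bar and inserting struts between neighboring points. You instead give a self-contained argument. Your reduction of an arbitrary continuous unjamming to a real-analytic arc via the curve selection lemma (after translating so that $x$ lies in the closure of $C\setminus O(n)\cdot x$, using that the orbit is closed and is exactly the set of configurations with the Gram matrix of $x$) correctly disposes of the first technicality the paper flags, and your device of repeatedly peeling off $e^{-t^{k_m}\Phi_m}$ for the leading infinitesimal rotation, combined with the Puiseux/{\L}ojasiewicz lower bound $\mathrm{dist}(\gamma(t),O(n)\cdot x)\ge c_0t^{\alpha}$ against the upper bound $O(t^{k_m})$ with $k_m\to\infty$, correctly handles the second technicality (the possible vanishing of all first-order derivatives); the key facts you need --- that the leading Taylor coefficient of an analytic arc in $C$ satisfies the defining inequalities of an infinitesimal deformation, and that $O(n)$ acts by isometries so peeling does not change the distance to the orbit --- all check out. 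What the paper's route buys is brevity and a pointer to a more general theorem about tensegrities; what yours buys is a proof complete modulo standard semialgebraic inputs (curve selection, one-variable Puiseux expansions) that makes explicit where compactness and algebraicity enter. Your closing alternative via a slice theorem is only a sketch (one would need a real-analytic slice and a regular factorization $\gamma(t)=g(t)\sigma(t)$), but the main argument does not depend on it.
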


The cited papers deal with the more general setting of tensegrity
frameworks, in which movable points can be connected by \emph{bars}
(with fixed lengths), \emph{cables} (with specified maximum lengths) or
\emph{struts} (with specified minimum lengths), and they prove
that\pagebreak{}\break{}
infinitesimal jamming implies jamming in this setting.  For the special
case of spherical codes, we connect each point in the code to the origin
using a bar, and we insert struts between neighboring points (that is, those
at the minimal distance).

We do not know whether every jammed spherical code that spans the ambient
space\break
is infinitesimally jammed.  For tensegrity frameworks, the
corresponding statement\break
is not true: if we place bars along the edges of a regular
octahedron, and use two additional bars to connect its center
with a pair of opposite vertices, then the framework is rigid,
but flexing the center orthogonally to the two adjacent bars is
a nontrivial infinitesimal deformation. We have not found such
an example for spherical codes, but we expect that there is
one.  By contrast, infinitesimal jamming is equivalent to
jamming for periodic packings in Euclidean space (see Donev,
Torquato, Stillinger and Connelly~\cite{DTSC}).  Specifically,
if we perturb $x$ to $x+\eps y$ and $x'$ to $x'+\eps y'$, then
$$
|(x + \varepsilon y) - (x' + \varepsilon y')|^2 = |x-x'|^2 +
2 \langle x-x', y-y' \rangle \varepsilon + |y-y'|^2 \varepsilon^2.
$$
The second-order term is always nonnegative, so nonnegativity of the
first-order term\break
suffices to produce an actual unjamming.
(De\nobreak{}form\nobreak{}ing\nobreak{}
the\nobreak{} un\nobreak{}der\nobreak{}ly\nobreak{}ing\nobreak{} lat\nobreak{}tice\nobreak{} com\nobreak{}pli\nobreak{}cates\break
the analysis, but the result remains true;
see~\cite[Appendix~C]{DTSC}.) What goes wrong in the spherical
case is that $x+\varepsilon y$ is no longer a unit vector and
must be normalized, which causes the distances to decrease.

This is not merely a technicality: there seems to be no simple method
to turn an infinitesimal unjamming into an actual unjamming.
Nevertheless, in all our examples, we have been able to accomplish this
(with some effort).

The linear programming algorithm for infinitesimal rigidity testing
works as follows. By \fullref{lemma:spanning}, to test whether a
full-dimensional code is infinitesimally jammed, we need only check
for each pair of points whether the distance between them can be
changed.  In other words, in an infinitesimal deformation, are the
maximum and minimum of $\langle x_i,y_j \rangle + \langle x_j,y_i
\rangle$ zero for all $i$ and $j$?  For each $i$ and $j$, this gives
rise to two linear programming problems, because we are imposing
linear constraints on the perturbation vectors $y_1,\dots,y_N$ and
maximizing or minimizing the linear function $\langle x_i,y_j \rangle
+ \langle x_j,y_i \rangle$.  (Of course, when $\langle x_i, x_j
\rangle$ is maximal in the code, the definition\break
of an infinitesimal deformation requires that $\langle x_i,y_j
\rangle + \langle x_j, y_i \rangle \le 0$, so maximizing this
linear functional is trivial. However, the other cases are
nontrivial.)  The code is\break
infinitesimally jammed if and only if the optima in all these
linear programs are zero.  If not, then solving the linear
programs will produce an infinitesimal unjamming, provided that
we also bound the coordinates of the perturbation vectors (to
avoid unbounded linear programs).

\enlargethispage{0.12cm} 

\nopagebreak 
In some cases we are aided by symmetry, because we only need to
check one \nopagebreak{}representa\-\nopagebreak{}tive\nopagebreak{} 
from each orbit of the action of the code's symmetry group on
pairs of points in the\pagebreak{}\break{}
code. For example, if the symmetry group acts
distance-transitively, then we only need to check one pair of
points at each distance. Using the approach of Donev, Torquato,
Stillinger and Connelly~\cite{DTSC}, we can even reduce to
solving one linear program, at the cost of randomization.
Specifically, consider maximizing the linear combination
$$
\sum_{i,j} c_{i,j} (\langle x_i,y_j \rangle + \langle x_j,y_i
\rangle),
$$
where the coefficients $c_{i,j}$ are chosen randomly from the interval
$[-1,1]$. With probability $1$, this approach will produce an
infinitesimal unjamming if one exists.  Thus, if the optimum is zero,
then we can be confident that the code is jammed, although this does
not constitute a proof.

For most of the examples in this paper, we give short conceptual proofs
of jamming.  However, for some cases we must rely on computer
calculations.  In these cases, we
have given rigorous, computer-assisted proofs by using exact rational
arithmetic via the \texttt{QSopt\_ex} linear programming software
of Applegate, Cook, Dash and Espinoza~\cite{ACDE} and checking every pair of points in the code.

\section{The kissing configurations of root lattices}
\label{rootlattices}

We begin by proving that the root systems $D_n$ (for $n \ge 4$) and
$E_6$, $E_7$ and $E_8$ are infinitesimally jammed, while $A_n$ is not
jammed (for $n \ge 3$). Note that these root systems are the kissing
configurations of the corresponding root lattices.

In this section, we will use spheres of diameter $\sqrt{2}$ instead of
$1$, because that is\break
standard for these root systems and makes the inner products
integral.  Note that the theory of infinitesimal jamming in no
way depends on this normalization.

The following elementary lemma will play a key role in the proofs:

\begin{lemma} \label{lemma:reducedim}
Let $\mathcal{C}$ and $\mathcal{D}$ be spherical codes with
$\mathcal{C} \subseteq \mathcal{D}$ and with the same minimal distance.
If $\mathcal{C}$ is infinitesimally jammed within the vector space it
spans, then in any infinitesimal deformation of $\mathcal{D}$, the
inner products between points in $\mathcal{C}$ are unchanged (up to
first order).
\end{lemma}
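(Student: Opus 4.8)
The plan is to restrict the given infinitesimal deformation of $\mathcal{D}$ to the points lying in $\mathcal{C}$, push it forward under orthogonal projection onto the span $V$ of $\mathcal{C}$, recognize the result as an infinitesimal deformation of $\mathcal{C}$ within $V$, and then invoke the hypothesis that $\mathcal{C}$ is infinitesimally jammed within $V$. Throughout, write $\mathcal{D} = \{x_1,\dots,x_N\}$ with $\mathcal{C} = \{x_1,\dots,x_M\}$, let $V = \operatorname{span}\{x_1,\dots,x_M\}$, and let $\pi\colon \R^n \to V$ be the orthogonal projection.

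Suppose $y_1,\dots,y_N$ is an infinitesimal deformation of $\mathcal{D}$. Because $x_i \in V$ for $i \le M$, we have $\langle x_i, \pi(y_j) \rangle = \langle x_i, y_j \rangle$ whenever $i,j \le M$; in particular $\langle x_i, \pi(y_i) \rangle = \langle x_i, y_i \rangle = 0$, and $\langle x_i, \pi(y_j) \rangle + \langle x_j, \pi(y_i) \rangle = \langle x_i, y_j \rangle + \langle x_j, y_i \rangle$ for all $i,j \le M$. So the projection changes none of the bilinear quantities we care about. Next I would check that $\pi(y_1),\dots,\pi(y_M)$ is an infinitesimal deformation of $\mathcal{C}$ regarded as a code in $V$: the equalities $\langle x_i, \pi(y_i) \rangle = 0$ were just noted, and since $\mathcal{C} \subseteq \mathcal{D}$ have the same minimal distance, a pair $x_i,x_j$ with $i,j \le M$ attains the maximal inner product of $\mathcal{C}$ exactly when it attains the maximal inner product of $\mathcal{D}$, in which case $\langle x_i, y_j \rangle + \langle x_j, y_i \rangle \le 0$ and hence $\langle x_i, \pi(y_j) \rangle + \langle x_j, \pi(y_i) \rangle \le 0$. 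Thus all the defining constraints hold.

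Finally, since $\mathcal{C}$ is infinitesimally jammed within $V$ and spans $V$, the deformation $\pi(y_1),\dots,\pi(y_M)$ must be an infinitesimal rotation: there is a skew-symmetric map $\Phi$ on $V$ with $\pi(y_i) = \Phi x_i$ for $i \le M$. By the computation preceding \fullref{lemma:spanning}, an infinitesimal rotation changes no inner product to first order, so $\langle x_i, \pi(y_j) \rangle + \langle x_j, \pi(y_i) \rangle = \langle x_i, \Phi x_j \rangle + \langle x_j, \Phi x_i \rangle = 0$. Combining this with the identity $\langle x_i, \pi(y_j) \rangle + \langle x_j, \pi(y_i) \rangle = \langle x_i, y_j \rangle + \langle x_j, y_i \rangle$ from the previous paragraph gives $\langle x_i, y_j \rangle + \langle x_j, y_i \rangle = 0$ for all $i,j \le M$, which is precisely the statement that the inner products among points of $\mathcal{C}$ are unchanged, up to first order, in the given deformation of $\mathcal{D}$.

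I do not expect a serious obstacle here. The one point that needs care is that the perturbation vectors $y_i$ attached to points of $\mathcal{C}$ need not lie in $V$; this is exactly why the orthogonal projection $\pi$ is inserted, and why one must note explicitly that $\pi$ leaves the quantities $\langle x_i,y_j\rangle + \langle x_j,y_i\rangle$ (for $i,j \le M$) unchanged, so that jamming of $\mathcal{C}$ inside $V$ can be applied without losing information about the original deformation.
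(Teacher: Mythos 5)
Your proposal is correct and follows essentially the same route as the paper: orthogonally project the perturbation vectors onto the span of $\mathcal{C}$, observe that this changes none of the quantities $\langle x_i,y_j\rangle + \langle x_j,y_i\rangle$ for points of $\mathcal{C}$ (since those points lie in the span), verify that the projections give an infinitesimal deformation of $\mathcal{C}$ (using the equal minimal distances), and then apply infinitesimal jamming of $\mathcal{C}$ within its span. Your explicit remark that an infinitesimal rotation leaves all inner products unchanged to first order is the same fact the paper uses implicitly from the computation preceding \fullref{lemma:spanning}.
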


The interesting case is when $\mathcal{C}$ is lower dimensional than
$\mathcal{D}$.

\begin{proof}
Let $x$ and $y$ be points in $\mathcal{C}$, and let $u$ and $v$ be
their perturbations in an infinitesimal deformation of $\mathcal{D}$.
We write $u=u_{\mathcal{C}} + u_\perp$ and $v=v_{\mathcal{C}} + v_\perp$,
where $u_{\mathcal{C}}$ and $v_{\mathcal{C}}$ are in the span of
$\mathcal{C}$ while $u_\perp$ and $v_\perp$ are in the orthogonal
complement of the span.\pagebreak

The orthogonal projections to the span of $\mathcal{C}$ yield an
infinitesimal deformation of $\mathcal{C}$.  (Here we need
$\mathcal{C}$ and $\mathcal{D}$ to have the same minimal distance,
since otherwise the\break
conditions on which inner products can increase will differ.)
Thus, because $\mathcal{C}$ is infinitesimally jammed within
its span, $\langle x, v_{\mathcal{C}} \rangle + \langle
u_{\mathcal{C}}, y \rangle = 0$. Furthermore, $\langle x,
v_\perp \rangle = \langle y, u_\perp \rangle = 0$.  It follows
that $\langle x, v \rangle + \langle u, y \rangle = 0$, as
desired.
\end{proof}

\begin{lemma}
The $A_2$ root system is infinitesimally jammed.
\end{lemma}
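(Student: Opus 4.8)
The plan is to work directly in the two-dimensional space spanned by the $A_2$ root system, where the six roots are the vertices of a regular hexagon on the circle of radius $\sqrt{2}$. Label them $x_1,\dots,x_6$ in cyclic order around the hexagon, so that consecutive indices (mod $6$) correspond to roots at angular distance $\pi/3$, which is the minimal distance, with inner product $1$ (the maximal inner product in the code); the pairs $x_i,x_{i+2}$ have inner product $-1$ and $x_i,x_{i+3}=-x_i$ have inner product $-2$, so the contacting pairs are exactly $\{x_i,x_{i+1}\}$ and they form a single $6$-cycle. Since $x_1,\dots,x_6$ span $\R^2$, \fullref{lemma:spanning} reduces the problem to showing that every infinitesimal deformation $y_1,\dots,y_6$ satisfies $\langle x_i,y_j\rangle+\langle x_j,y_i\rangle=0$ for all $i$ and $j$.

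The key observation is that the tangent line to the circle at $x_i$ is one-dimensional, so the constraint $\langle x_i,y_i\rangle=0$ forces $y_i=c_iRx_i$ for a scalar $c_i$, where $R$ denotes rotation by $\pi/2$, which is skew-symmetric. Using $\langle x_j,Rx_i\rangle=-\langle x_i,Rx_j\rangle$, I then compute
$$
\langle x_i,y_j\rangle+\langle x_j,y_i\rangle = c_j\langle x_i,Rx_j\rangle+c_i\langle x_j,Rx_i\rangle = (c_j-c_i)\langle x_i,Rx_j\rangle.
$$
For $j=i+1$ the factor $\langle x_i,Rx_{i+1}\rangle$ depends only on the fixed angle $-\pi/3$ between $x_i$ and $x_{i+1}$, so it is a nonzero constant of one definite sign, independent of $i$. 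Hence the packing inequality $\langle x_i,y_{i+1}\rangle+\langle x_{i+1},y_i\rangle\le 0$ for each contacting pair becomes a one-sided condition on $c_{i+1}-c_i$ of the \emph{same} sign for every $i$ modulo $6$ — say $c_{i+1}\ge c_i$.

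Running around the hexagon, these six inequalities chain into $c_1\le c_2\le\cdots\le c_6\le c_1$, which forces all the $c_i$ to equal a common value $c$. Then $y_i=cRx_i$ for every $i$ with $R$ skew-symmetric, which is precisely an infinitesimal rotation, so the code is infinitesimally jammed. There is no serious obstacle in this argument; the only point that requires a moment's care is verifying that $\langle x_i,Rx_{i+1}\rangle$ has the same sign for all $i$, so that the six inequalities genuinely close into a cycle rather than merely a chain — and this is automatic because consecutive contacting pairs are all related by the same rotation of the plane.
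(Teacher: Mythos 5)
Your proof is correct. The paper does not actually write out a proof of this lemma: it simply remarks that the $A_2$ root system is a regular hexagon and that ``it is easy to show that every regular polygon is infinitesimally jammed.'' Your argument is precisely the easy argument being alluded to --- in the plane the tangent space at each point is one-dimensional, so $y_i = c_i R x_i$, the contact constraints become monotonicity conditions $c_{i+1} \ge c_i$ of a consistent sign around the cycle of contacts, and chaining them around the polygon forces all the $c_i$ to coincide, yielding an infinitesimal rotation. (Two minor remarks: your appeal to the spanning lemma at the outset is superfluous, since you end up exhibiting the skew-symmetric matrix $cR$ directly; and your argument applies verbatim to every regular polygon, which is the form in which the paper uses the observation.)
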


The $A_2$ root system is a regular hexagon, and it is easy to show that
every regular polygon is infinitesimally jammed.  This simple
observation provides a useful tool for analyzing more elaborate
configurations via \fullref{lemma:reducedim}.

\begin{proposition} \label{prop:D4jammed}
The $D_4$ root system is infinitesimally jammed.
\end{proposition}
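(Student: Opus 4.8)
The plan is to apply \fullref{lemma:reducedim} with $\mathcal{C}$ taken to be the regular hexagons sitting inside $D_4$. Recall that the $D_4$ root system consists of the $24$ vectors $\pm e_i \pm e_j$ with $1 \le i < j \le 4$; it spans $\R^4$, its maximal inner product is $1$, and whenever $r$ and $s$ are roots with $\langle r, s\rangle = -1$ the set $\{\pm r, \pm s, \pm(r+s)\}$ consists of six roots forming a regular hexagon in the plane they span (note that $r+s$ lies in the $D_4$ lattice and has squared length $2$, so it is again a root). Since every regular polygon is infinitesimally jammed within its own plane, \fullref{lemma:reducedim} shows that in any infinitesimal deformation $r \mapsto r + \eps y_r$ of $D_4$ the quantity $\langle r, y_s\rangle + \langle s, y_r\rangle$ vanishes whenever $\langle r, s\rangle = \pm 1$, by applying the argument to the hexagon through $r$ and $s$ when $\langle r, s\rangle = -1$ and through $r$ and $-s$ when $\langle r, s\rangle = 1$. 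By \fullref{lemma:spanning}, it then suffices to prove that $\langle r, y_s\rangle + \langle s, y_r\rangle = 0$ for \emph{every} pair of roots, since this makes the deformation an infinitesimal rotation.

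Two cases are immediate. If $s = r$, then $\langle r, y_s\rangle + \langle s, y_r\rangle = 2\langle r, y_r\rangle = 0$ by the definition of an infinitesimal deformation. If $s = -r$, that definition gives $\langle r, y_{-r}\rangle = 0 = \langle -r, y_r\rangle$, so again the expression vanishes. What remains is the case of two orthogonal roots, and this is where I expect the main difficulty to lie: a regular hexagon never contains two orthogonal roots, so \fullref{lemma:reducedim} provides no direct information about such a pair.

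To treat orthogonal roots $r \perp s$ I would bring in the linear relations among the roots. The vector $v = r + s$ has squared length $4$; up to the action of the symmetry group it is either $\pm 2e_k$ or $(\pm 1, \pm 1, \pm 1, \pm 1)$, and in either case one checks that there are exactly six roots $t$ with $\langle v, t\rangle = 2$. These pair up as $\{t, v - t\}$, yielding exactly three ways to write $v$ as a sum of two roots, one of which is $\{r,s\}$ itself; and for any two of these decompositions $v = t+t'$ and $v = u+u'$, all four inner products $\langle t, u\rangle, \langle t, u'\rangle, \langle t', u\rangle, \langle t', u'\rangle$ equal $1$ (the first two sum to $\langle t, v\rangle = 2$, while each is at most $1$). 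Using the vanishing already established in the $\pm 1$ case together with the point constraints $\langle t, y_t\rangle = 0$, a brief computation then shows, for any two of the decompositions, that $\langle t, y_{t'}\rangle + \langle t', y_t\rangle = -(\langle u, y_{u'}\rangle + \langle u', y_u\rangle)$; since three numbers that are pairwise negatives of one another must all vanish, the ``defect'' $\langle t, y_{t'}\rangle + \langle t', y_t\rangle$ of every decomposition is $0$, and in particular $\langle r, y_s\rangle + \langle s, y_r\rangle = 0$. Having verified this for every pair of roots, I would conclude by \fullref{lemma:spanning} that the deformation is an infinitesimal rotation, so $D_4$ is infinitesimally jammed.
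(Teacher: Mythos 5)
Your proof is correct and follows essentially the same route as the paper: regular hexagons through pairs at inner product $\pm 1$ handle those cases via \fullref{lemma:reducedim}, and for an orthogonal pair $r\perp s$ the three decompositions of the norm-$4$ vector $r+s$ into sums of two roots yield three pairwise-negating defects, forcing all of them to vanish. The only difference is presentational: the paper first invokes distance-transitivity of the automorphism group (via triality) to reduce to explicit coordinate representatives, whereas you run the same computation for an arbitrary orthogonal pair directly.
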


\begin{proof}
The minimal vectors of $D_4$ have norm $2$ and the possible inner
products between distinct minimal vectors are $0$, $\pm 1$ and $-2$.
First, note that the automorphism group of $D_4$ acts transitively on
pairs of minimal vectors with a given inner product, so without loss of
generality we can consider just one pair of points at each distance.
(This transitivity fails for $D_n$ with $n>4$, because there are two
orbits for inner product $0$, but the triality symmetry of $D_4$
collapses them to one orbit.)

Furthermore, $D_4$ contains $A_2$, and by \fullref{lemma:reducedim}
the distances in a copy of $A_2$ cannot change because $A_2$ is
infinitesimally jammed within its span.  This takes care of all the
cases except for a pair of orthogonal vectors.

We now have to show that if $\langle x,y \rangle = 0$ then $\langle x,y
\rangle$ does not change in any infinitesimal deformation.  Again by
the distance transitivity of the automorphism group, we may assume $x =
(1,1,0,0)$ and $y = (1,-1,0,0)$. Let $u = (1,0,1,0)$, $v = (1,0,-1,0)$,
$w = (1,0,0,1)$ and $z = (1,0,0,-1)$ be other minimal vectors of
$D_4$. Denote the\break
first order perturbations of $x,y,u,v,w,z$ by
$x',y',u',v',w',z'$. Note that $x + y = u + v = w + z$, and
that $\langle x,u \rangle = \langle x,v \rangle = \langle y,u
\rangle = \langle y,v \rangle = 1$. Thus, by the $A_2$
embedding argument, we have
\begin{align*}
\langle x, u' \rangle + \langle x', u \rangle &= 0, &
\langle y, u' \rangle + \langle y', u \rangle &= 0, \\
\langle x, v' \rangle + \langle x', v \rangle &= 0, &
\langle y, v' \rangle + \langle y', v \rangle &= 0.
\end{align*}
Adding these equations, we get
$$
\langle x + y, u' + v' \rangle + \langle x' + y', u + v \rangle = 0,
$$
or (using $x + y = u + v$)
$$
\langle u + v, u' + v' \rangle + \langle x' + y', x + y \rangle = 0.
$$
Since we know that $\langle u,u' \rangle = 0$, etc., we get (denoting
the first order change $\langle u, v' \rangle + \langle u', v \rangle$
in $\langle u,v \rangle$ by $\delta(u,v)$)
$$
\delta(u,v)  + \delta(x,y)= 0.
$$
Similarly, we have
\begin{align*}
\delta(u,v)  + \delta(w,z) &= 0, \\
\delta(w,z)  + \delta(x,y) &= 0.
\end{align*}
From these three equations, elementary algebra implies $\delta(x,y) =
\delta(u,v) = \delta(z,w) = 0$. This completes the proof.
\end{proof}

The lengthy argument for the last case amounts to verifying that a
square embedded within $D_4$ cannot be infinitesimally deformed.  Note
that this cannot simply be settled using \fullref{lemma:reducedim},
although the square is indeed jammed within its span, because the
minimal distance in the square differs from that in $D_4$. If that
argument worked, it would also prove infinitesimal jamming for $D_3$,
which is not true.  (The $A_3$ and $D_3$ root lattices are isomorphic
to the face-centered cubic lattice, whose kissing configuration is not
jammed.)

\begin{corollary} \label{cor:rootjammed}
The $D_n$ root system (for $n \ge 4$) and the $E_6$, $E_7$ and $E_8$
root systems are infinitesimally jammed.
\end{corollary}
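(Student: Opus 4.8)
The plan is to reduce everything to \fullref{prop:D4jammed} and the fact, noted above, that every regular polygon --- in particular the hexagon $A_2$ --- is infinitesimally jammed, using the two embedding lemmas \fullref{lemma:reducedim} and \fullref{lemma:spanning}. In each case the argument has the same shape. Let $\mathcal{D}$ be the root system in question, let $y_1,\dots,y_N$ be an infinitesimal deformation of it, and let $x_i,x_j$ be any two of its minimal vectors. I will show that $x_i$ and $x_j$ lie together in a sub-root-system $\mathcal{C} \subseteq \mathcal{D}$ that is a copy of $A_2$ or $D_4$, consisting of minimal vectors of $\mathcal{D}$ and having the same minimal distance as $\mathcal{D}$. (The last condition is automatic, since all of these systems are simply-laced, so the minimal distance always corresponds to an angle of $\pi/3$.) By the result on regular polygons (if $\mathcal{C} \cong A_2$) or by \fullref{prop:D4jammed} (if $\mathcal{C} \cong D_4$), such a $\mathcal{C}$ is infinitesimally jammed within its span, so \fullref{lemma:reducedim} gives $\langle x_i, y_j \rangle + \langle x_j, y_i \rangle = 0$. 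Since this holds for every pair and the minimal vectors of $\mathcal{D}$ span the ambient space, \fullref{lemma:spanning} shows that $y_1,\dots,y_N$ is an infinitesimal rotation; that is, $\mathcal{D}$ is infinitesimally jammed.

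For $D_n$ with $n \ge 4$ the embedding is immediate from coordinate bookkeeping. A minimal vector of $D_n$ has the form $\pm e_i \pm e_j$ and is supported on two coordinates, so two minimal vectors together involve at most four coordinates. Enlarging this set of coordinates to a set $S$ of size exactly four --- which is possible because $n \ge 4$ --- the minimal vectors of $D_n$ supported on $S$ form a copy of $D_4$ that contains both $x_i$ and $x_j$, and \fullref{prop:D4jammed} applies to it. (When $n = 4$ this just recovers \fullref{prop:D4jammed} itself.)

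For $E_6$, $E_7$ and $E_8$, two non-proportional roots span a rank-$2$ sub-root-system, which in a simply-laced system must be of type $A_2$ (when the roots meet at an angle of $\pi/3$ or $2\pi/3$) or $A_1 \times A_1$ (when they are orthogonal). In the $A_2$ case the pair already lies in a copy of the hexagon and we are done. In the orthogonal case, and also in the degenerate case of an antipodal pair $\{\alpha,-\alpha\}$, it is enough to produce a full copy of the $D_4$ root system inside $E_6$, $E_7$, or $E_8$ that contains the pair. Each of these root systems contains a full $D_4$ --- for instance through the chains $D_4 \subset D_5 \subset E_6$, $D_4 \subset D_6 \subset E_7$, and $D_4 \subset D_8 \subset E_8$ --- and the Weyl group acts transitively on its roots and on its pairs of orthogonal roots. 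Hence any orthogonal pair of roots, and likewise any antipodal pair (a $D_4$ containing $\alpha$ automatically contains $-\alpha$), is carried by some element of the Weyl group into this fixed $D_4$, and so lies in a conjugate copy of $D_4$ inside the root system.

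The one step that goes beyond bookkeeping is the transitivity of the Weyl group of $E_n$ on pairs of orthogonal roots, equivalently the conjugacy of all its sub-root-systems of type $A_1 \times A_1$; this is a standard fact (it follows, for example, from $\alpha^{\perp} \cap E_6 = A_5$, $\alpha^{\perp} \cap E_7 = D_6$, and $\alpha^{\perp} \cap E_8 = E_7$, together with transitivity of the Weyl groups of $A_5$, $D_6$ and $E_7$ on their own roots), and it is the main thing to check. Anyone preferring to avoid it can instead handle the orthogonal pairs in each of $E_6$, $E_7$ and $E_8$ by an explicit coordinate computation, much as in the last case of the proof of \fullref{prop:D4jammed}.
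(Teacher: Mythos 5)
Your proof is correct and is essentially the paper's: both arguments reduce every pair of roots to a copy of $A_2$ or $D_4$ with the same minimal distance and then apply \fullref{lemma:reducedim} together with \fullref{prop:D4jammed} (and \fullref{lemma:spanning} to conclude). The only differences are cosmetic --- you justify the reduction for orthogonal pairs in $E_6$, $E_7$, $E_8$ via Weyl-group transitivity on orthogonal pairs where the paper simply cites distance transitivity of the automorphism group, and you handle $D_n$ by the direct observation that any two minimal vectors lie in a coordinate $D_4$, whereas the paper uses transitivity plus an explicit treatment of the two orbits of orthogonal pairs.
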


\begin{proof}
These configurations have norm $2$ and inner products $0$, $\pm1$ and
$\pm 2$, the same as in $D_4$.  We first deal with $E_6$, $E_7$ and
$E_8$.  Their automorphism groups act distance transitively, so it
suffices to consider a single pair of points at each distance. The
$D_4$ root system embeds in each of these configurations (in fact, its
Dynkin diagram is a subdiagram), so without loss of generality we can
assume the pair of points is in $D_4$.  Now combining
\fullref{lemma:reducedim} and \fullref{prop:D4jammed}
completes the proof.

The same proof works for $D_n$ with $n > 4$, with one exception, namely
that there are two orbits of pairs of orthogonal vectors, so the group
does not quite act distance transitively. Specifically, the stabilizer
of $(1,1,0,\dots,0)$ cannot interchange $(1,-1,0,\dots,0)$ and
$(0,0,1,1,0,\dots,0)$. However, in both cases, these vectors are
contained in a copy of $D_4$ (namely, the one in the first four
coordinates), so we can complete the proof as before.
\end{proof}

The $A_n$ root system is locally jammed, and for $n=2$ it is in fact
jammed, but the unjamming for $n=3$ extends to higher dimensions.

\begin{proposition} \label{prop:Anunjammed}
For $n \ge 3$, the $A_n$ root system is not jammed.
\end{proposition}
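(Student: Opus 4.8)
The plan is to write down one explicit unjamming of $A_3$ and then to graft it onto the larger systems $A_n$. Throughout I keep the normalization of this section: roots have norm $\sqrt2$, so a code has minimal angle at least $\pi/3$ exactly when all inner products between distinct points are at most $1$.

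For $A_3$ I would use $A_3\cong D_3$. With the standard scaling the $A_3$ root system is the cuboctahedron, whose twelve vertices are $(0,\pm1,\pm1)$, $(\pm1,0,\pm1)$, $(\pm1,\pm1,0)$ and whose minimal angle is $\pi/3$. I deform it through the family
\[
\mathcal{C}_s=\frac{\sqrt2}{\sqrt{1+s^2}}\bigl\{(0,\pm1,\pm s),\ (\pm s,0,\pm1),\ (\pm1,\pm s,0)\bigr\},\qquad 1\le s\le\frac{1+\sqrt5}{2},
\]
so that $\mathcal{C}_1$ is the cuboctahedron and $\mathcal{C}_{(1+\sqrt5)/2}$ is the icosahedron; the cyclic pattern of coordinates is exactly what makes the family have twelve points throughout and interpolate to the icosahedron. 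The one computation required is that the largest inner product between distinct points of the unnormalized set is $\max(s,\,s^2-1)$, which equals $s$ on the stated interval, so that the largest inner product of $\mathcal{C}_s$ itself is $2s/(1+s^2)$; since $2s\le1+s^2$, this is at most $1$, with equality only at $s=1$. Hence every $\mathcal{C}_s$ is a legitimate deformation of $A_3$, and for $s>1$ its minimal angle is strictly greater than $\pi/3$, so the pairwise distances do not all stay constant and the deformation is an unjamming. (At the far endpoint one recovers the classical deformation of the cuboctahedron to the icosahedron.)

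For $n\ge4$ I would fix four coordinates, say $1,2,3,4$: the twelve roots $e_i-e_j$ with $i,j\in\{1,2,3,4\}$ form a copy of $A_3\cong D_3$ spanning a $3$-dimensional subspace $W$, and I apply the deformation $\mathcal{C}_s$ inside $W$, with $s$ slightly larger than $1$, while keeping every other root of $A_n$ fixed. That this is still a deformation I would check case by case. Inner products between two moving roots are covered by the $A_3$ computation; inner products between two fixed roots are unchanged; and for a moving root $\tilde r$ (the image of $r=e_i-e_j$ with $i,j\in\{1,2,3,4\}$) against a fixed root $q$ one has $\langle\tilde r,q\rangle=\langle\tilde r,\operatorname{proj}_W q\rangle$, where a short calculation shows $\operatorname{proj}_W q$ is either $0$ (when neither endpoint of $q$ lies in $\{1,2,3,4\}$) or $\tfrac34$ times the centroid $c_F$ of one of the eight triangular faces $F$ of the cuboctahedron (when exactly one endpoint does). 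In the latter case $\langle\tilde r,q\rangle=\tfrac34\langle\tilde r,c_F\rangle$, and there are two subcases. If $r$ is a vertex of $F$, which is exactly when $r$ and $q$ were originally in contact, then working in the $D_3$ model and using the symmetry of the family $\mathcal{C}_s$ one finds $\langle\tilde r,c_F\rangle=\tfrac{2\sqrt2}{3}\cdot\tfrac{1+s}{\sqrt{1+s^2}}$, which is at most $\tfrac43$ by the very same inequality $2s\le1+s^2$. If $r\notin F$, then $\langle\tilde r,c_F\rangle$ is strictly below $\tfrac43$ already at $s=1$ and hence stays below it for $s$ near $1$. Either way $\langle\tilde r,q\rangle\le1$, so $\mathcal{C}_s$ extends to a deformation of all of $A_n$, and it is an unjamming because it changes distances inside the embedded $A_3$.

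The one delicate point---and the only place where Euclidean intuition might mislead---is this last check: that enlarging the embedded cuboctahedron does not crowd one of its vertices against an external neighbor. Here \fullref{lemma:reducedim} is of no use, since the embedded $A_3$ is not itself (infinitesimally) jammed and so transfers no rigidity; one really needs the explicit family $\mathcal{C}_s$. What makes the check succeed is the structural fact that a fixed external root feels the embedded $A_3$ only through a triangular-face centroid, together with the fact that this particular deformation does not increase the relevant vertex--centroid inner products. I would package both facts about $\mathcal{C}_s$ as a single short lemma, so that the elementary inequality $2s\le1+s^2$ does all of the work, both for $A_3$ and for its extension to $A_n$.
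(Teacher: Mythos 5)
Your proof is correct, but it takes a genuinely different route from the paper's. The paper stays in the $A_n$ coordinates $e_i-e_j$ throughout and perturbs along a perfect matching of the cuboctahedron's twelve vertices into orthogonal pairs $\{v,v'\}$ (non-overlapping diagonals of the square faces), sending $v\mapsto (v+\eps v')/\sqrt{1+\eps^2}$. The payoff of that particular choice of direction appears exactly at the step you flag as delicate: if $w$ is a root outside the embedded $A_3$ with $\langle v,w\rangle=1$, then $v$ and $v'$ have disjoint supports and $w$ meets the first four coordinates only inside the support of $v$, so $\langle v',w\rangle=0$ and the contact inner product simply drops to $1/\sqrt{1+\eps^2}$ --- the external check is a one-line support computation, with no projection needed. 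You instead run the classical cuboctahedron-to-icosahedron flex $\mathcal{C}_s$ in the $D_3$ model and pay for the external check with the projection-onto-face-centroid analysis; in exchange you get a cleaner global picture (an explicit path with the minimal angle strictly increasing inside the $A_3$ block, reaching the icosahedron) and a single inequality $2s\le 1+s^2$ governing both internal and external contacts. Your computations check out: $\operatorname{proj}_W(e_k-e_l)$ with $k\in\{1,2,3,4\}$ and $l\notin\{1,2,3,4\}$ is indeed $\tfrac34$ of the centroid of the triangular face $\{e_k-e_j: j\in\{1,2,3,4\}\setminus\{k\}\}$, the identification of ``$r$ a vertex of $F$'' with ``$r$ and $q$ originally in contact'' is right, and the formula $\langle\tilde r,c_F\rangle=\tfrac{2\sqrt2}{3}(1+s)/\sqrt{1+s^2}$ together with the equivalence of the bound $\tfrac43$ with $(1-s)^2\ge 0$ is exact. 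The one limitation is that your non-contact case ($r\notin F$) is settled only by continuity near $s=1$, so the grafted deformation of $A_n$ is asserted only on a short initial segment of the family --- which is all an unjamming requires, and you say so; the paper's version has the same small-parameter character.
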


\pagebreak

\begin{proof}
We will demonstrate an explicit unjamming.  We realize the $A_n$
lattice as the set of integral vectors in the subspace $\{(x_0, \dots,
x_n) : \sum_i x_i = 0 \}$ of $\R^{n+1}$, so each vector in the $A_n$
root system has one $1$ and one $-1$ among its coordinates.

We begin with $A_3$, whose twelve vertices form a cuboctahedron with
six square facets and eight triangular facets, and we choose a perfect
matching using non-overlapping diagonals of the squares.  In the left
half of the diagram below, we have labeled matched vertices with the
same label (from $1$ to $6$).  Each matched pair $v$ and $v'$ satisfies
$\langle v,v' \rangle = 0$, so we can perturb $v$ to $(v+\eps
v')/\sqrt{1+\eps^2}$ and $v'$ to $(v'+\eps v)/\sqrt{1+\eps^2}$,\break
as shown on the right.

\begin{center}
\begin{tikzpicture}[scale=2.2,>=stealth]
\fill[black!35] (1, 0.57735027) circle (0.03);
\fill[black!35] (0, 0.57735027) circle (0.03);
\fill[black!35] (0.5, -0.28867513) circle (0.03);
\draw[black!35] (1, 0.57735027) -- (0, 0.57735027) -- (0.5, -0.28867513) -- (1, 0.57735027);
\draw[black!35] (1, 0.57735027) -- (1.5, 0.28867513);
\draw[black!35] (1, 0.57735027) -- (1, 1.1547005);
\draw[black!35] (0, 0.57735027) -- (0, 1.1547005);
\draw[black!35] (0, 0.57735027) -- (-0.5, 0.28867513);
\draw[black!35] (0.5, -0.28867513) -- (0, -0.57735027);
\draw[black!35] (0.5, -0.28867513) -- (1, -0.57735027);
\draw (1, 0) -- (1.5, 0.28867513);
\draw (0.5, 0.86602540) -- (1, 1.1547005);
\draw (0.5, 0.86602540) -- (0, 1.1547005);
\draw (0, 0) -- (-0.5, 0.28867513);
\draw (0, 0) -- (0, -0.57735027);
\draw (1, 0) -- (1, -0.57735027);
\draw (1.5, 0.28867513) -- (1, 1.1547005) -- (0, 1.1547005) -- (-0.5, 0.28867513) -- (0, -0.57735027) -- (1, -0.57735027) -- (1.5, 0.28867513);
\draw (0, 0) -- (1, 0) -- (0.5, 0.86602540) -- (0,0);
\fill[black!0] (0, 0) circle (0.095);
\fill[black!0] (1, 0) circle (0.095);
\fill[black!0] (0.5, 0.86602540) circle (0.095);
\draw (0, 0) circle (0.095);
\draw (1, 0) circle (0.095);
\draw (0.5, 0.86602540) circle (0.095);
\draw (0, 0) node {$1$};
\draw (1, 0) node {$3$};
\draw (0.5, 0.86602540) node {$2$};
\fill[black!0] (1, 0.57735027) circle (0.095);
\fill[black!0] (0, 0.57735027) circle (0.095);
\fill[black!0] (0.5, -0.28867513) circle (0.095);
\draw[black!35] (1, 0.57735027) circle (0.095);
\draw[black!35] (0, 0.57735027) circle (0.095);
\draw[black!35] (0.5, -0.28867513) circle (0.095);
\draw[black!35] (1, 0.57735027) node {$6$};
\draw[black!35] (0, 0.57735027) node {$5$};
\draw[black!35] (0.5, -0.28867513) node {$4$};
\fill[black!0] (1.5, 0.28867513) circle (0.095);
\fill[black!0] (1, 1.1547005) circle (0.095);
\fill[black!0] (0, 1.1547005) circle (0.095);
\fill[black!0] (-0.5, 0.28867513) circle (0.095);
\fill[black!0] (0, -0.57735027) circle (0.095);
\fill[black!0] (1, -0.57735027) circle (0.095);
\draw (1.5, 0.28867513) circle (0.095);
\draw (1, 1.1547005) circle (0.095);
\draw (0, 1.1547005) circle (0.095);
\draw (-0.5, 0.28867513) circle (0.095);
\draw (0, -0.57735027) circle (0.095);
\draw (1, -0.57735027) circle (0.095);
\draw (1.5, 0.28867513) node {$2$};
\draw (1, 1.1547005) node {$5$};
\draw (0, 1.1547005) node {$1$};
\draw (-0.5, 0.28867513) node {$4$};
\draw (0, -0.57735027) node {$3$};
\draw (1, -0.57735027) node {$6$};
\end{tikzpicture}
\qquad \qquad
\begin{tikzpicture}[scale=2.2,>=stealth]
\fill[black!0] (1.5, 0.28867513) circle (0.095);
\fill[black!0] (1, 1.1547005) circle (0.095);
\fill[black!0] (0, 1.1547005) circle (0.095);
\fill[black!0] (-0.5, 0.28867513) circle (0.095);
\fill[black!0] (0, -0.57735027) circle (0.095);
\fill[black!0] (1, -0.57735027) circle (0.095);
\fill[black!35] (1, 0.57735027) circle (0.03);
\fill[black!35] (0, 0.57735027) circle (0.03);
\fill[black!35] (0.5, -0.28867513) circle (0.03);
\draw[black!35] (1, 0.57735027) -- (0, 0.57735027) -- (0.5, -0.28867513) -- (1, 0.57735027);
\draw[black!35] (1, 0.57735027) -- (1.5, 0.28867513);
\draw[black!35] (1, 0.57735027) -- (1, 1.1547005);
\draw[black!35] (0, 0.57735027) -- (0, 1.1547005);
\draw[black!35] (0, 0.57735027) -- (-0.5, 0.28867513);
\draw[black!35] (0.5, -0.28867513) -- (0, -0.57735027);
\draw[black!35] (0.5, -0.28867513) -- (1, -0.57735027);
\draw (1, 0) -- (1.5, 0.28867513);
\draw (0.5, 0.86602540) -- (1, 1.1547005);
\draw (0.5, 0.86602540) -- (0, 1.1547005);
\draw (0, 0) -- (-0.5, 0.28867513);
\draw (0, 0) -- (0, -0.57735027);
\draw (1, 0) -- (1, -0.57735027);
\fill (1.5, 0.28867513) circle (0.035);
\fill (1, 1.1547005) circle (0.035);
\fill (0, 1.1547005) circle (0.035);
\fill (-0.5, 0.28867513) circle (0.035);
\fill (0, -0.57735027) circle (0.035);
\fill (1, -0.57735027) circle (0.035);
\draw (1.5, 0.28867513) -- (1, 1.1547005) -- (0, 1.1547005) -- (-0.5, 0.28867513) -- (0, -0.57735027) -- (1, -0.57735027) -- (1.5, 0.28867513);
\fill (0, 0) circle (0.04);
\fill (1, 0) circle (0.04);
\fill (0.5, 0.86602540) circle (0.04);
\draw (0, 0) -- (1, 0) -- (0.5, 0.86602540) -- (0,0);
\draw[->, very thick, dotted] (0, 0) -- (-0.13245553, 0.28867513);
\draw[->, very thick, dotted] (1.0000000, 0) -- (0.81622777, -0.25904742);
\draw[->, very thick, dotted] (0.50000000, 0.86602540) -- (0.81622777, 0.83639769);
\draw[->, very thick, dotted] (1.5000000, 0.28867513) -- (1.4486833, 0.47124932);
\draw[->, very thick, dotted] (0, -0.57735027) -- (0.18377223, -0.62419579);
\draw[->, very thick, dotted] (0, 1.1547005) -- (-0.13245553, 1.0189719);
\draw[->, very thick,black!35, dotted] (1.0000000, 0.57735027) -- (1.1324555, 0.28867513);
\draw[->, very thick,black!35, dotted] (0, 0.57735027) -- (0.18377223, 0.83639769);
\draw[->, very thick,black!35, dotted] (0.50000000, -0.28867513) -- (0.18377223, -0.25904742);
\draw[->, very thick, dotted] (-0.50000000, 0.28867513) -- (-0.44868330, 0.10610095);
\draw[->, very thick, dotted] (1.0000000, 1.1547005) -- (0.81622777, 1.2015461);
\draw[->, very thick, dotted] (1.0000000, -0.57735027) -- (1.1324555, -0.44162161);
\end{tikzpicture}
\end{center}

To see that this is an actual unjamming, we must check what happens to
vertices $v$ and $w$ satisfying $\langle v,w \rangle = 1$.  The matched
vertices satisfy $\langle v',w' \rangle = -1$ and $\langle v',w \rangle
= - \langle v,w' \rangle$, and hence
$$
\frac{\langle v + \eps v', w + \eps w' \rangle}{1+\eps^2} =
\frac{1-\eps^2}{1+\eps^2} < 1,
$$
as desired.

For $A_n$ with $n > 3$, we simply perform this unjamming on the copy of
$A_3$ in the first four coordinates, while leaving all the other points
unchanged. All we need to check is\break
that the minimal distances between the moving points and the
unchanged points do not\break
decrease.  Suppose $v$ is in $A_3$, paired with $v'$ as above,
while $w$ is outside $A_3$ and satisfies $\langle v,w \rangle =
1$. Then $v$ and $w$ have a single nonzero coordinate in which
they agree, and the other nonzero coordinate of $w$ is not
among the first four coordinates.  Because $\langle v,v'
\rangle = 0$, the vectors $v$ and $v'$ have disjoint supports,
from which it follows that $\langle v',w \rangle=0$. Thus,
$$
\frac{\langle v + \eps v',w \rangle}{\sqrt{1+\eps^2}} =
\frac{1}{\sqrt{1+\eps^2}} < 1,
$$
so we have a genuine unjamming of the $A_n$ root system.
\end{proof}

Note that the first-order changes in the inner products of $1$ vanish,
as they must according to \fullref{lemma:reducedim} (because each
pair of vectors with inner product $1$ spans a hexagon); instead, there
is a second-order improvement.

\vspace{-0.3cm} 
\section{Kissing configurations in five, six and seven dimensions}
\label{section:kisslist} \vspace{-0.2cm} 

\enlargethispage{0.35cm} 

The root systems analyzed in the previous section are
conjectured to be optimal kissing configurations up through
eight dimensions.  In fact, $E_8$ is known to be the unique
optimal kissing configuration in $\R^8$ (optimality was proved
by Leven{\v{s}}te{\u\i}n~\cite{L} and\break
by Odlyzko and Sloane~\cite{OS}, while uniqueness was proved by
Bannai and Sloane~\cite{BS}).  However, in five, six and seven
dimensions, the $D_5$, $E_6$ and $E_7$ root systems are not the
unique optimal kissing configurations. This was first observed
by Leech~\cite{Lee1,Lee2}.

In this section, we enumerate what we believe may be a complete list of
optimal kissing configurations in these dimensions, up to isometry.  We
find two configurations in $\R^5$, four in $\R^6$, and four in $\R^7$.
Those in $\R^5$ and $\R^6$, as well as two of the ones in $\R^7$,\break
are all kissing configurations of previously known sphere
packings. However, two of the configurations in $\R^7$ do not
arise naturally from densest packings in $\R^7$ and appear to
be new.

We have verified by rigorous computation calculations that all these
configurations are infinitesimally jammed.  (It would very likely be
possible to check this by hand, but it\break
does not seem worth the effort.) To do so, we used version~2.6
of the \texttt{QSopt\_ex} rational\break
LP solver~\cite{ACDE} to get exact solutions to the linear
programs. Specifically, for each pair of points, we used the
software to verify that their inner product can neither
increase nor decrease in any infinitesimal deformation.  Of
course, it would be far more efficient to take into account the
orbits of the automorphism group on pairs of points, but it was
easier to let the computer check many cases than to compute and
keep track of the orbits. The one tricky part of the
calculation is that the coordinates of the points are not
always rational numbers (and using floating-point arithmetic
would make the calculations unrigorous). Fortunately, there is
an easy fix: by rescaling certain coordinates, in each case we
can use rational coordinates and carry out all the calculations
with respect to a nonstandard inner product defined by a
rational matrix. In fact, we can take the matrix to be
diagonal.  In the supplementary information for this paper (see
\fullref{appendix:data}), we provide explicit rational
coordinates for all of the configurations studied in this
section, together with the corresponding inner product
matrices.\nopagebreak

\vspace{-0.125cm} 
\subsection{Methods}
\vspace{-0.075cm} 

Conway and Sloane~\cite{Allthebest} describe a method that
conjecturally generates all the best sphere packings in low
dimensions.
For dimensions from five to eight, it works as\pagebreak{}\break 
follows. We start with the checkerboard lattice
$$
D_4 = \bigg\{x \in \Z^4 : \sum_{i=1}^4 x_i \equiv 0 \pmod 2\bigg\}.
$$
It leads to a sphere packing in $\R^4$ with balls of radius
$\sqrt{2}/2$, which is conjecturally the densest sphere packing in four
dimensions, as well as the unique densest periodic packing. The
discriminant group $D_4^*/D_4 \cong (\Z/2\Z)^2$ has four elements, the
zero class $a = (0,0,0,0)$ and three nonzero classes, which we
represent by the deep holes $b = (1,0,0,0)$, $c =
\left(1/2,1/2,1/2,1/2\right)$ and $d = \left(-1/2,1/2,1/2,1/2\right)$.
Conway and Sloane conjecture that the densest packings in dimension $n$
(with $5 \le n \le 8$) are\break
obtained by \emph{fibering} over $D_4$, that is, by positioning
translated copies of $D_4$ over the densest packings in
dimension $n-4$, scaled so that their minimal distance is $1$.
Each translation vector will be one of $a$, $b$, $c$ or $d$,
and adjacent points in the $(n{-}4)$--dimensional packing will
be assigned different translation vectors. This idea of
fibering is a generalization of the construction of laminated
lattices (see Conway and Sloane~\cite{Laminated}).  See also
Cohn and Kumar~\cite{Groundstates} for other calculations using
the methods of \cite{Allthebest}.

\subsection{Dimension $5$: $40$ points on $S^4$}

In dimension $5$, the densest packings are conjecturally obtained by
stacking layers\break
of $D_4$ on top of each other, that is, by arranging translated
copies of $D_4$ along the integers $\Z$.  At each point of
$\Z$, one has to make a choice of which translate of $D_4$ to
use. In other words, such a packing corresponds to a coloring
of $\Z$ by four colors or labels $a,b,c,d$, so that no two
adjacent integers have the same color.

For instance, the $D_5$ lattice is obtained from the following
coloring: 
\setlength{\unitlength}{1cm}
\begin{center}
\begin{tikzpicture}[scale=1.1]
\draw (0,0) node {$\ldots$};
\draw (6,0) node {$\ldots$};
\draw (1,0) circle (0.3);
\draw (2,0) circle (0.3);
\draw (3,0) circle (0.3);
\draw (4,0) circle (0.3);
\draw (5,0) circle (0.3);
\draw (1,0) node {{$a$}};
\draw (2,0) node {{$b$}};
\draw (3,0) node {{$a$}};
\draw (4,0) node {{$b$}};
\draw (5,0) node {{$a$}};
\end{tikzpicture}
\end{center}
Because the affine symmetry group of $D_4^*$ acts as the full symmetric
group on the four classes in $D_4^*/D_4$, it does not matter which two
distinct classes are used as labels here.

The best kissing configurations known in dimension $5$ are obtained
from the local versions of these packings.  The $D_5$ root system comes
from the following diagram: 
\begin{center}
\begin{tikzpicture}[scale=1.1]
\draw (1,0) circle (0.3);
\draw (2,0) circle (0.3);
\draw (3,0) circle (0.3);
\draw (1,0) node {$b_8$};
\draw (2,0) node {$a_{24}$};
\draw (3,0) node {$b_8$};
\draw (2,-0.75) node {$\sCl{5}{40a}$};
\end{tikzpicture}
\vspace{-0.5cm}
\end{center}
The subscripts mean that we take the $24$ minimal lattice vectors in
the central $D_4$ layer, and $8$ translated lattice vectors in each
adjacent layer (specifically, those surrounding a\break{} 
deep hole). We call this configuration $\sCl{5}{40a}$.

Of course, there is no need to choose the same translation vector on
both sides of the central layer, and we can form a competing
configuration $\sCl{5}{40b}$ as follows: 
\begin{center}
\begin{tikzpicture}[scale=1.1]
\draw (1,0) circle (0.3);
\draw (2,0) circle (0.3);
\draw (3,0) circle (0.3);
\draw (1,0) node {$b_8$};
\draw (2,0) node {$a_{24}$};
\draw (3,0) node {$c_8$};
\draw (2,-0.75) node {$\sCl{5}{40b}$};
\end{tikzpicture}
\vspace{-0.25cm} 
\end{center}
This is the kissing configuration of the other three uniform
$5$--dimensional packings described in \cite{Allthebest}. It is
genuinely different from $\sCl{5}{40a}$, because it lacks antipodal
symmetry. Its symmetry group has size $384$, compared to $3840$ for
$\sCl{5}{40a}$.

\subsection{Dimension $6$: $72$ points on $S^5$}

Next, we consider kissing configurations in six dimensions.  The best
packings known in\break
$\R^6$ are obtained by fibering over $D_4$ using a hexagonal
arrangement of translates in the plane. There are four uniform
packings, described concisely by the colors associated to the
hexagon of six translates around a central copy of $D_4$. For
more details we refer the reader to Conway and
Sloane~\cite{Allthebest}, and Cohn and
Kumar~\cite{Groundstates}.

These packings have four distinct kissing configurations, shown in
\fullref{fig:hex}, and a simple case analysis shows that every
possibility in this framework is equivalent to one of them (up to
symmetries of the hexagon and permutations of the labels). They are
genuinely different, as their automorphism groups have sizes $103680$,
$3840$, $2304$ and $384$, respectively.

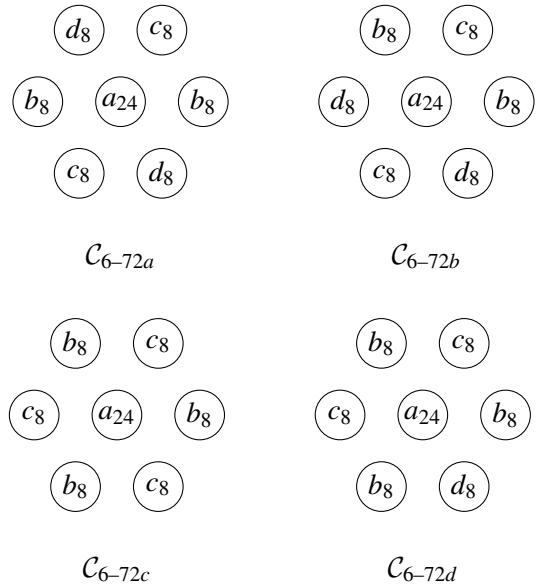
\begin{figure}[ht!]
\centering
\begin{tikzpicture}[scale=1.1]
\draw (1,0) circle (0.3);
\draw (2,0) circle (0.3);
\draw (3,0) circle (0.3);
\draw (1.5,0.866) circle (0.3);
\draw (2.5,0.866) circle (0.3);
\draw (1.5,-0.866) circle (0.3);
\draw (2.5,-0.866) circle (0.3);
\draw (1,0) node {$b_8$};
\draw (2,0) node {$a_{24}$};
\draw (3,0) node {$b_8$};
\draw (1.5,0.866) node {$d_8$};
\draw (2.5,0.866) node {$c_8$};
\draw (1.5,-0.866) node {$c_8$};
\draw (2.5,-0.866) node {$d_8$};
\draw (2,-1.866) node {$\sCl{6}{72a}$} ;
\end{tikzpicture} \hspace{1cm}
\begin{tikzpicture}[scale=1.1]
\draw (1,0) circle (0.3);
\draw (2,0) circle (0.3);
\draw (3,0) circle (0.3);
\draw (1.5,0.866) circle (0.3);
\draw (2.5,0.866) circle (0.3);
\draw (1.5,-0.866) circle (0.3);
\draw (2.5,-0.866) circle (0.3);
\draw (1,0) node {$d_8$};
\draw (2,0) node {$a_{24}$};
\draw (3,0) node {$b_8$};
\draw (1.5,0.866) node {$b_8$};
\draw (2.5,0.866) node {$c_8$};
\draw (1.5,-0.866) node {$c_8$};
\draw (2.5,-0.866) node {$d_8$};
\draw (2,-1.866) node {$\sCl{6}{72b}$} ;
\end{tikzpicture}
\vskip 0.5cm
\begin{tikzpicture}[scale=1.1]
\draw (1,0) circle (0.3);
\draw (2,0) circle (0.3);
\draw (3,0) circle (0.3);
\draw (1.5,0.866) circle (0.3);
\draw (2.5,0.866) circle (0.3);
\draw (1.5,-0.866) circle (0.3);
\draw (2.5,-0.866) circle (0.3);
\draw (1,0) node {$c_8$};
\draw (2,0) node {$a_{24}$};
\draw (3,0) node {$b_8$};
\draw (1.5,0.866) node {$b_8$};
\draw (2.5,0.866) node {$c_8$};
\draw (1.5,-0.866) node {$b_8$};
\draw (2.5,-0.866) node {$c_8$};
\draw (2,-1.866) node {$\sCl{6}{72c}$} ;
\end{tikzpicture} \hspace{1cm}
\begin{tikzpicture}[scale=1.1]
\draw (1,0) circle (0.3);
\draw (2,0) circle (0.3);
\draw (3,0) circle (0.3);
\draw (1.5,0.866) circle (0.3);
\draw (2.5,0.866) circle (0.3);
\draw (1.5,-0.866) circle (0.3);
\draw (2.5,-0.866) circle (0.3);
\draw (1,0) node {$c_8$};
\draw (2,0) node {$a_{24}$};
\draw (3,0) node {$b_8$};
\draw (1.5,0.866) node {$b_8$};
\draw (2.5,0.866) node {$c_8$};
\draw (1.5,-0.866) node {$b_8$};
\draw (2.5,-0.866) node {$d_8$};
\draw (2,-1.866) node {$\sCl{6}{72d}$} ;
\end{tikzpicture}
\vspace{-0.25cm} 
\caption{Kissing configurations in $\R^6$ based on the hexagonal lattice} \label{fig:hex}
\end{figure}

\subsection{Dimension $7$: $126$ points on $S^6$}

The best packings known in $\R^7$ are obtained by fibering over $D_4$
in an optimal $3$--dimensional arrangement, namely one of the Barlow
packings.  The Barlow packings are themselves obtained by stacking
translates of the hexagonal lattice.  Here, we will\break
be concerned with the kissing configurations, which involve
only three hexagonal layers, so although there are uncountably
many Barlow packings (corresponding to three-colorings of
$\Z$), we will only need to consider two configurations: the
hexagonal close-packing, in which the top and bottom layers are
mirror images of each other, and the face-centered cubic
packing, in which they are point reflections of each other.

First, we consider the face-centered cubic arrangement.  Without loss
of generality we label the central sphere at the origin $a$.  Once we
have specified the labels in the\pagebreak{} 
central hexagonal layer, the labels on the minimal vectors in
the adjacent layers will be completely determined (since they
will each already have three neighbors with different labels).
It is not hard to check that in the central layer, the hexagon
surrounding the central copy of $D_4$ must be colored either
$bcdbcd$ or $bcbdcd$, up to a permutation of the colors,
because no other choices will extend consistently to the
neighboring layers. This determines all the labels in the two
diagrams shown in \fullref{fig:fcc}, except for the six points
labeled $a_1$. For those points, one can check that if their
labels agree with the label assigned to the central sphere,
then they add one to the kissing number; otherwise they add
zero. Thus, they should all be labeled $a$ to get a kissing
number of $126$.

Accordingly, we get two local configurations of tight packings, as
shown in \fullref{fig:fcc}.  The code $\sCl{7}{126a}$ is the $E_7$ root
system, but $\sCl{7}{126b}$ is not the kissing configuration\break 
of any of the tight packings in $\R^7$ described by Conway and
Sloane~\cite{Allthebest}.

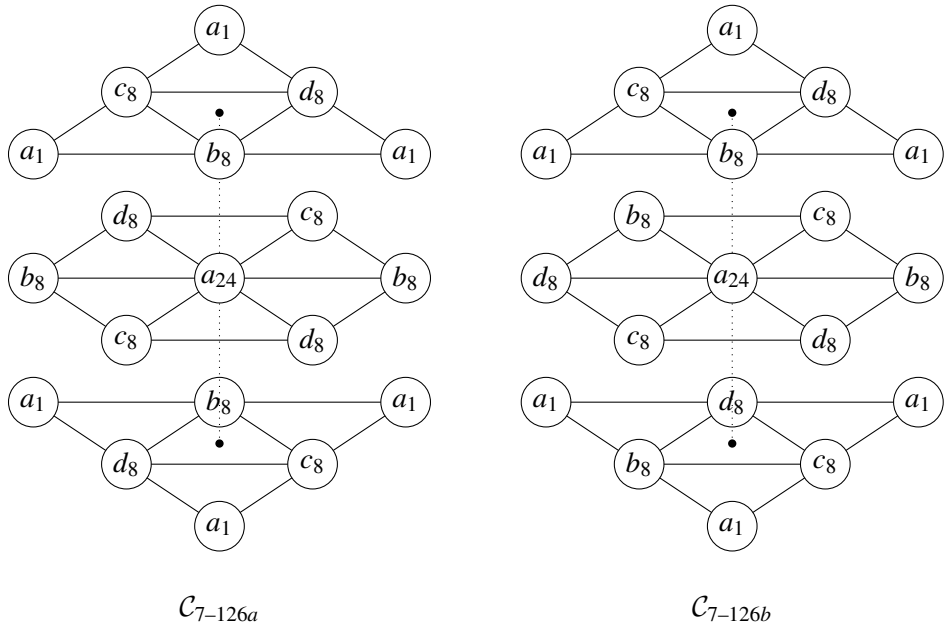
\begin{figure}[ht!]
\centering
\begin{tikzpicture}[scale=1.1]

\draw (-3*0.75,0.75*-2)--(0*0.75,0.75*-2);
\draw (-3*0.75,0.75*-2)--(-1.5*0.75,0.75*-3);
\draw (0*0.75,0.75*-2)--(-1.5*0.75,0.75*-3);
\draw (1.5*0.75,0.75*-3)--(-1.5*0.75,0.75*-3);
\draw (0*0.75,0.75*-4)--(-1.5*0.75,0.75*-3);
\draw (0*0.75,0.75*-4)--(1.5*0.75,0.75*-3);
\draw (3*0.75,0.75*-2)--(0*0.75,0.75*-2);
\draw (3*0.75,0.75*-2)--(1.5*0.75,0.75*-3);
\draw (0*0.75,0.75*-2)--(1.5*0.75,0.75*-3);

\fill [white] (-1.5*0.75,0.75*-3) circle (0.3);
\fill [white] (1.5*0.75,0.75*-3) circle (0.3);
\fill [white] (-3*0.75,0.75*-2) circle (0.3);
\fill [white] (0*0.75,0.75*-2) circle (0.3);
\fill [white] (3*0.75,0.75*-2) circle (0.3);
\fill [white] (0*0.75,0.75*-4) circle (0.3);

\draw (-1.5*0.75,0.75*-3) circle (0.3);
\draw (1.5*0.75,0.75*-3) circle (0.3);
\draw (-3*0.75,0.75*-2) circle (0.3);
\draw (0*0.75,0.75*-2) circle (0.3);
\draw (3*0.75,0.75*-2) circle (0.3);
\draw (0*0.75,0.75*-4) circle (0.3);

\draw (-1.5*0.75,0.75*-3) node {$d_8$};
\draw (1.5*0.75,0.75*-3) node {$c_8$};
\draw (-3*0.75,0.75*-2) node {$a_1$};
\draw (0*0.75,0.75*-2) node {$b_8$};
\draw (3*0.75,0.75*-2) node {$a_1$};
\draw (0*0.75,0.75*-4) node {$a_1$};

\draw[dotted] (0*0.75,0.75*0) -- (0*0.75,0.75*2.667);
\draw[dotted] (0*0.75,0.75*0) -- (0*0.75,0.75*-2.667);

\draw (0*0.75,0.75*0)--(-3*0.75,0.75*0);
\draw (0*0.75,0.75*0)--(3*0.75,0.75*0);
\draw (0*0.75,0.75*0)--(-1.5*0.75,0.75*1);
\draw (0*0.75,0.75*0)--(1.5*0.75,0.75*1);
\draw (0*0.75,0.75*0)--(-1.5*0.75,0.75*-1);
\draw (0*0.75,0.75*0)--(1.5*0.75,0.75*-1);
\draw (-3*0.75,0.75*0)--(-1.5*0.75,0.75*1);
\draw (-1.5*0.75,0.75*1)--(1.5*0.75,0.75*1);
\draw (1.5*0.75,0.75*1)--(3*0.75,0.75*0);
\draw (3*0.75,0.75*0)--(1.5*0.75,0.75*-1);
\draw (1.5*0.75,0.75*-1)--(-1.5*0.75,0.75*-1);
\draw (-1.5*0.75,0.75*-1)--(-3*0.75,0.75*0);

\fill [white] (-3*0.75,0.75*0) circle (0.3);
\fill [white] (0*0.75,0.75*0) circle (0.3);
\fill [white] (3*0.75,0.75*0) circle (0.3);
\fill [white] (-1.5*0.75,0.75*1) circle (0.3);
\fill [white] (1.5*0.75,0.75*1) circle (0.3);
\fill [white] (-1.5*0.75,0.75*-1) circle (0.3);
\fill [white] (1.5*0.75,0.75*-1) circle (0.3);

\draw (-3*0.75,0.75*0) circle (0.3);
\draw (0*0.75,0.75*0) circle (0.3);
\draw (3*0.75,0.75*0) circle (0.3);
\draw (-1.5*0.75,0.75*1) circle (0.3);
\draw (1.5*0.75,0.75*1) circle (0.3);
\draw (-1.5*0.75,0.75*-1) circle (0.3);
\draw (1.5*0.75,0.75*-1) circle (0.3);

\draw (-3*0.75,0.75*0) node {$b_8$};
\draw (0*0.75,0.75*0) node {$a_{24}$};
\draw (3*0.75,0.75*0) node {$b_8$};
\draw (-1.5*0.75,0.75*1) node {$d_8$};
\draw (1.5*0.75,0.75*1) node {$c_8$};
\draw (-1.5*0.75,0.75*-1) node {$c_8$};
\draw (1.5*0.75,0.75*-1) node {$d_8$};

\draw (-3*0.75,0.75*2)--(0*0.75,0.75*2);
\draw (-3*0.75,0.75*2)--(-1.5*0.75,0.75*3);
\draw (0*0.75,0.75*2)--(-1.5*0.75,0.75*3);
\draw (1.5*0.75,0.75*3)--(-1.5*0.75,0.75*3);
\draw (0*0.75,0.75*4)--(-1.5*0.75,0.75*3);
\draw (0*0.75,0.75*4)--(1.5*0.75,0.75*3);
\draw (3*0.75,0.75*2)--(0*0.75,0.75*2);
\draw (3*0.75,0.75*2)--(1.5*0.75,0.75*3);
\draw (0*0.75,0.75*2)--(1.5*0.75,0.75*3);

\fill [white] (-1.5*0.75,0.75*3) circle (0.3);
\fill [white] (1.5*0.75,0.75*3) circle (0.3);
\fill [white] (-3*0.75,0.75*2) circle (0.3);
\fill [white] (0*0.75,0.75*2) circle (0.3);
\fill [white] (3*0.75,0.75*2) circle (0.3);
\fill [white] (0*0.75,0.75*4) circle (0.3);

\draw (-1.5*0.75,0.75*3) circle (0.3);
\draw (1.5*0.75,0.75*3) circle (0.3);
\draw (-3*0.75,0.75*2) circle (0.3);
\draw (0*0.75,0.75*2) circle (0.3);
\draw (3*0.75,0.75*2) circle (0.3);
\draw (0*0.75,0.75*4) circle (0.3);

\draw (-1.5*0.75,0.75*3) node {$c_8$};
\draw (1.5*0.75,0.75*3) node {$d_8$};
\draw (-3*0.75,0.75*2) node {$a_1$};
\draw (0*0.75,0.75*2) node {$b_8$};
\draw (3*0.75,0.75*2) node {$a_1$};
\draw (0*0.75,0.75*4) node {$a_1$};

\fill[black] (0*0.75,0.75*2.667) circle (0.05);
\fill[black] (0*0.75,0.75*-2.667) circle (0.05);

\draw (0,-4) node {$\sCl{7}{126a}$};

\end{tikzpicture}
\hspace{1cm}
\begin{tikzpicture}[scale=1.1]

\draw (-3*0.75,0.75*-2)--(0*0.75,0.75*-2);
\draw (-3*0.75,0.75*-2)--(-1.5*0.75,0.75*-3);
\draw (0*0.75,0.75*-2)--(-1.5*0.75,0.75*-3);
\draw (1.5*0.75,0.75*-3)--(-1.5*0.75,0.75*-3);
\draw (0*0.75,0.75*-4)--(-1.5*0.75,0.75*-3);
\draw (0*0.75,0.75*-4)--(1.5*0.75,0.75*-3);
\draw (3*0.75,0.75*-2)--(0*0.75,0.75*-2);
\draw (3*0.75,0.75*-2)--(1.5*0.75,0.75*-3);
\draw (0*0.75,0.75*-2)--(1.5*0.75,0.75*-3);

\fill [white] (-1.5*0.75,0.75*-3) circle (0.3);
\fill [white] (1.5*0.75,0.75*-3) circle (0.3);
\fill [white] (-3*0.75,0.75*-2) circle (0.3);
\fill [white] (0*0.75,0.75*-2) circle (0.3);
\fill [white] (3*0.75,0.75*-2) circle (0.3);
\fill [white] (0*0.75,0.75*-4) circle (0.3);

\draw (-1.5*0.75,0.75*-3) circle (0.3);
\draw (1.5*0.75,0.75*-3) circle (0.3);
\draw (-3*0.75,0.75*-2) circle (0.3);
\draw (0*0.75,0.75*-2) circle (0.3);
\draw (3*0.75,0.75*-2) circle (0.3);
\draw (0*0.75,0.75*-4) circle (0.3);

\draw (-1.5*0.75,0.75*-3) node {$b_8$};
\draw (1.5*0.75,0.75*-3) node {$c_8$};
\draw (-3*0.75,0.75*-2) node {$a_1$};
\draw (0*0.75,0.75*-2) node {$d_8$};
\draw (3*0.75,0.75*-2) node {$a_1$};
\draw (0*0.75,0.75*-4) node {$a_1$};

\draw[dotted] (0*0.75,0.75*0) -- (0*0.75,0.75*2.667);
\draw[dotted] (0*0.75,0.75*0) -- (0*0.75,0.75*-2.667);

\draw (0*0.75,0.75*0)--(-3*0.75,0.75*0);
\draw (0*0.75,0.75*0)--(3*0.75,0.75*0);
\draw (0*0.75,0.75*0)--(-1.5*0.75,0.75*1);
\draw (0*0.75,0.75*0)--(1.5*0.75,0.75*1);
\draw (0*0.75,0.75*0)--(-1.5*0.75,0.75*-1);
\draw (0*0.75,0.75*0)--(1.5*0.75,0.75*-1);
\draw (-3*0.75,0.75*0)--(-1.5*0.75,0.75*1);
\draw (-1.5*0.75,0.75*1)--(1.5*0.75,0.75*1);
\draw (1.5*0.75,0.75*1)--(3*0.75,0.75*0);
\draw (3*0.75,0.75*0)--(1.5*0.75,0.75*-1);
\draw (1.5*0.75,0.75*-1)--(-1.5*0.75,0.75*-1);
\draw (-1.5*0.75,0.75*-1)--(-3*0.75,0.75*0);

\fill [white] (-3*0.75,0.75*0) circle (0.3);
\fill [white] (0*0.75,0.75*0) circle (0.3);
\fill [white] (3*0.75,0.75*0) circle (0.3);
\fill [white] (-1.5*0.75,0.75*1) circle (0.3);
\fill [white] (1.5*0.75,0.75*1) circle (0.3);
\fill [white] (-1.5*0.75,0.75*-1) circle (0.3);
\fill [white] (1.5*0.75,0.75*-1) circle (0.3);

\draw (-3*0.75,0.75*0) circle (0.3);
\draw (0*0.75,0.75*0) circle (0.3);
\draw (3*0.75,0.75*0) circle (0.3);
\draw (-1.5*0.75,0.75*1) circle (0.3);
\draw (1.5*0.75,0.75*1) circle (0.3);
\draw (-1.5*0.75,0.75*-1) circle (0.3);
\draw (1.5*0.75,0.75*-1) circle (0.3);

\draw (-3*0.75,0.75*0) node {$d_8$};
\draw (0*0.75,0.75*0) node {$a_{24}$};
\draw (3*0.75,0.75*0) node {$b_8$};
\draw (-1.5*0.75,0.75*1) node {$b_8$};
\draw (1.5*0.75,0.75*1) node {$c_8$};
\draw (-1.5*0.75,0.75*-1) node {$c_8$};
\draw (1.5*0.75,0.75*-1) node {$d_8$};

\draw (-3*0.75,0.75*2)--(0*0.75,0.75*2);
\draw (-3*0.75,0.75*2)--(-1.5*0.75,0.75*3);
\draw (0*0.75,0.75*2)--(-1.5*0.75,0.75*3);
\draw (1.5*0.75,0.75*3)--(-1.5*0.75,0.75*3);
\draw (0*0.75,0.75*4)--(-1.5*0.75,0.75*3);
\draw (0*0.75,0.75*4)--(1.5*0.75,0.75*3);
\draw (3*0.75,0.75*2)--(0*0.75,0.75*2);
\draw (3*0.75,0.75*2)--(1.5*0.75,0.75*3);
\draw (0*0.75,0.75*2)--(1.5*0.75,0.75*3);

\fill [white] (-1.5*0.75,0.75*3) circle (0.3);
\fill [white] (1.5*0.75,0.75*3) circle (0.3);
\fill [white] (-3*0.75,0.75*2) circle (0.3);
\fill [white] (0*0.75,0.75*2) circle (0.3);
\fill [white] (3*0.75,0.75*2) circle (0.3);
\fill [white] (0*0.75,0.75*4) circle (0.3);

\draw (-1.5*0.75,0.75*3) circle (0.3);
\draw (1.5*0.75,0.75*3) circle (0.3);
\draw (-3*0.75,0.75*2) circle (0.3);
\draw (0*0.75,0.75*2) circle (0.3);
\draw (3*0.75,0.75*2) circle (0.3);
\draw (0*0.75,0.75*4) circle (0.3);

\draw (-1.5*0.75,0.75*3) node {$c_8$};
\draw (1.5*0.75,0.75*3) node {$d_8$};
\draw (-3*0.75,0.75*2) node {$a_1$};
\draw (0*0.75,0.75*2) node {$b_8$};
\draw (3*0.75,0.75*2) node {$a_1$};
\draw (0*0.75,0.75*4) node {$a_1$};

\fill[black] (0*0.75,0.75*2.667) circle (0.05);
\fill[black] (0*0.75,0.75*-2.667) circle (0.05);

\draw (0,-4) node {$\sCl{7}{126b}$};
\end{tikzpicture}
\vspace{-0.25cm} 
\caption{Kissing configurations in $\R^7$
based on the face-centered cubic packing} \label{fig:fcc}
\end{figure}

The two other conjecturally optimal kissing arrangements in $S^6$ come
from the hexagonal close-packing via a similar argument and are shown
in \fullref{fig:hcp}.  The code $\sCl{7}{126c}$ is the kissing
configuration of the packing $\Lambda_7^2$ described in
\cite{Allthebest}, while $\sCl{7}{126d}$ does\break
not occur in any of the tight packings analyzed in that paper.

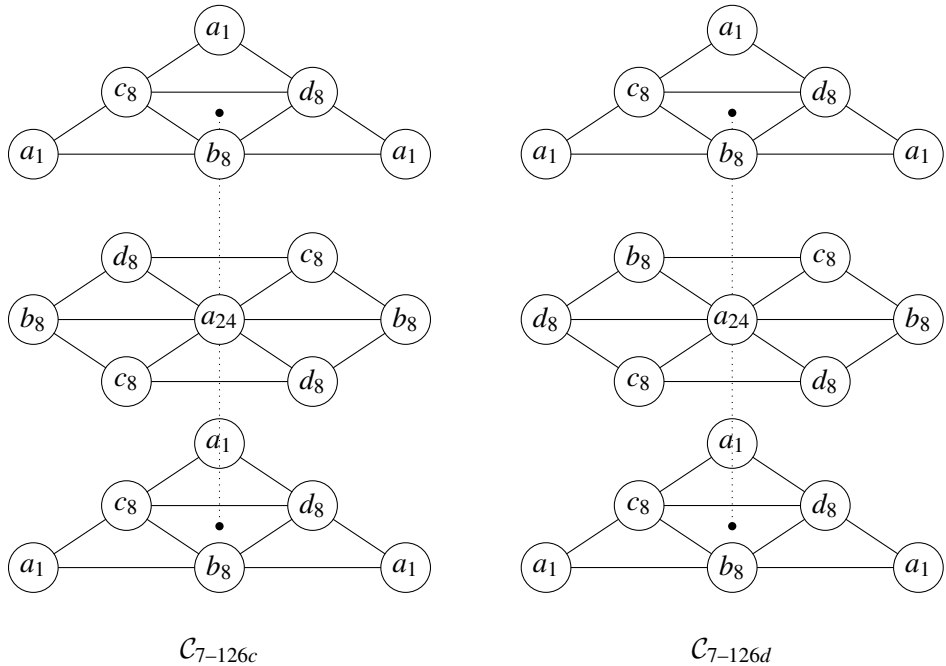
\begin{figure}[ht!]
\centering
\begin{tikzpicture}[scale=1.1]

\draw (-3*0.75,0.75*-4)--(0*0.75,0.75*-4);
\draw (-3*0.75,0.75*-4)--(-1.5*0.75,0.75*-3);
\draw (0*0.75,0.75*-4)--(-1.5*0.75,0.75*-3);
\draw (1.5*0.75,0.75*-3)--(-1.5*0.75,0.75*-3);
\draw (0*0.75,0.75*-2)--(-1.5*0.75,0.75*-3);
\draw (0*0.75,0.75*-2)--(1.5*0.75,0.75*-3);
\draw (3*0.75,0.75*-4)--(0*0.75,0.75*-4);
\draw (3*0.75,0.75*-4)--(1.5*0.75,0.75*-3);
\draw (0*0.75,0.75*-4)--(1.5*0.75,0.75*-3);

\fill [white] (-1.5*0.75,0.75*-3) circle (0.3);
\fill [white] (1.5*0.75,0.75*-3) circle (0.3);
\fill [white] (-3*0.75,0.75*-4) circle (0.3);
\fill [white] (0*0.75,0.75*-4) circle (0.3);
\fill [white] (3*0.75,0.75*-4) circle (0.3);
\fill [white] (0*0.75,0.75*-2) circle (0.3);

\draw (-1.5*0.75,0.75*-3) circle (0.3);
\draw (1.5*0.75,0.75*-3) circle (0.3);
\draw (-3*0.75,0.75*-4) circle (0.3);
\draw (0*0.75,0.75*-4) circle (0.3);
\draw (3*0.75,0.75*-4) circle (0.3);
\draw (0*0.75,0.75*-2) circle (0.3);

\draw (-1.5*0.75,0.75*-3) node {$c_8$};
\draw (1.5*0.75,0.75*-3) node {$d_8$};
\draw (-3*0.75,0.75*-4) node {$a_1$};
\draw (0*0.75,0.75*-4) node {$b_8$};
\draw (3*0.75,0.75*-4) node {$a_1$};
\draw (0*0.75,0.75*-2) node {$a_1$};

\draw[dotted] (0*0.75,0.75*0) -- (0*0.75,0.75*3.333);
\draw[dotted] (0*0.75,0.75*0) -- (0*0.75,0.75*-3.333);

\draw (0*0.75,0.75*0)--(-3*0.75,0.75*0);
\draw (0*0.75,0.75*0)--(3*0.75,0.75*0);
\draw (0*0.75,0.75*0)--(-1.5*0.75,0.75*1);
\draw (0*0.75,0.75*0)--(1.5*0.75,0.75*1);
\draw (0*0.75,0.75*0)--(-1.5*0.75,0.75*-1);
\draw (0*0.75,0.75*0)--(1.5*0.75,0.75*-1);
\draw (-3*0.75,0.75*0)--(-1.5*0.75,0.75*1);
\draw (-1.5*0.75,0.75*1)--(1.5*0.75,0.75*1);
\draw (1.5*0.75,0.75*1)--(3*0.75,0.75*0);
\draw (3*0.75,0.75*0)--(1.5*0.75,0.75*-1);
\draw (1.5*0.75,0.75*-1)--(-1.5*0.75,0.75*-1);
\draw (-1.5*0.75,0.75*-1)--(-3*0.75,0.75*0);

\fill [white] (-3*0.75,0.75*0) circle (0.3);
\fill [white] (0*0.75,0.75*0) circle (0.3);
\fill [white] (3*0.75,0.75*0) circle (0.3);
\fill [white] (-1.5*0.75,0.75*1) circle (0.3);
\fill [white] (1.5*0.75,0.75*1) circle (0.3);
\fill [white] (-1.5*0.75,0.75*-1) circle (0.3);
\fill [white] (1.5*0.75,0.75*-1) circle (0.3);

\draw (-3*0.75,0.75*0) circle (0.3);
\draw (0*0.75,0.75*0) circle (0.3);
\draw (3*0.75,0.75*0) circle (0.3);
\draw (-1.5*0.75,0.75*1) circle (0.3);
\draw (1.5*0.75,0.75*1) circle (0.3);
\draw (-1.5*0.75,0.75*-1) circle (0.3);
\draw (1.5*0.75,0.75*-1) circle (0.3);

\draw (-3*0.75,0.75*0) node {$b_8$};
\draw (0*0.75,0.75*0) node {$a_{24}$};
\draw (3*0.75,0.75*0) node {$b_8$};
\draw (-1.5*0.75,0.75*1) node {$d_8$};
\draw (1.5*0.75,0.75*1) node {$c_8$};
\draw (-1.5*0.75,0.75*-1) node {$c_8$};
\draw (1.5*0.75,0.75*-1) node {$d_8$};

\draw (-3*0.75,0.75*2.667)--(0*0.75,0.75*2.667);
\draw (-3*0.75,0.75*2.667)--(-1.5*0.75,0.75*3.667);
\draw (0*0.75,0.75*2.667)--(-1.5*0.75,0.75*3.667);
\draw (1.5*0.75,0.75*3.667)--(-1.5*0.75,0.75*3.667);
\draw (0*0.75,0.75*4.667)--(-1.5*0.75,0.75*3.667);
\draw (0*0.75,0.75*4.667)--(1.5*0.75,0.75*3.667);
\draw (3*0.75,0.75*2.667)--(0*0.75,0.75*2.667);
\draw (3*0.75,0.75*2.667)--(1.5*0.75,0.75*3.667);
\draw (0*0.75,0.75*2.667)--(1.5*0.75,0.75*3.667);

\fill [white] (-1.5*0.75,0.75*3.667) circle (0.3);
\fill [white] (1.5*0.75,0.75*3.667) circle (0.3);
\fill [white] (-3*0.75,0.75*2.667) circle (0.3);
\fill [white] (0*0.75,0.75*2.667) circle (0.3);
\fill [white] (3*0.75,0.75*2.667) circle (0.3);
\fill [white] (0*0.75,0.75*4.667) circle (0.3);

\draw (-1.5*0.75,0.75*3.667) circle (0.3);
\draw (1.5*0.75,0.75*3.667) circle (0.3);
\draw (-3*0.75,0.75*2.667) circle (0.3);
\draw (0*0.75,0.75*2.667) circle (0.3);
\draw (3*0.75,0.75*2.667) circle (0.3);
\draw (0*0.75,0.75*4.667) circle (0.3);

\draw (-1.5*0.75,0.75*3.667) node {$c_8$};
\draw (1.5*0.75,0.75*3.667) node {$d_8$};
\draw (-3*0.75,0.75*2.667) node {$a_1$};
\draw (0*0.75,0.75*2.667) node {$b_8$};
\draw (3*0.75,0.75*2.667) node {$a_1$};
\draw (0*0.75,0.75*4.667) node {$a_1$};

\fill[black] (0*0.75,0.75*3.333) circle (0.05);
\fill[black] (0*0.75,0.75*-3.333) circle (0.05);

\draw (0,-4) node {$\sCl{7}{126c}$};

\end{tikzpicture}
\hspace{1cm}
\begin{tikzpicture}[scale=1.1]

\draw (-3*0.75,0.75*-4)--(0*0.75,0.75*-4);
\draw (-3*0.75,0.75*-4)--(-1.5*0.75,0.75*-3);
\draw (0*0.75,0.75*-4)--(-1.5*0.75,0.75*-3);
\draw (1.5*0.75,0.75*-3)--(-1.5*0.75,0.75*-3);
\draw (0*0.75,0.75*-2)--(-1.5*0.75,0.75*-3);
\draw (0*0.75,0.75*-2)--(1.5*0.75,0.75*-3);
\draw (3*0.75,0.75*-4)--(0*0.75,0.75*-4);
\draw (3*0.75,0.75*-4)--(1.5*0.75,0.75*-3);
\draw (0*0.75,0.75*-4)--(1.5*0.75,0.75*-3);

\fill [white] (-1.5*0.75,0.75*-3) circle (0.3);
\fill [white] (1.5*0.75,0.75*-3) circle (0.3);
\fill [white] (-3*0.75,0.75*-4) circle (0.3);
\fill [white] (0*0.75,0.75*-4) circle (0.3);
\fill [white] (3*0.75,0.75*-4) circle (0.3);
\fill [white] (0*0.75,0.75*-2) circle (0.3);

\draw (-1.5*0.75,0.75*-3) circle (0.3);
\draw (1.5*0.75,0.75*-3) circle (0.3);
\draw (-3*0.75,0.75*-4) circle (0.3);
\draw (0*0.75,0.75*-4) circle (0.3);
\draw (3*0.75,0.75*-4) circle (0.3);
\draw (0*0.75,0.75*-2) circle (0.3);

\draw (-1.5*0.75,0.75*-3) node {$c_8$};
\draw (1.5*0.75,0.75*-3) node {$d_8$};
\draw (-3*0.75,0.75*-4) node {$a_1$};
\draw (0*0.75,0.75*-4) node {$b_8$};
\draw (3*0.75,0.75*-4) node {$a_1$};
\draw (0*0.75,0.75*-2) node {$a_1$};

\draw[dotted] (0*0.75,0.75*0) -- (0*0.75,0.75*3.333);
\draw[dotted] (0*0.75,0.75*0) -- (0*0.75,0.75*-3.333);

\draw (0*0.75,0.75*0)--(-3*0.75,0.75*0);
\draw (0*0.75,0.75*0)--(3*0.75,0.75*0);
\draw (0*0.75,0.75*0)--(-1.5*0.75,0.75*1);
\draw (0*0.75,0.75*0)--(1.5*0.75,0.75*1);
\draw (0*0.75,0.75*0)--(-1.5*0.75,0.75*-1);
\draw (0*0.75,0.75*0)--(1.5*0.75,0.75*-1);
\draw (-3*0.75,0.75*0)--(-1.5*0.75,0.75*1);
\draw (-1.5*0.75,0.75*1)--(1.5*0.75,0.75*1);
\draw (1.5*0.75,0.75*1)--(3*0.75,0.75*0);
\draw (3*0.75,0.75*0)--(1.5*0.75,0.75*-1);
\draw (1.5*0.75,0.75*-1)--(-1.5*0.75,0.75*-1);
\draw (-1.5*0.75,0.75*-1)--(-3*0.75,0.75*0);

\fill [white] (-3*0.75,0.75*0) circle (0.3);
\fill [white] (0*0.75,0.75*0) circle (0.3);
\fill [white] (3*0.75,0.75*0) circle (0.3);
\fill [white] (-1.5*0.75,0.75*1) circle (0.3);
\fill [white] (1.5*0.75,0.75*1) circle (0.3);
\fill [white] (-1.5*0.75,0.75*-1) circle (0.3);
\fill [white] (1.5*0.75,0.75*-1) circle (0.3);

\draw (-3*0.75,0.75*0) circle (0.3);
\draw (0*0.75,0.75*0) circle (0.3);
\draw (3*0.75,0.75*0) circle (0.3);
\draw (-1.5*0.75,0.75*1) circle (0.3);
\draw (1.5*0.75,0.75*1) circle (0.3);
\draw (-1.5*0.75,0.75*-1) circle (0.3);
\draw (1.5*0.75,0.75*-1) circle (0.3);

\draw (-3*0.75,0.75*0) node {$d_8$};
\draw (0*0.75,0.75*0) node {$a_{24}$};
\draw (3*0.75,0.75*0) node {$b_8$};
\draw (-1.5*0.75,0.75*1) node {$b_8$};
\draw (1.5*0.75,0.75*1) node {$c_8$};
\draw (-1.5*0.75,0.75*-1) node {$c_8$};
\draw (1.5*0.75,0.75*-1) node {$d_8$};

\draw (-3*0.75,0.75*2.667)--(0*0.75,0.75*2.667);
\draw (-3*0.75,0.75*2.667)--(-1.5*0.75,0.75*3.667);
\draw (0*0.75,0.75*2.667)--(-1.5*0.75,0.75*3.667);
\draw (1.5*0.75,0.75*3.667)--(-1.5*0.75,0.75*3.667);
\draw (0*0.75,0.75*4.667)--(-1.5*0.75,0.75*3.667);
\draw (0*0.75,0.75*4.667)--(1.5*0.75,0.75*3.667);
\draw (3*0.75,0.75*2.667)--(0*0.75,0.75*2.667);
\draw (3*0.75,0.75*2.667)--(1.5*0.75,0.75*3.667);
\draw (0*0.75,0.75*2.667)--(1.5*0.75,0.75*3.667);

\fill [white] (-1.5*0.75,0.75*3.667) circle (0.3);
\fill [white] (1.5*0.75,0.75*3.667) circle (0.3);
\fill [white] (-3*0.75,0.75*2.667) circle (0.3);
\fill [white] (0*0.75,0.75*2.667) circle (0.3);
\fill [white] (3*0.75,0.75*2.667) circle (0.3);
\fill [white] (0*0.75,0.75*4.667) circle (0.3);

\draw (-1.5*0.75,0.75*3.667) circle (0.3);
\draw (1.5*0.75,0.75*3.667) circle (0.3);
\draw (-3*0.75,0.75*2.667) circle (0.3);
\draw (0*0.75,0.75*2.667) circle (0.3);
\draw (3*0.75,0.75*2.667) circle (0.3);
\draw (0*0.75,0.75*4.667) circle (0.3);

\draw (-1.5*0.75,0.75*3.667) node {$c_8$};
\draw (1.5*0.75,0.75*3.667) node {$d_8$};
\draw (-3*0.75,0.75*2.667) node {$a_1$};
\draw (0*0.75,0.75*2.667) node {$b_8$};
\draw (3*0.75,0.75*2.667) node {$a_1$};
\draw (0*0.75,0.75*4.667) node {$a_1$};

\fill[black] (0*0.75,0.75*3.333) circle (0.05);
\fill[black] (0*0.75,0.75*-3.333) circle (0.05);

\draw (0,-4) node {$\sCl{7}{126d}$};

\end{tikzpicture}
\vspace{-0.25cm} 
\caption{Kissing configurations in $\R^7$ based
on the hexagonal close-packing} \label{fig:hcp}
\end{figure}

These four spherical codes can again be distinguished by the sizes of
their auto\nobreak{}mor\nobreak{}phism\break
groups, which are $2903040$, $46080$, $103680$ and $3840$,
respectively.

\section{Nine through twelve dimensions} \label{section:9d}

Up through eight dimensions, we are quite confident that the known
kissing configurations are optimal and reasonably confident that the
list we have provided is complete.  However, the situation in nine or
more dimensions is very different. For example, in $\R^9$ the highest
kissing number achieved by any lattice is $272$ (by a theorem of
Watson~\cite{W}), while the best kissing number known is $306$.
Furthermore, there is little\break
reason to believe that the usual constructions based on
error-correcting codes (see Conway and
Sloane~\cite[Chapter~5]{SPLAG}) will prove adequate in high
dimensions.  Even in\break
$\R^{10}$, computer searches have led to intriguing new kissing
configurations (see Elser and Gravel~\cite{EG10}), although so
far they have not improved on the known records.

The laminated lattice $\Lambda_9$ achieves kissing number $272$, and it
is the only lattice that does. Here, we prove two properties of this
kissing configuration: it is not locally jammed, but it is the largest
possible kissing configuration in $\R^9$ that contains the $E_8$ root
system as a cross section.

To describe this code, we first recall the structure of the root
lattice $E_8$, which is used to produce $\Lambda_9$. It the union of
two translates of $D_8$, namely
$$
E_8 = D_8 \cup \big(D_8 + (1/2, \dots, 1/2) \big).
$$
The minimal vectors of $D_8$ are the $112$ vectors of the form
$\pm e_i \pm e_j$ with $i \ne j$\break
(where $e_i$ is the $i$th standard unit vector), and the $E_8$
kissing configuration consists of these vectors as well as the
$128$ vectors of the form $(\pm 1/2, \dots, \pm 1/2)$ with an
even number of minus signs.

The vector $v = (1,0,0,\dots,0)$ is a deep hole of $E_8$, and the
lattice $\Lambda_9$ is generated\break{} 
by $E_8 \times \{0\}$ and $v \times \{1\}$ in $\R^9$. Its
kissing configuration consists of $272$ vectors,
and\break
is the union of the kissing configuration $S$ of $D_9$ (which
contains $144$ vectors) and\break
the set $T$ of $128$ vectors of the form $(\pm 1/2, \dots, \pm
1/2, 0)$ with an even number of minus signs.

By \fullref{cor:rootjammed}, the subconfiguration $S$ is
itself jammed, so it is futile to move those points of the
code, but we will show that the points in $T$ are not even
locally jammed.  The reason is that they have no neighbors
outside of $E_8 \times \{0\}$, so they are free to move
orthogonally to $E_8$.

More formally, given $(x,0) \in T$ with $x \in E_8
\setminus D_8$,
we move it to $\big(x\cos \theta, \sqrt{2} \sin
\theta\big)$, where $-\pi/2 \le \theta \le \pi/2$.  For every
point $y \in E_8$ with $y \ne x$, we have $\langle x,y \rangle
\le 1$ and hence
$$
\big\langle \big(x \cos \theta, \sqrt{2} \sin \theta\big) ,
(y,0) \big\rangle = \langle x,y \rangle \cos\theta  \le 1.
$$
Therefore, the only points $\big(x\cos \theta, \sqrt{2} \sin
\theta\big)$ can come too close to are those in\break
$D_9 \setminus D_8$, that is, those of the form $\pm e_i \pm
e_9$ with $1 \le i \le 8$.  We do indeed run into\break
problems from these points when $|\theta|$ is too large, but
not when $|\theta|$ is small. The relevant inner product is
$$
\big\langle(x \cos \theta, \sqrt{2} \sin \theta\big),
\pm e_i \pm e_9 \big \rangle
= \pm \frac{1}{2} \cos \theta \pm \sqrt{2} \sin \theta.
$$
When $|\theta|$ is sufficiently small, the right side is
clearly bounded by $1$.  Specifically, we obtain a bound of $1$
whenever
$$
|\theta| \le \alpha \stackrel{\textrm{def}}{=}
\tan^{-1} \frac{\sqrt{5}-\sqrt{2}}{2} \approx 0.124  \pi.
$$
Thus, the point $(x,0)$ is not locally jammed.

Furthermore, we can even simultaneously move all the points of
$T$ by moving $(x,0)$ to\break
$(x \cos \theta_x, \sqrt{2} \sin \theta_x)$, where $-\alpha \le
\theta_x \le \alpha$ and $\theta_x$ has sign $(-1)^{k_x/2}$,
with $k_x$ being\break
the number of negative coordinates of $x$. All we need to check
is whether these points come too close to each other.  The
inner product between two of them is simply
$$\big\langle \big(x \cos \theta_x,\sqrt{2} \sin \theta_x\big),
\big(y \cos \theta_y, \sqrt{2} \sin \theta_y\big) \big\rangle {=}
\langle x,y \rangle
(\cos \theta_x)(\cos \theta_y) {+} 2 (\sin \theta_x)(\sin \theta_y).$$
If $\langle x,y \rangle = 1$, then $x$ and $y$ have differing signs in
two coordinates (that is, $k_x$ and\break
$k_y$ differ by $2$). In that case $\theta_x$ and $\theta_y$
have opposite signs, so
$$
(\cos \theta_x)(\cos \theta_y) + 2 (\sin \theta_x)(\sin \theta_y)
\le (\cos \theta_x) (\cos \theta_y) \le 1.
$$
On the other hand, if $\langle x, y \rangle \le 0$, then the inner
product is at most $2 (\sin \theta_x)(\sin \theta_y)$, which is again
at most $1$ (because $|\theta_x|$ and $|\theta_y|$ are less than
$\pi/4$).

\begin{proposition} \label{prop:optikiss}
No kissing configuration in $\R^9$ that contains the $E_8$ root system
as a cross section can have more than $272$ points.
\end{proposition}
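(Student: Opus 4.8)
The plan is to keep the $\sqrt{2}$--scaling of \fullref{rootlattices}, so the code sits on the sphere of radius $\sqrt{2}$ in $\R^9$ and distinct points have inner product at most $1$. Choosing coordinates so that the cross section lies in $\R^8 \times \{0\}$, our code $\mathcal{C}$ contains the $240$ roots of $E_8$ inside $\R^8 \times \{0\}$, and I would split the remaining points according to the sign of their last coordinate, writing $\mathcal{C}^{+}$ for the points $(w,t)$ with $t>0$ and $\mathcal{C}^{-}$ for those with $t<0$. Since $|\mathcal{C}| = 240 + |\mathcal{C}^{+}| + |\mathcal{C}^{-}|$, it suffices to show $|\mathcal{C}^{+}| \le 16$ and $|\mathcal{C}^{-}| \le 16$.

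First I would control the projections of the extra points. If $(w,t) \in \mathcal{C}$ with $t \ne 0$, then compatibility with every root $r$ forces $\langle w, r \rangle \le 1$, so $w$ lies in the polytope $P = \{w \in \R^8 : \langle w, r \rangle \le 1 \text{ for every root } r \text{ of } E_8\}$. The key point is that $P$ is exactly the Voronoi cell of $E_8$: the Voronoi-relevant vectors of $E_8$ are precisely its $240$ minimal vectors, since two roots congruent modulo $2E_8$ differ by an element of $2E_8$ of squared norm at most $8$ and hence (as $2E_8$ has minimal nonzero squared norm $8$) must be equal or antipodal, whereas every other nonzero class of $E_8/2E_8$ contains more than one antipodal pair of vectors of its (larger) minimal norm. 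Because the covering radius of $E_8$ equals $1$ in this normalization, it follows that $P$ is contained in the closed ball of radius $1$, so every extra point $(w,t)$ has $|w|^2 \le 1$ and therefore $t^2 = 2 - |w|^2 \ge 1$; in particular $|t| \ge 1$.

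Next I would use the mutual constraints within $\mathcal{C}^{+}$. For distinct points $(w,t), (w',t') \in \mathcal{C}^{+}$ we have $\langle w, w' \rangle + tt' \le 1$, and since $t, t' \ge 1$ this forces $\langle w, w' \rangle \le 0$. A projection $w$ can equal $0$ only if $t = \sqrt{2}$, and then $\langle (0,\sqrt{2}), (w',t') \rangle = \sqrt{2}\,t' \ge \sqrt{2} > 1$ would rule out any other point of $\mathcal{C}^{+}$, so we may assume all these projections are nonzero; they are also pairwise distinct, because $w = w'$ together with $|w|^2 + t^2 = |w'|^2 + t'^2 = 2$ and $t, t' > 0$ gives $t = t'$. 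Thus the projections of $\mathcal{C}^{+}$ form a set of nonzero vectors in $\R^8$ with pairwise nonpositive inner products, and the classical fact that such a set has at most $2 \cdot 8 = 16$ elements (proved by splitting off one of the vectors, projecting the rest onto its orthogonal complement, and inducting on the dimension) yields $|\mathcal{C}^{+}| \le 16$. The same argument gives $|\mathcal{C}^{-}| \le 16$, and hence $|\mathcal{C}| \le 240 + 16 + 16 = 272$.

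The main obstacle is the geometric input $P \subseteq \{w : |w|^2 \le 1\}$, equivalently the assertion that every vector outside the closed unit ball has inner product exceeding $1$ with some root of $E_8$. I would obtain this from the identification of $P$ with the Voronoi cell of $E_8$ together with the standard computation that $E_8$ has covering radius $1$ (see Conway and Sloane~\cite{SPLAG}); this is exactly where the special arithmetic of $E_8$ enters, and it is why the analogous statement with $E_7$ or $D_8$ in place of $E_8$ would produce a different bound. Everything else is elementary: the only points requiring care are the poles $(0,\pm\sqrt{2})$ handled above, and the remark — automatic from $P \subseteq \{|w|^2 \le 1\}$ — that no point of $\mathcal{C}$ outside the cross section can actually lie in $\R^8 \times \{0\}$, so the decomposition $\mathcal{C} = (\text{roots}) \sqcup \mathcal{C}^{+} \sqcup \mathcal{C}^{-}$ is complete. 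Since the extra $32$ points of the $\Lambda_9$ kissing configuration project to $\pm e_1,\dots,\pm e_8$ with $t=\pm 1$, the bound is attained, and this explains why $\Lambda_9$ is optimal within this family.
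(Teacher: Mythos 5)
Your proof is correct and follows essentially the same route as the paper's: both reduce to showing that any point off the $E_8$ hyperplane has last coordinate of absolute value at least $1$ (you phrase this via the Voronoi cell and covering radius of the $E_8$ lattice, the paper via the deep holes of the $E_8$ root system as a spherical code --- equivalent facts), and both then deduce pairwise nonpositive inner products within each hemisphere and invoke the bound of $2\cdot 8=16$ such vectors in $\R^8$. The only cosmetic difference is your explicit Voronoi-relevance argument for the key geometric input, which the paper instead cites from Conway and Sloane.
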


\begin{proof}
Suppose $\mathcal{C}$ is a kissing configuration that contains the
points $(x,0)$ for $x \in E_8$ with $|x|^2=2$.  The remaining points
must be of the form $(y \cos \theta, \sqrt{2} \sin \theta)$ with $y \in
\R^8$ satisfying $|y|^2=2$ and $-\pi/2 \le \theta \le \pi/2$.

Given any such point $y$, there exists a minimal vector $x \in
E_8$ such that $\langle x,y \rangle \ge \sqrt{2}$.  This claim
amounts to knowing the depth of the deep holes in the $E_8$
root system, which correspond to the cross-polytope facets of
its convex hull.  (See, for example, Conway and Sloane~\cite{cellstruct}.)  It
follows that $\theta=0$ is impossible, since $(y,0)$ would come
too\break
close to the point $(x,0)$.  In fact, because the points
$(x,0)$ and $(y \cos \theta, \sqrt{2} \sin \theta)$ in
$\mathcal{C}$ must have inner product at most $1$, we must have
$\sqrt{2} \cos \theta \le 1$. Thus, $|\kern-0.1em\sin \theta| 
\ge 1/\sqrt{2}$. Without loss of generality, we will focus on
the points with $\sin \theta \ge 1/\sqrt{2}$.

Given two distinct such points $(y \cos \theta, \sqrt{2} \sin \theta)$
and $(z \cos \varphi, \sqrt{2} \sin \varphi)$, their inner product is
again at most $1$.  Therefore,
$$
1 \ge \langle y,z \rangle \cos
\theta \cos \varphi + 2\sin \theta \sin \varphi \ge \langle y,z \rangle \cos
\theta \cos \varphi
+ 1,
$$
from which it follows that $\langle y,z \rangle \le 0$.  (Note that
$\cos \theta$ and $\cos \varphi$ cannot vanish, since then the sine
term would be too large.)

There are no more than $16$ vectors in $S^7$ for which all inner
products between distinct vectors are nonpositive (see
B{\"o}r{\"o}czky~\cite[Theorem~6.2.1]{B}).  Thus, we have shown that
$\mathcal{C}$ contains at most
$16$ additional vectors in each hemisphere, for a total of at most
$240+2\cdot 16 = 272$, as desired.
\end{proof}

As a consequence, the best kissing configuration in $\R^9$ cannot
contain the best one\break
in $\R^8$ (namely, $E_8$) as a cross section. Note also that in
any $272$--point kissing configuration containing $E_8$,
equality holds throughout the proof of \fullref{prop:optikiss},
so there are only finitely many such configurations, each
consisting of $E_8$ with cross polytopes sitting above and
below some of its deep holes. However, the relative position\break
of these cross polytopes may vary.  For example, in $\Lambda_9$ one of
the cross polytopes consists of the points $\pm e_i - e_9$ with $1 \le
i \le 8$.  If $H$ is any $4 \times 4$ Hadamard matrix, we could replace
this cross polytope with the points $\pm v_i - e_9$, where
$v_1,\dots,v_8$ are\break 
the rows of the matrix
$$
\begin{bmatrix}
H/2 & 0\\
0 & H/2
\end{bmatrix}.
$$
The resulting code is genuinely different, because it contains two
points with inner product $-3/2$, while only $\pm 2$, $\pm 1$, $\pm
1/2$ and $0$ occur in the $\Lambda_9$ configuration.

\fullref{9to12} lists the best kissing numbers known in
dimensions $9$ through $12$ (see Conway and
Sloane~\cite[pages~139--140]{SPLAG}).  The configurations are
constructed using constant weight binary codes, and in fact the
$E_7$ and $E_8$ root systems can also be constructed in this
way. Every binary code $\sB$ of block length $n$, size $N$,
constant weight $4$ and minimal distance $4$ yields a periodic
packing in $\R^n$, namely all the vectors in $\Z^n$ that reduce
to codewords in $\sB$ modulo $2$. The vectors of norm $4$ in
this packing form a kissing configuration of size $2n + 16 N$,
consisting of the points $\pm 2 e_i$ together with signed
codewords from $\sB$ (that is, vectors with arbitrary $\pm 1$
entries in the support of a\break{} 
codeword).

\begin{table}[ht!]
\centering
\begin{tabular}{ccc}
Dimension & Best known kissing number & Packing \\ \hline
9 & 306 & $P_{9a}$ \\
10 & 500 & $P_{10b}$ \\
11 & 582 & $P_{11c}$ \\
12 & 840 & $P_{12a}$
\end{tabular}
\caption{The best kissing numbers known in dimensions $9$
through $12$}\label{9to12} \vspace{-0.3cm}
\end{table}

For $n=9$, $10$, $11$ and $12$ one can achieve $N=18$, $30$, $35$ and
$51$, respectively.  These codes are unique up to isomorphism when
$n=9$ or $10$, there are $11$ of them for $n=11$, and there are $17$ of
them for $n=12$ (see~\cite[Table~I]{O10}; the $n=9$ case was proved
by \"Osterg\r{a}rd~\cite{O10}, the $n=10$ case by Barrau~\cite{B08},
and the $n=11$ and $12$ cases by Best~\cite{B77,B78,B80}). Data files
giving coordinates for these codes are available in the supplementary
information (see \fullref{appendix:data}).

For $n \ge 11$ there are multiple codes,  and each of these binary
codes yields a distinct kissing configuration. To see why, first
observe that each contains a cross polytope consisting of the vectors
of the form $\pm 2 e_i$.  This cross polytope is uniquely distinguished
by the property of having large valencies. Specifically, when $n=11$ it
consists of exactly the points with at least $88$ neighbors (that is,
points at inner product $2$), and when $n=12$ it consists of those with
at least $136$ neighbors. These distinguished cross polytopes must
correspond under any isomorphism between two configurations, which must
therefore be a signed permutation of the coordinates.  However, it
would then yield an isomorphism of the underlying binary codes.

In nine through twelve dimensions, these kissing configurations are all
jammed, due to the following proposition, whose hypotheses can speedily
be checked by a computer calculation:

\begin{proposition} \label{proposition:constantweight}
Let $\sB$ be a constant weight code of block length $n$, weight $4$
and minimal distance $4$.  If $\sB$ has the following two properties,
then the corresponding kissing configuration in $\R^n$ is
infinitesimally jammed:
\begin{enumerate}
\item \label{prop:one} For $i < j$, there is a codeword $x
    =(x_1,\dots,x_n) \in \sB$ such that $x_i=x_j=1$.

\item \label{prop:two} For every $x \in \sB$ and $i$ such that $x_i
    = 0$, there exists $y \in \sB$ such that $y_i=1$ and $d(x,y) =
    4$ (that is, the supports of $x$ and $y$ overlap in exactly two
    coordinates).
\end{enumerate}
\end{proposition}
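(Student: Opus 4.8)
The plan is to recognize this kissing configuration as a union of overlapping copies of the $D_4$ root system, to freeze all first-order changes of inner products one block at a time, and then to invoke \fullref{lemma:spanning}.

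The key observation I would start from is that for each codeword $x\in\sB$, the $24$ points $\pm 2e_k$ (for $k$ in the support of $x$) together with the $16$ signed codewords supported on $x$ form, up to a similarity, a copy of the $D_4$ root system. I would verify this either by a one-line count of valencies — each of these points has inner product $2$ with exactly eight of the others, $0$ with six, $-2$ with eight, and $-4$ with one, which is the incidence pattern of $D_4$ — or by noting that the norm-$4$ shell of the lattice $D_4$ is precisely $\{\pm1\}^4\cup\{\pm 2e_i\}$, which is the image of $\sqrt 2$ times the $D_4$ root system under an orthogonal transformation. Call this block $\sC_x$. It spans a four-dimensional space, it is infinitesimally jammed within that span by \fullref{cor:rootjammed}, and it has the same minimal angle $\pi/3$ as the full configuration; so \fullref{lemma:reducedim} shows that in any infinitesimal deformation the inner products between points lying in a common $\sC_x$ are unchanged to first order.

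Next I would cash in the first hypothesis. Because every pair of coordinates lies in the support of some codeword, every pair $\pm 2e_i,\pm 2e_j$ of cross-polytope points lies in a common $\sC_x$; so does every pair consisting of $2e_i$ and a signed codeword whose support contains $i$; and so do any two signed codewords of a single codeword. Thus all of these first-order changes already vanish, and only two kinds of pair remain: $2e_i$ against a signed codeword whose support misses $i$, and two signed codewords of distinct codewords. For these I would use the second hypothesis, which (applied to the relevant codeword $x$ and a coordinate outside its support) produces a codeword $y$ meeting $x$ in exactly two coordinates $\{a,b\}$ and containing the offending coordinate, and which likewise links the codewords of any two signed codewords. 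Everything then rests on the following claim: if $v$ is a signed codeword of $x$ and $w$ a signed codeword of $y$ that agree with $v$ on $a$ and $b$, so that $\langle v,w\rangle=2$, then $\langle v,w'\rangle+\langle v',w\rangle=0$. I would prove this by adding the neighbour inequality $\langle v,w'\rangle+\langle v',w\rangle\le 0$ over the four choices of the signs of $w$ on the two coordinates of $y$ outside $\{a,b\}$: the resulting sum is negated when one flips the signs of $v$ on the two coordinates of $x$ outside $\{a,b\}$ (still a signed codeword of $x$, still agreeing with $w$ on $a,b$), so two quadruples of nonpositive numbers sum to opposite values and hence every term is zero. Flipping all the signs of $w$ handles inner product $-2$; the inner-product-$0$ case, and the remaining ``$2e_i$ versus signed codeword'' case, follow by writing a suitable cross-polytope point as a difference of two points of a single block $\sC_x$ or $\sC_y$ and pushing the vanishing through the infinitesimal-rotation structure on that block (from \fullref{lemma:reducedim}). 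Once every first-order change of an inner product is zero, \fullref{lemma:spanning} shows the deformation is an infinitesimal rotation, so the configuration is infinitesimally jammed.

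The hard part will be the very last bookkeeping. Because of curvature, a linear relation among the points of the code need not hold among their infinitesimal perturbations, so when I express a cross-polytope point $2e_i$ as a difference of two points of a block $\sC_y$, the perturbation of $2e_i$ agrees with the corresponding difference of perturbations only after projecting onto the span of the support of $y$; I will have to show that the leftover component orthogonal to that span contributes nothing to the inner products under consideration. This is precisely the obstruction that makes jamming subtler on the sphere than in Euclidean space, and it is what forces the argument to use both combinatorial hypotheses rather than the $D_4$-block structure alone.
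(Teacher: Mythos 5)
Your setup coincides with the paper's: the $24$--point block $\sC_x$ attached to each codeword is indeed a scaled $D_4$ root system, \fullref{lemma:reducedim} freezes all inner products inside each block, and together with hypothesis (1) this fixes the cross polytope (after subtracting an infinitesimal rotation) and forces the perturbation $x^\sharp$ of every signed codeword $x$ to vanish on the support of its codeword. Your sign-averaging argument for cross-block pairs at inner product $2$ is also correct: with $\Sigma(v)=\sum_{w}\delta(v,w)$ summed over the four signed codewords $w$ of the linking codeword that agree with $v$ on the two overlap coordinates, one checks (using that perturbations vanish on the respective supports) that $\Sigma(v)+\Sigma(v'')=0$, where $v''$ flips $v$ off the overlap, so all of these nonpositive quantities $\delta(v,w)$ vanish.

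The gap is in the cases you defer to the final ``bookkeeping,'' and it is not bookkeeping: it is the entire content of the proposition. The pairs still at issue are those at inner product $0$ across blocks, in particular $2e_p$ against a signed codeword $x$ with $p\notin\mathrm{supp}(x)$. Since $(2e_p)^\sharp=0$, one has $\delta(2e_p,x)=2x^\sharp_p$, so proving these $\delta$'s vanish \emph{is} proving the perturbations vanish; your inner-product-$2$ identities only yield relations such as $x^\sharp_p=-(x'')^\sharp_p$ between signed codewords of one codeword, not vanishing. Moreover, the method you propose cannot close this. Writing $2e_p=w-w'$ with $w,w'$ in a block $\sC_c$ and ``pushing the vanishing through the infinitesimal-rotation structure'' gives nothing: the infinitesimal rotation of $\mathrm{span}(\sC_c)$ supplied by \fullref{lemma:spanning} annihilates the block's cross polytope and is therefore the zero map, so the projections of $w^\sharp$ and $w'^\sharp$ onto that span are already zero and the linear relation among projections is vacuous; the components of $w^\sharp$ orthogonal to the span---exactly the ``leftover'' you flag---are the unknowns, and no identity internal to one block controls them. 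Note also that pairs at inner product $0$ carry no strut inequality $\delta\le 0$, so the averaging trick is unavailable there. The paper closes precisely this gap by a different use of hypothesis (2): given $x$ with support $\{i,j,k,\ell\}$ and $p\notin\mathrm{supp}(x)$, take the codeword $c$ with $c_p=1$ meeting $\mathrm{supp}(x)$ in $\{i,j\}$ and support $\{i,j,p,q\}$, set $z=-s_ie_i-s_je_j-e_p-e_q$, and observe that $x,z,2e_p,2e_q$ form a $D_4$ Dynkin diagram; applying \fullref{lemma:reducedim} to the scaled $D_4$ root system they span---a third kind of embedded $D_4$, mixing two blocks---freezes $\langle x,2e_p\rangle$ and hence gives $x^\sharp_p=0$ directly. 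Some such cross-block rigid subconfiguration containing both $x$ and $2e_p$ is what your argument is missing.
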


\begin{proof}
Let $\sC$ be the kissing configuration obtained from $\sB$.
First, observe that for $i \neq j$, the points $2e_i$ and
$2e_j$ are part of a scaled $D_4$ root system embedded in
$\sC$.   Specifically, by \eqref{prop:one}, there exist $k$ and
$\ell$ such that $\pm e_i \pm e_j \pm e_k \pm e_\ell$ are all
in $\sC$; then
$$
\{\pm 2e_i, \pm 2e_j, \pm 2e_k, \pm 2e_\ell, \pm e_i \pm e_j \pm e_k \pm e_\ell\}
$$
is a scaled $D_4$ root system.  Therefore, by
\fullref{lemma:reducedim} and \fullref{prop:D4jammed}, the
inner products in $D_4$ and thus in the cross polytope $\{ \pm 2 e_i :
1 \le i \le n\}$ cannot change in any infinitesimal deformation.  Since
the cross polytope spans $\R^n$, we can assume that the points in it
are fixed.\pagebreak

Let
$$
x = s_i e_i + s_j e_j + s_k e_k + s_\ell e_{\ell}
\in \sC
$$
be any of the remaining vectors, with $s_i,s_j,s_k,s_\ell \in
\{\pm 1\}$, and let $y$ be the infinitesimal perturbation of
$x$. Then $x$ and $2e_i$ generate a scaled copy of the $A_2$
root system in $\sC$, which is also infinitesimally jammed.
Therefore the inner product between $x$ and $2 e_i$ does not
change to first order, so $y_i = 0$. Similarly $y_j = y_k =
y_{\ell} = 0$.

Now let $p \not \in \{i,j,k,\ell\}$. By \eqref{prop:two}, there
exists $b \in \sB$ with $b_p = 1$ and such that $b$ overlaps
the support of $x$ in exactly two positions. Assume without
loss of generality that $b$ is supported in positions $i$, $j$,
$p$ and $q$.  Then $\pm e_i \pm e_j \pm e_p  \pm e_q$ is in
$\sC$ for\break
all combinations of signs. The four points $x$, $z=-s_i e_i -
s_je_j - e_p - e_q$, $2e_p$ and $2e_q$ span a scaled copy of
the $D_4$ root system in $\sC$; indeed, they form
a $D_4$ Dynkin diagram: 
\begin{center}
\vspace{0.075cm} 
\begin{tikzpicture}
\fill (0,0) circle (0.05);
\draw (0,0) node[below] {$z$};
\fill (-0.866025403,-0.5) circle (0.05);
\draw (-0.866025403,-0.5) node[below] {$2e_p$};
\fill (0.866025403,-0.5) circle (0.05);
\draw (0.866025403,-0.5) node[below] {$2e_q$};
\fill (0,1) circle (0.05);
\draw (0,1) node[right] {$x$};
\draw (0,0) -- (-0.866025403,-0.5);
\draw (0,0) -- (0.866025403,-0.5);
\draw (0,0) -- (0,1);
\end{tikzpicture}
\vspace{0.075cm} 
\end{center}
Therefore, the inner product between $x$ and $2e_p$ does not
change to first order, so $y_p=0$.  It follows that $y=0$, and
thus $\sC$ is infinitesimally jammed.
\end{proof}

The situation is quite different for lattice kissing configurations.
The best one known in $\R^{10}$ is that of the lattice $\Lambda_{10}$;
it contains $336$ points and is not locally jammed (as one can check by
a calculation like that for $\Lambda_9$). The best lattice kissing
arrangement known in $\R^{11}$ comes from the lattice
$\Lambda_{11}^{\textrm{max}}$; it contains $438$ points and is also not
locally jammed.

The best lattice kissing configuration known in $\R^{12}$ is that of
the Coxeter--Todd lattice $K_{12}$, which is not a laminated lattice.
Unlike what happens in the previous three dimensions, this $756$--point
code is locally jammed and has a transitive symmetry group. However, we
will show that it is not in fact jammed.  Aside from the $A_n$ root\break
systems, the Coxeter--Todd kissing configuration is the only example
analyzed in this\break
paper that is locally jammed but is not jammed.  It is also
remarkable because the Coxeter--Todd lattice is the densest
sphere packing known in $\R^{12}$.  This phenomenon of
seemingly optimal packings with locally jammed yet unjammed
kissing configurations also occurs in three dimensions, but we
know of no other cases.

To unjam the Coxeter--Todd kissing configuration, we will make
use of its Eisenstein structure: $K_{12}$ is a
$\Z[\omega]$--module of dimension $6$, where $\omega$ is a
primitive cube root of\pagebreak\break{} 
unity.  We view $K_{12}$ as a subset of $\C^6$ with the inner
product $\langle x,y \rangle = 2\Re \langle x,y \rangle_\C$,
where $\langle z,w \rangle_\C = \smash{\sum_{j=1}^6} z_j
\overline{w}_j$ is the usual Hermitian inner product on $\C^6$.
The minimal vectors of $K_{12}$ have norm $4$ with respect to
$\langle \cdot,\cdot \rangle$.

The key property of $K_{12}$ is that for minimal vectors $x$ and $y$
satisfying $x \ne \pm \omega^j y$, even the complex inner product
$\langle x,y \rangle_\C$ is bounded \emph{in absolute value} by $1$.
This is quite unusual and does not hold for most other Eisenstein
lattices (or related types of lattices such as Hurwitz lattices), but
it can be checked directly from the list of minimal vectors in Conway
and Sloane~\cite[page~128]{SPLAG}. More conceptually, it follows from
the Eisenstein integrality of $K_{12}$ (that is, the fact that all the
complex inner products are in $\Z[\omega]$).  Specifically, if $|\langle
x,y \rangle_\C| > 1$, then a quick enumeration shows that the only
possibility is $\langle x,y \rangle_\C = \pm \omega^j (1-\omega)$, in
which case $\langle x, \pm \omega^j y \rangle_\C = 1-\omega$.  However,
a complex inner product of $1-\omega$ leads to a real inner product of
$3$, which is impossible.  Note that this property is an assertion about
the minimal distance of the $126$--point configuration in $\C\Proj^5$
obtained by taking the quotient of this code by the action of the
multiplicative group $\C^\times$, which in this case just amounts to
taking the quotient modulo the sixth roots of unity.  In fact, it follows
from Cohn and Kumar~\cite[Theorem~8.2]{Univopt} that the resulting code
in $\C\Proj^5$ is universally optimal.

The Eisenstein structure breaks up the code into $126$ hexagons
(the orbits under multiplication by powers of the sixth root of
unity $-\omega$), and to unjam it we simply rotate each of
these hexagons by arbitrary angles. If we rotate $x$ to
$e^{i\theta} x$ and $y$ to\break{} 
$e^{i\varphi} y$, then
$$
\big\langle e^{i\theta} x , e^{i \varphi}y \big\rangle_\C = e^{i(\theta-\varphi)}
\langle x,y\rangle_\C,
$$
and hence
$$
\big\langle e^{i\theta} x , e^{i \varphi}y \big\rangle
= 2\Re \big\langle e^{i\theta} x , e^{i \varphi}y \big\rangle_\C
\le 2\big|\big\langle e^{i\theta} x , e^{i \varphi}y \big\rangle_\C\big|
= 2|\langle x,y\rangle_\C| \le 2.
$$
If each hexagon is rotated by a slightly different angle,
then this inequality will be strict whenever $x$ and $y$ lie in
different hexagons.  Therefore, we get an actual unjamming.  Within
each hexagon, the minimal distance has not changed.  However, we can
follow up such a deformation by moving the elements in each of the
hexagons away from each other.  For instance, decompose the hexagon
into two equilateral triangles and move them away from each other, in a
direction orthogonal to the plane of the hexagon.  This process
increases the minimal distance within each hexagon and results in a
configuration of $756$ points with minimal angle strictly larger than
$\pi/3$.  Thus, we\break{} 
can not just unjam the $K_{12}$ kissing configuration, but
unjam it so thoroughly that no contact remains (much like in
three dimensions).

Note that it is difficult to predict results such as this based on
dimension counting.  For example, for large codes in $S^{n-1}$, the
\emph{isostatic condition} suggests that about $2(n-1)$ contacts per
particle are needed to ensure jamming (see Torquato and
Stillinger~\cite[page~2641]{TS}), because that
yields $N(n-1)$ constraints for an $N$--particle code, which is the same
as the number of degrees of freedom.  (Strictly speaking, we should
subtract $\dim O(n) = \binom{n}{2}$ from the number of degrees of
freedom, but that is negligible when $N$ is large.)  For sufficiently
generic constraints, one might expect this bound to be sharp, but it is
far from sharp for highly symmetrical configurations. For example, each
particle in the Coxeter--Todd kissing configuration is in contact with
$82$ others, so\break{} 
this configuration has far more contacts than the isostatic
condition requires, but it is nevertheless unjammed. The
$\Lambda_9$ kissing configuration also has plenty of contacts,
but it is not even locally jammed.

\section{Kissing configuration of the Barnes--Wall lattice}
\vspace{-0.03cm} 

In this section, we show that the kissing configuration of the
Barnes--Wall lattice is rigid.  This lattice is the densest sphere
packing known in $\R^{16}$, and it has the highest known kissing
number.

We will consider the $4320$ minimal vectors of $\Lambda_{16}$ as points
on the sphere of radius $2$, which is their usual normalization. Then
the inner products between distinct elements of the kissing
configuration lie in the set $\{\pm 2, \pm 1, 0, -4\}$.

Recall that the minimal vectors of $\Lambda_{16}$ are as
follows (see Conway and Sloane~\cite[page~129]{SPLAG}). There are $480$ of the form
$2^{1/2}(\pm e_i \pm e_j)$ with $i \ne j$, which span\break{} 
$2^{1/2}D_{16}$, and $3840$ of the form
$2^{-1/2}\big(\sum_{i\in I} \pm e_i\big)$, where $I$ is the
support of one of the $30$ codewords of weight $8$ in the
first-order Reed--Muller code $\mathcal{RM}(1,4)$ of block
length $16$ and where the number of minus signs is even.
Codewords in $\mathcal{RM}(1,4)$ correspond to affine linear
functions from $\F_2^4$ to $\F_2$, with the codeword consisting
of the values of the function at the $16$ points in $\F_2^4$.

\begin{proposition}
The kissing configuration of the Barnes--Wall lattice $\Lambda_{16}$ is
infinitesimally jammed.
\end{proposition}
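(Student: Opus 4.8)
The plan is to follow the strategy of the root-lattice and constant-weight arguments: first pin down the $480$ scaled roots, then reduce to moving the $3840$ half-integer minimal vectors, and finally kill each of their perturbations by means of a large supply of embedded $D_4$ root systems.

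Work on the sphere of radius $2$, so that the minimal vectors are the $480$ roots $\sqrt2(\pm e_i\pm e_j)$ and the $3840$ vectors $\tfrac1{\sqrt2}\sum_{i\in I}\varepsilon_ie_i$, where $I$ is a weight-$8$ codeword of $\mathcal{RM}(1,4)$ (an affine hyperplane of $\F_2^4$) and $\varepsilon$ has an even number of minus signs. The $480$ roots form a scaled $D_{16}$ root system, which spans $\R^{16}$, has the same minimal distance as the whole code (every inner product between distinct minimal vectors is at most $2$), and is infinitesimally jammed by \fullref{cor:rootjammed}; so \fullref{lemma:reducedim} and \fullref{lemma:spanning} let us subtract an infinitesimal rotation and assume all $480$ roots are fixed. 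Next, for each weight-$8$ codeword $I$, the $112$ roots $\sqrt2(\pm e_i\pm e_j)$ with $i,j\in I$ together with the $128$ half-integer vectors supported on $I$ form a scaled $E_8$ root system inside the coordinate subspace $\R^I$; it is infinitesimally jammed in $\R^I$ by \fullref{cor:rootjammed}, so \fullref{lemma:reducedim} keeps its inner products fixed, and since the roots it contains span $\R^I$ and are fixed, \fullref{lemma:spanning} applied inside $\R^I$ forces the $\R^I$-component of the perturbation of every half-integer minimal vector with support $I$ to vanish. Thus each half-integer minimal vector's perturbation is supported on the coordinates outside its support.

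The remaining task — killing that transverse part — is where I expect the real difficulty, since no single embedded root subsystem links a half-integer vector to a root supported away from its support without passing through vectors that are not minimal in $\Lambda_{16}$. The device I would use exploits the fact that a union of two parallel affine planes in $\F_2^4$ is again an affine hyperplane. Fix a half-integer minimal vector $x=\tfrac1{\sqrt2}\sum_{i\in I}\varepsilon_ie_i$ and a two-dimensional subspace $H'$ of the linear hyperplane whose cosets are $I$ and $I^c$; this splits $I=I_1\sqcup I_2$ and $I^c=J_1\sqcup J_2$ into cosets of $H'$. A short parity check shows one can choose signs $\eta$ on $I^c$ so that, with $f_1,f_2,f_3,f_4$ the unit vectors $\tfrac12\sum_{p\in I_1}\varepsilon_pe_p$, $\tfrac12\sum_{p\in I_2}\varepsilon_pe_p$, $\tfrac12\sum_{p\in J_1}\eta_pe_p$, $\tfrac12\sum_{p\in J_2}\eta_pe_p$, all $24$ vectors $\sqrt2(\pm f_a\pm f_b)$ with $1\le a<b\le 4$ are again minimal vectors of $\Lambda_{16}$ (each $I_a\cup J_b$ being a weight-$8$ codeword). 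These $24$ vectors form a scaled $D_4$ root system containing $x=\sqrt2(f_1+f_2)$, so \fullref{lemma:reducedim} fixes its inner products and \fullref{lemma:spanning} makes the projections onto the span $V$ of $f_1,\dots,f_4$ of the $24$ perturbations an infinitesimal rotation $\Phi$ of $V$. But each of those $24$ vectors is a half-integer minimal vector, so by the previous paragraph its perturbation, and hence the projection of that perturbation onto $V$, has no component along either of the two $f_a$ whose coordinate block lies in its support; imposing this for all $24$ vectors forces $\Phi f_a=0$ for every $a$, so $\Phi=0$ and in particular $\langle x',f_3\rangle=\langle x',f_4\rangle=0$.

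Finally, holding $H'$, $I_1$, $I_2$ fixed and letting $\eta$ range over the admissible sign patterns on $J_1$ and on $J_2$, the corresponding vectors $f_3$ span $\R^{J_1}$ and the vectors $f_4$ span $\R^{J_2}$; hence $x'$ vanishes on all of $I^c$, and combined with the previous paragraph $x'=0$. Since every half-integer minimal vector is then unperturbed and the roots were fixed, the original deformation was an infinitesimal rotation, so the kissing configuration is infinitesimally jammed.
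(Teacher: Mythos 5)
Your proof is correct, and while it shares the paper's overall skeleton (pin down the scaled $D_{16}$ via \fullref{cor:rootjammed} and \fullref{lemma:spanning}, then kill the perturbations of the $3840$ half-integer vectors using embedded sub-root-systems together with \fullref{lemma:reducedim}), both main steps are implemented by genuinely different means. For the support coordinates, the paper argues directly from inner-product-$2$ pairs (each spans a hexagon, giving $s_iy_{k_i}+s_jy_{k_j}=0$ and hence $y_{k_i}=0$ from three such relations), whereas you invoke the $30$ embedded scaled $E_8$'s, one per weight-$8$ codeword of $\mathcal{RM}(1,4)$ — heavier machinery but conceptually clean. For the transverse coordinates, the paper's key tool is \fullref{lemma:orthoD4}: every pair of orthogonal minimal vectors lies in a common scaled $D_4$, proved by an explicit case analysis; applying it to $x$ and the fixed roots $2^{1/2}(e_{\ell_m}\pm e_{\ell_n})$ immediately gives $y_{\ell_m}=0$. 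You instead build purely half-integer $D_4$'s from the four cosets of a $2$--dimensional subspace $H'\subset\F_2^4$ (correctly using that a union of two distinct cosets of $H'$ is again an affine hyperplane, and that the block-parity condition can be satisfied), and then need two extra steps the paper avoids: showing the induced infinitesimal rotation $\Phi$ of the $4$--space vanishes because each of the $24$ perturbations is transverse to its own support, and varying the sign patterns $\eta$ so the vectors $f_3,f_4$ span $\R^{I^c}$. The trade-off: the paper's route requires proving \fullref{lemma:orthoD4} (which also yields the interesting byproduct about the $135$ invariant cross polytopes), while yours avoids that case analysis at the cost of the $\Phi=0$ and spanning-over-signs arguments. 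One small inaccuracy in your motivating remark: it is not true that no embedded root subsystem of minimal vectors links a half-integer vector to a root supported off its support — \fullref{lemma:orthoD4} produces exactly such a $D_4$ (containing, in the hardest case, two roots and two half-integer vectors) — but this aside carries no logical weight in your argument.
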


\begin{proof}
Consider any infinitesimal deformation of this code.  The points
$2^{1/2}(\pm e_i \pm e_j)$ form a copy of the $D_{16}$ root system
(scaled by $2^{1/2}$ so the minimal distances match).  This root system
is infinitesimally jammed by \fullref{cor:rootjammed}, so after
applying an infinitesimal rotation we can assume that all these points
are fixed to first order.  Furthermore, each pair of points with inner
product $\pm 2$ spans a scaled copy of $A_2$, and each pair of
antipodal points is contained in such a hexagon, so these inner
products also do not change to first order.  Thus, only the inner
products $0$ and $\pm 1$ can possibly change.\pagebreak{} 

We will now show that the facts in the previous paragraph together
imply that the infinitesimal deformation must be identically zero. For
consider a minimal vector $x = 2^{-1/2}\sum_{i=1}^8 s_{i} e_{k_i}$,
where $1 \leq k_1 < \dots < k_8 \leq 16$ and $s_{i} \in \{\pm 1\}$, and
suppose $x$ is infinitesimally perturbed by $y = \sum_i y_i e_i$.

The inner product of $x$ with the $D_{16}$ minimal vector
$2^{1/2}(s_{i} e_{k_i} + s_{j} e_{k_j})$ is $2$ (note that the
supports overlap and the signs cancel), and as noted above this inner
product cannot change to first order.  Because all the $D_{16}$
vectors are fixed, we find that $s_{i}y_{k_i} + s_{j} y_{k_j} =
0$. This holds for every choice of $k_i$ and $k_j$, and if $k_i$,
$k_j$ and $k_\ell$ are distinct, then the equations
$$
s_{i}y_{k_i} + s_{j} y_{k_j} = s_{i}y_{k_i} + s_{\ell} y_{k_\ell}
= s_{j}y_{k_j} + s_{\ell} y_{k_\ell} = 0
$$
imply that $y_{k_i} = y_{k_j} = y_{k_\ell} = 0$.  Since this holds for
all $i = 1, \dots, 8$, we see that the nonzero coordinates of $x$
cannot change to first order.  Next we have to show that the zero
coordinates do not change either.

Let the positions of the zeros be $\ell_1 < \dots < \ell_8$,
and consider the $D_{16}$ vector $z = 2^{1/2}(e_{\ell_m} +
e_{\ell_n})$ for some $m \ne n$. It is orthogonal to $x$, and
by \fullref{lemma:orthoD4} below, $x$ and $z$ are contained in
a sublattice isometric to a scaled copy of $D_4$. Since $D_4$
is infinitesimally jammed, the inner product $\langle x, z
\rangle$ does not change to first order, and\break{} 
since $D_{16}$ is fixed, $z$ itself does not change to first
order. Hence $y_{\ell_m} + y_{\ell_n} = 0$. Similarly,
$y_{\ell_m} - y_{\ell_n} = 0$ by replacing $z$ with $z' =
2^{1/2}(e_{\ell_m} - e_{\ell_n})$. Thus, $y_{\ell_m} = 0$\break{} 
for all $m$, and $x$ does not change to first order, as
desired.
\end{proof}

All that remains is to show that every pair of orthogonal minimal
vectors in $\Lambda_{16}$ is contained in a scaled copy of $D_4$
(\fullref{lemma:orthoD4}). This would be trivial if the automorphism
group acted transitively on these pairs, because it is easy to exhibit
a pair for which it is true (from a copy of $D_4$ inside $D_{16}$, for
instance). However, there are actually two orbits. Each minimal vector
in $\Lambda_{16}$ is orthogonal to $1710$ others, and under the
stabilizer of that vector they form two orbits, of size $1680$ and
$30$.  This non-transitivity is actually quite remarkable, because it
corresponds to a decomposition of the $4320$ minimal vectors into
$135$ cross polytopes: to form one of the cross polytopes, take any
vector, its antipode and the stabilizer orbit of size $30$.  It is
far from obvious that this works, but it forms a decomposition that is
invariant under the automorphism group.  By contrast, the $240$
minimal vectors in $E_8$ form $15$ cross polytopes, and the $196560$
in $\Lambda_{24}$ form $4095$ (see, for example, Elkies~\cite[page~6,
footnote~3]{E}), but in these\break{} 
more symmetrical cases there is no invariant decomposition.

In principle, one could simply check \fullref{lemma:orthoD4}
for the two orbits, but we will give a direct proof that
requires less information about the action of the automorphism
group.\pagebreak{} 

\begin{lemma} \label{lemma:orthoD4}
Any two orthogonal vectors of $\Lambda_{16}$ are contained in a
sublattice isometric to $D_4$ scaled by $2^{1/2}$.
\end{lemma}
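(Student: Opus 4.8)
The plan is to reduce the lemma to an existence statement about minimal vectors, and then settle that statement by a case analysis based on the explicit list of minimal vectors, with a little linear algebra over $\F_2$ at the end. Throughout I keep the normalization of this section, so minimal vectors have norm $4$, and I treat only pairs of minimal vectors (which is all that is used later). Call an orthogonal pair $x,y$ of minimal vectors \emph{good} if there are minimal vectors $g_1,g_3\in\Lambda_{16}$ with $\langle g_1,g_3\rangle=0$, $\langle g_1,x\rangle=\langle g_1,y\rangle=2$, $\langle g_3,x\rangle=2$, and $\langle g_3,y\rangle=-2$. For a good pair, put $g_2=x+y-g_1$ and $g_4=x-y-g_3$; expanding inner products shows that $g_1,g_2,g_3,g_4$ are pairwise orthogonal vectors of norm $4$ in $\Lambda_{16}$, that $x=\tfrac12(g_1+g_2+g_3+g_4)$, and that $y=\tfrac12(g_1+g_2-g_3-g_4)$. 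Then the $24$ vectors $\pm g_i$ $(1\le i\le 4)$ and $\tfrac12(\pm g_1\pm g_2\pm g_3\pm g_4)$ all lie in $\Lambda_{16}$ and form a $D_4$ root system scaled by $2^{1/2}$, so the sublattice of $\Lambda_{16}$ they generate is a copy of the $D_4$ root lattice scaled by $2^{1/2}$, and it contains $x$ and $y$. Hence it is enough to prove that every orthogonal pair of minimal vectors of $\Lambda_{16}$ is good.

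For the case analysis I would use that the minimal vectors $2^{1/2}(\pm e_i\pm e_j)$ generate $2^{1/2}D_{16}$, and that for each weight-$8$ codeword support $I$ of $\mathcal{RM}(1,4)$ the minimal vectors supported on $I$ generate a sublattice $L_I$ isometric to $E_8$ scaled by $2^{1/2}$ (since $E_8$ is generated by $D_8$ together with the vector $\tfrac12(1,\dots,1)$). Several cases are then easy. If $x$ and $y$ are both of the form $2^{1/2}(\pm e_i\pm e_j)$, they are supported on at most four coordinates and so lie in a coordinate copy of $2^{1/2}D_4$, and the lemma holds at once. If $x$ and $y$ both lie in a common $L_I$ — as happens when both are of the second form with the same support, and when one is of the first form supported inside the support of the other — then goodness follows from the corresponding fact for $E_8$: its Weyl group acts distance-transitively on the $240$ roots (as recalled in the proof of \fullref{cor:rootjammed}), hence transitively on ordered orthogonal pairs of roots, one of which lies in a $D_4$ subsystem. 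And if $x$ and $y$ are of the second form with supports $I$ and $J$ meeting in four coordinates, agreeing in sign on two of them and disagreeing on the other two, one takes $g_1$ and $g_3$ of the first form supported, respectively, on the two agreeing and the two disagreeing coordinates of $I\cap J$ (with signs read off from $x$); then $g_1\perp g_3$ automatically and the required inner products are immediate.

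Only two configurations remain: $x=2^{1/2}(\pm e_i\pm e_j)$ with the points $i,j$ outside the support $I$ of $y=2^{-1/2}\sum_{k\in I}\pm e_k$, and $x,y$ of the second form with complementary supports. In both one is forced to take $g_1$ and $g_3$ of the second form, with supports chosen among the weight-$8$ codeword supports — the affine $3$--flats of $\F_2^4$ — passing through $i$ and $j$ (in the first configuration) and meeting $I$ in four coordinates, on which the signs are then dictated by the inner-product conditions. Getting the inner products right is routine; the real point is to make the number of minus signs in each vector even, so that it genuinely lies in $\Lambda_{16}$. In the first configuration the chosen $3$--flat has two coordinates left unconstrained, and flipping signs there fixes the parity; one also has to arrange $\langle g_1,g_3\rangle=0$, which works out provided the functionals defining the supports of $g_1$ and $g_3$, together with the functional defining $I$, form a basis of the three-dimensional space of functionals taking equal values at $i$ and at $j$, so that the cross terms cancel in pairs. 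The second configuration is the hard one, since the two supports are then forced to be complementary and there are no free coordinates. Here the parity of the number of minus signs in $g_1$ depends on the functional $\ell'$ defining its support only through a fixed $\F_2$--linear function of $\ell'$, which vanishes at the functional defining $I$; since a nonzero $\F_2$--linear function on the four-dimensional space of linear functionals vanishes on at least eight of them, one can choose $\ell'$ so that this parity is even, and then taking the complementary flat as the support of $g_3$ makes $g_1\perp g_3$ automatic and gives $g_3$ the same parity. This last $\F_2$--linear-algebra step in the second configuration is the main obstacle; everything else is bookkeeping.
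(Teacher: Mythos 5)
Your proof is correct, but it is organized quite differently from the paper's. The paper first normalizes $x = 2^{1/2}(e_1+e_2)$ using the transitivity of the automorphism group of $\Lambda_{16}$ on minimal vectors (citing Griess), and then produces two further minimal vectors $w,z$ so that $w,x,y,z$ form a $D_4$ Dynkin diagram; after that normalization the only delicate case is a Reed--Muller-type $y$ whose support misses $\{1,2\}$, which the paper handles exactly as in your configuration (i): choose a codeword of $\mathcal{RM}(1,4)$ vanishing at four prescribed coordinates, pass to its complement, and use two unconstrained signs to repair the parity. You instead keep $x$ and $y$ general and aim for an orthogonal frame $g_1,\dots,g_4$ of norm-$4$ vectors with $x,y$ among its half-sums; this is an equally valid way to exhibit the scaled $D_4$, and your verification that $g_2=x+y-g_1$ and $g_4=x-y-g_3$ complete the frame is right. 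The price of not normalizing is a larger case division, including the two-Reed--Muller-vector cases the paper never encounters; the benefit is that you replace the appeal to transitivity of the full automorphism group by the distance-transitivity of the $E_8$ Weyl group for pairs lying in a common section $L_I$. Your hardest case (complementary supports) has no analogue in the paper, and the parity argument there is sound: the parity of the signs forced on $g_1$ is a linear function of the affine functional cutting out its support, that function vanishes on the constants and on the functional defining $I$ (because $x$ and $y$ themselves have even sign parity), so its kernel contains admissible functionals, and taking the complementary flat for $g_3$ then gives $\langle g_1,g_3\rangle=0$ for free together with even parity. Two small slips that do not affect the argument: the relevant space of affine functionals is five-dimensional rather than four-dimensional, and in configuration (i) one should note explicitly (as your choice of basis in fact guarantees) that the two free coordinates of the support of $g_1$ lie outside the support of $g_3$, so that fixing the parity does not disturb the orthogonality of $g_1$ and $g_3$.
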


\enlargethispage{0.5cm} 

\begin{proof}
Let $x$ and $y$ be two orthogonal minimal vectors in $\Lambda_{16}$.
Without loss of generality, we may assume that $x = 2^{1/2}(e_1+e_2)$,
because the automorphism group acts transitively on the minimal vectors
(see Griess~\cite[Theorem~10.2]{G} and~\cite{GCorr}).  We will construct
vectors $w$ and $z$ such that $w,x,y,z$ form a $D_4$ Dynkin diagram, up
to scaling: 
\begin{center}
\begin{tikzpicture}
\fill (0,0) circle (0.05);
\draw (0,0) node[below] {$z$};
\fill (-0.866025403,-0.5) circle (0.05);
\draw (-0.866025403,-0.5) node[below] {$x$};
\fill (0.866025403,-0.5) circle (0.05);
\draw (0.866025403,-0.5) node[below] {$y$};
\fill (0,1) circle (0.05);
\draw (0,1) node[right] {$w$};
\draw (0,0) -- (-0.866025403,-0.5);
\draw (0,0) -- (0.866025403,-0.5);
\draw (0,0) -- (0,1);
\end{tikzpicture}
\end{center}
In other words, $\langle w,x \rangle = \langle x,y \rangle = \langle
y,w \rangle = 0$ and $\langle w,z \rangle = \langle x,z \rangle =
\langle y,z \rangle = -2$.\break{} 
Then these vectors will span a copy of $2^{1/2} D_4$.

First, we analyze the case when $y \in 2^{1/2} D_{16}$.  Then either $y
= \pm 2^{1/2} (e_1-e_2)$ or $y = 2^{1/2}(\pm e_i \pm e_j)$ with $2 < i
< j$.  If $y = \pm 2^{1/2} (e_1-e_2)$, then we can assume the $\pm$
sign is positive (by replacing $y$ with $-y$ if needed, since this does
not change whether $x$ and $y$ are contained in a scaled copy of
$D_4$), and we can take $z = 2^{1/2}(e_3-e_1)$\break{} 
and $w = 2^{1/2}(e_4-e_3)$.  If $y = 2^{1/2}(\pm e_i \pm e_j)$
with $2 < i < j$, then we can assume\break{} 
that $y = 2^{1/2}(e_i \pm e_j)$ and take $z =
-2^{1/2}(e_1+e_i)$ and $w = 2^{1/2}(e_1-e_2)$.

In the remaining case, we have $y = 2^{-1/2} \sum_{i=1}^8 s_i e_{k_i}$
with $s_i \in \{\pm1\}$ and $k_1 < k_2 < \dots < k_8$.  Since $x$ is
orthogonal to $y$, either $k_1=1$ and $k_2=2$, or $k_1>2$.  If $k_1=1$
and $k_2=2$, then $s_1+s_2=0$, and without loss of generality we can
take $s_1=1$ and $s_2=-1$; then let $w = 2^{1/2}(s_3 e_{k_3} - s_4
e_{k_4})$ and $z = -2^{1/2}(e_1 + s_3 e_{k_3})$.

Finally, suppose $k_1>2$.  This case is slightly more subtle,
because $z$ cannot be an element of $2^{1/2} D_{16}$, so we
must instead produce a Reed--Muller codeword.  The code
$\mathcal{RM}(1,4)$ is five-dimensional, so we can find a
nonzero codeword satisfying any four linear conditions.
Specifically, we require it to vanish in coordinates $1$, $2$,
$k_1$ and $k_2$.  The resulting codeword $c$ has weight $8$,
as do all nonconstant codewords.

The complementary codeword
$(\hspace{-0.08em}1\hspace{-0.08em},\hspace{-0.08em}\dots\hspace{-0.08em},\hspace{-0.08em}1\hspace{-0.08em}){-}c$
is also in
$\mathcal{RM}(\hspace{-0.08em}1\hspace{-0.08em},\hspace{-0.08em}4\hspace{-0.08em})$.
Its support
$\{\hspace{-0.08em}\ell_1\hspace{-0.08em},\hspace{-0.08em}\dots\hspace{-0.08em},\hspace{-0.08em}\ell_8\hspace{-0.08em}\}$
contains $1$, $2$, $k_1$ and $k_2$, and it must intersect
$\{k_1,\dots,k_8\}$ in exactly four elements.

Suppose $\ell_1=1$, $\ell_2=2$, and $\ell_3$ and $\ell_4$ are
the other two elements of $\{\ell_1,\dots,\ell_8\} \setminus
\{k_1,\dots,k_8\}$. Now let $z = 2^{-1/2}\sum_{i=1}^8 t_i
e_{\ell_i}$ with $t_1=t_2=-1$ and $t_i = -s_j$ whenever $\ell_i
= k_j$.  (The signs $t_3$ and $t_4$ can be chosen arbitrarily,
subject to $z$ having a even number of minus signs.) Finally,
let $w = -2^{1/2}(t_3 e_{\ell_3} + t_4 e_{\ell_4})$. As in the
previous cases, the vectors $w,x,y,z$ span a copy of $2^{1/2}
D_4$, which completes the proof.
\end{proof}
\pagebreak{} 

\section{Kissing numbers in dimensions $25$ through $31$}
\label{section:kissing25to31} \vspace{-.2cm} 

Before this paper, the best kissing numbers known in dimensions
$25$ through $31$ were those shown in
\fullref{table:previouskiss} (achieved by laminated lattices,
see Conway and Sloane~\cite{Laminated}, with a correction from
Mus\`es~\cite{Muses}).  These numbers increase surprisingly
slowly from each dimension to the next, which made it difficult
to believe they could be optimal, but\break{} 
they had not been improved since 1982, despite improvements
starting in $32$ dimensions from Edel, Rains and
Sloane~\cite{ERS}.

\begin{table}[ht!]
\centering
\begin{tabular}{cc}
Dimension & Previous record \\ \hline
25 & 196656 \\
26 & 196848 \\
27 & 197142 \\
28 & 197736 \\
29 & 198506 \\
30 & 200046 \\
31 & 202692
\end{tabular}
\vspace{-0.2cm} 
\caption{Previous record kissing numbers}
\label{table:previouskiss}
\vspace{-0.2cm} 
\end{table}

All of these codes were based on the Leech lattice
$\Lambda_{24}$ via a recursive construction in which each
contains the previous one as a cross section.  Recall that
$\Lambda_{24}$ is the unique densest $24$--dimensional lattice
(see Cohn and Kumar~\cite{Leechopt}) and its kissing
configuration of $196560$ minimal vectors of norm $4$ is the
unique optimal $24$--dimensional kissing arrangement (see
Odlyzko and Sloane~\cite{OS}, Leven{\v{s}}te{\u\i}n~\cite{L}
and Bannai and Sloane~\cite{BS}).

The old $25$--dimensional kissing number is just $96$ more than
the value achieved in\break{} 
$\R^{24}$, and it is obtained by putting cross polytopes above
and below the Leech lattice\break{} 
kissing configuration. Specifically, in the coordinates used by
Conway and Sloane \cite[\break
page~133]{SPLAG}, the Leech lattice contains a
copy of $2^{1/2}D_{24}$, and to extend its kissing
configuration to $25$ dimensions one simply includes the
remaining minimal vectors in\break{} 
$2^{1/2}D_{25}$. Much like the $\Lambda_9$ kissing
configuration in $\R^9$ (which is constructed in a precisely
analogous way from $E_8$), this code is not even locally
jammed: all the points not in $2^{1/2}D_{25}$ can move.
However, the analogy with $9$ dimensions is not perfect, and
the $25$--dimensional version of \fullref{prop:optikiss} is not
even true. We begin this section by showing how to improve the
$25$--dimensional kissing number while retaining the Leech
lattice kissing configuration as a cross section.  Once we have
shown that, we will abandon the cross section constraint and go
on to improve all the kissing numbers from $25$ through $31$
dimensions.

Given a vector $v \in \Lambda_{24}$ of norm $6$, consider the
vectors $(2v/3,\pm2/\sqrt{3}) \in \R^{25}$, which lie on the
same sphere (of radius $2$) as the minimal vectors of the Leech
lattice\pagebreak{} 
$\Lambda_{24} \times \{0\}$.  If $x$ is any minimal vector in
the Leech lattice, then $|v-x|^2 \ge 4$, from which it follows
that $\langle v,x \rangle \le 3$. Thus,
$$
\big\langle (2v/3,\pm2/\sqrt{3}),(x,0) \big\rangle \le 2,
$$
so all the new vectors stay far enough away from those in the Leech
lattice.  However, they may come too close to each other if we use
vectors $v$ that are too close.  To avoid that problem, we choose a
subset $R$ of the vectors of norm $6$ in $\Lambda_{24}$ such that for
$v,w\in R$ with $v \ne w$, we have $\langle v,w \rangle \le 1$.  Then
$$
\big\langle (2v/3,\pm2/\sqrt{3}),(2w/3,\pm2/\sqrt{3}) \big\rangle
\le 4/9 + 4/3 < 2,
$$
as desired.  Finally,
$$
\big\langle (2v/3,2/\sqrt{3}),(2v/3,-2/\sqrt{3})\big\rangle
= 4\cdot6/9 - 4/3 < 2.
$$
Thus,
$$
\{(x,0) : x \in \Lambda_{24},\, |x|^2=4\} \cup 
\{(2v/3,\pm2/\sqrt{3}) : v \in R\}
$$
achieves a kissing number of $196560+2|R|$ in $\R^{25}$.

We do not know how to maximize the size of $R$, but greedy
approaches easily yield $|R|>48$ via a computer search and thus
improve the kissing number. The best we have achieved is
$|R|=70$, and the vectors are listed in the supplementary
infor\-mation 
(see \fullref{appendix:data}).  If we take into account that
all the inner products are in\break{} 
$\{-6, -4,-3,-2,\pm 1, 0\}$, then linear programming bounds
(see Delsarte, Goethals and Seidel~\cite{DGS}) prove that $|R|
\le 280$. Thus, our construction of $R$ cannot be improved by
more than a factor of four, but we do not know how close to
optimal it is.

We do not have a conceptual explanation of the $70$--point configuration
produced by the computer search, but we do have a simple construction
that achieves $|R|=56$, which is enough to improve the kissing number
in $\R^{25}$.

\begin{proposition}\label{lemma:56}
There is an antipodal set $R$ of vectors of norm $6$ in $\Lambda_{24}$
such that $|R|=56$ and $|\langle x, y \rangle| \le 1$ for all distinct,
non-antipodal $x, y \in R$.
\end{proposition}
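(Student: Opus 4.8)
The plan is to exhibit $R$ explicitly and then check the two required properties directly. I would work in the coordinate model of $\Lambda_{24}$ built from the binary Golay code $\mathcal{G}$, as in Conway and Sloane~\cite[Chapter~10]{SPLAG}: here $\Lambda_{24} = \tfrac{1}{\sqrt8}\Lambda$ with $\Lambda\subset\Z^{24}$ cut out by congruences indexed by codewords of $\mathcal{G}$, and the norm-$6$ vectors fall into a short list of coordinate shapes. The one suited to this problem is the ``dodecad'' shape $\tfrac{1}{\sqrt8}(\pm2,\dots,\pm2,0,\dots,0)$, with the twelve $\pm2$'s placed on the support of a weight-$12$ codeword of $\mathcal{G}$ and an even number of minus signs (the condition for lying in $\Lambda$); such a vector has norm $\tfrac18\cdot4\cdot12=6$. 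I would select $28$ dodecads, equip each with a sign pattern (using a few vectors per dodecad if convenient), call the resulting vectors $v_1,\dots,v_{28}$, and set $R=\{\pm v_1,\dots,\pm v_{28}\}$, so that $R$ is antipodal and $|R|=56$ by construction.

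The only nontrivial point is then the bound $|\langle v_i,v_j\rangle|\le1$ for $i\ne j$. Since $\Lambda_{24}$ is an even integral lattice these inner products are automatically integers, and $\langle v_i,v_j\rangle=\tfrac12\sum_{k\in D_i\cap D_j}\varepsilon_k$, where $D_i,D_j$ are the supporting dodecads and $\varepsilon_k=\pm1$ records whether the two sign patterns agree at $k$. Two distinct weight-$12$ codewords of $\mathcal{G}$ have symmetric difference a codeword, hence of weight in $\{8,12,16,24\}$, so $|D_i\cap D_j|\in\{8,6,4,0\}$, while $D_i=D_j$ contributes $12$ common positions. Thus the inner-product condition amounts to requiring that on every such overlap the signed count $\sum_k\varepsilon_k$ has absolute value at most $2$ (it vanishes automatically when $D_i$ and $D_j$ are complementary). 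So everything reduces to choosing $28$ dodecads together with one globally consistent assignment of signs that tames all $\binom{28}{2}$ overlaps simultaneously.

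That design problem is the heart of the matter and the step I expect to be the obstacle: the fully flexible version is a moderately large constraint-satisfaction problem, and the point of a \emph{simple} construction is to make a structured choice for which the verification collapses. In practice I would look for a configuration stabilized by a large subgroup of $M_{24}$ — for instance the setwise stabilizer of an octad, so that the $28$ chosen dodecads correspond to the $28=\binom{8}{2}$ pairs of points of that octad — so that the dodecads and their signs fall into only a few orbits and $|\langle v_i,v_j\rangle|\le1$ need only be checked on orbit representatives. Failing a clean closed form, one simply writes down the $28$ vectors and verifies the $\binom{28}{2}$ inner products by hand or by machine, exactly as was done for the $70$-point set earlier in this section.
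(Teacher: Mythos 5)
Your setup is sound: the dodecad-shaped vectors $\tfrac{1}{\sqrt8}(\pm2^{12},0^{12})$ do have norm $6$, do lie in $\Lambda_{24}$ exactly when the number of minus signs is even, and your reduction of the inner-product condition to $\bigl|\sum_{k\in D_i\cap D_j}\varepsilon_k\bigr|\le 2$ on overlaps of size $8$, $6$, $4$ or $0$ is correct. But the proposition is an existence statement, and you never exhibit the object whose existence is claimed. You correctly identify the choice of $28$ dodecads with compatible sign patterns as "the heart of the matter" and then leave it unresolved: the octad-stabilizer scheme ($28=\binom{8}{2}$) is offered only as a heuristic with no specified map from pairs of octad points to signed dodecads and no verification that any such scheme satisfies the constraints, and the final fallback is "verify by machine." As written this is a reduction of the problem to a nontrivial combinatorial design problem, not a proof. (It also works against the stated purpose of the proposition, which is to give a construction that does not rely on search -- the computer search has already produced the larger set $|R|=70$.)

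For contrast, the paper's route avoids this design problem entirely by changing coordinates. It uses the two orthogonal copies of the Coxeter--Todd lattice $K_{12}$ inside $\Lambda_{24}$ (so cross inner products vanish and only $28$ vectors per copy are needed), realizes $K_{12}$ as an Eisenstein lattice in $\C^7$, and takes norm-$6$ vectors of the form $(1,\omega,\omega^2,0,0,0,0)$ supported on the lines of the Fano plane. The condition that the seven vectors form a regular simplex becomes a proper $3$--edge-coloring of the Heawood graph, which exists by K\H{o}nig's line coloring theorem, and an explicit $7\times 7$ array plus a $6$--cycle permutation of coordinates then yields all $28$ points with inner products in $\{0,\pm1\}$. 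If you want to salvage your approach, you must either write down the $28$ signed dodecads and check the $\binom{28}{2}$ inner products, or find a structural reason (a genuine orbit argument under a subgroup of $M_{24}$) that forces every overlap sum into $\{0,\pm2\}$; neither is present in the current write-up.
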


\begin{proof}
The Leech lattice $\Lambda_{24}$ contains two orthogonal copies
of the Coxeter--Todd lattice $K_{12}$ (see Conway and
Sloane~\cite[page~128, equations (129) and (130)]{SPLAG}).
Thus, it suffices to find such a subset of size $28$ within
$K_{12}$.  To do so, we will use the construction of $K_{12}$
given in Section~2.5 of Conway and Sloane~\cite[Section~2.5,
page~426]{CS83}, which is called $\Lambda^{(7)}$ in that paper.

This construction realizes $K_{12}$ as an Eisenstein lattice,
that is, a $\Z[\omega]$--module with $\omega = e^{2\pi i/3}$.
Let $\alpha = 2+3\omega$, so $|\alpha|^2 = 7$.  Then the
Coxeter--Todd lattice $K_{12}$ is\pagebreak{} 
isometric to $L$, where
$$L = \frac{1}{\alpha} \bigg\{ (x_1,\dots,x_7) \in \Z[\omega]^7 : x_1
  \equiv x_2 \equiv \dots \equiv x_7 \pmod{\alpha} \text{ and }
  \sum_{j=1}^7 x_j = 0 \bigg\}.$$
As before, we use the inner product $\langle x,y \rangle = 2\Re
\langle x,y \rangle_\C$, where $\langle z,w \rangle_\C = \sum_{j=1}^7
z_j \wwbar{w}_j$ is the usual Hermitian inner product on $\C^7$.

We will make use of only some of the vectors of norm
$6$ in $L$, namely those obtained by permuting the coordinates of
$(1,\omega,\omega^2,0,0,0,0)$.
Let $W$ be the set of such points.  Note that $1+\omega+\omega^2=0$ and
$\alpha \equiv \alpha \omega \equiv \alpha \omega^2 \equiv 0
\pmod{\alpha}$, so these vectors are indeed in $L$.

We begin by finding a six-dimensional regular simplex in
$K_{12}$ (that is, seven vectors\break{} 
of norm $6$ whose inner products are all $-1$). To do so, start
by choosing seven vectors in $W$ such that the supports of any
two overlap in exactly one coordinate position.  Equivalently,
label the seven positions by points in the Fano plane
$\Proj^2(\F_2)$, and choose the seven lines in that plane as
the supports of the vectors.

Suppose two of these vectors overlap in position $i$, with coordinates
$\omega^j$ and $\omega^k$.  Then their inner product in $L$ is $2\Re \, 
\omega^{j-k}$, which equals $-1$ if and only if $j \ne k$.  Thus,\break{} 
we must ensure that each pair of vectors differ in the position
where they overlap.

Equivalently, labeling the incident point-line pairs in $\Proj^2(\F_2)$
with $1$, $\omega$ or $\omega^2$ must yield a three-coloring of the
edges of the bipartite incidence graph between the points and lines
(the Heawood graph).  The existence of such a coloring follows from
K\H{o}nig's line coloring theorem (see Lov\'asz and
Plummer~\cite[Theorem~1.4.18]{LP09}), which says that the chromatic
index of a bipartite graph equals its maximal degree. It is not difficult
to exhibit such a coloring, and hence a six-dimensional simplex in
$K_{12}$, explicitly as the rows of the following array:
$$
\begin{array}{ccccccc}
0 & 1 & 0 & \omega & 0 & \omega^2 & 0\\
1 & 0 & 0 & \omega^2 & \omega & 0 & 0\\
0 & 0 & \omega & 1 & 0 & 0 & \omega^2\\
\omega & \omega^2 & 1 & 0 & 0 & 0 & 0\\
0 & \omega & 0 & 0 & \omega^2 & 0 & 1\\
\omega^2 & 0 & 0 & 0 & 0 & 1 & \omega\\
0 & 0 & \omega^2 & 0 & 1 & \omega & 0
\end{array}
$$
To produce $14$ of the desired $28$ points in $L$, we simply
take these seven points and their antipodes.  The remaining
$14$ can be obtained from them by permuting the
coordinates.\break{}\pagebreak{}

Specifically, we fix the seventh coordinate and permute the
others via the six-cycle
$$
1 \mapsto 6 \mapsto 4 \mapsto 3 \mapsto 2 \mapsto 5 \mapsto 1
$$
or its inverse.  It is not difficult to check that if we apply either
permutation to the original simplex in $L$, then the seven new points
all have inner products $0$ or $\pm1$ with the original points.

Thus, we have found $28$ points in $K_{12}$ with the desired inner
products, and hence $56$ in $\Lambda_{24}$.
\end{proof}

In the remainder of this section, we will further improve the records
for kissing numbers in dimensions $25$ through $31$.  All of our
constructions will be quite simple, except that they rely on finding a
subset $S$ of the minimal vectors in $\Lambda_{24}$ such that no two
elements of $S$ have inner product greater than $1$.  We will give a
conceptual argument that achieves $|S|=288$, which will be enough to
improve all the kissing numbers.  Further optimization using a computer
has led to an antipodal subset with $|S|=480$, which yields the new
records shown in \fullref{table:kissing}.  The code of size $480$ is
enumerated in\break{} 
the accompanying computer files (see \fullref{appendix:data}).
If we take into account that all the inner products are in
$\{-4, -2, \pm 1, 0\}$, then linear programming bounds prove
that $|S| \le \lfloor 9360/11\rfloor = 850$.

\begin{lemma}\label{lemma:288}
There is an antipodal set $S$ of minimal vectors in $\Lambda_{24}$ such
that $|S|=288$ and $|\langle x, y \rangle| \le 1$ for all distinct,
non-antipodal $x, y \in S$.
\end{lemma}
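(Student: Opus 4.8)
\emph{Plan.}
The inner products between distinct minimal vectors of $\Lambda_{24}$ lie in $\{-4,-2,\pm 1,0\}$, so the problem is to exhibit $288$ minimal vectors, closed under $x\mapsto -x$, among which $2$ and $-2$ never occur as inner products and $-4$ occurs only between antipodal pairs. As in the proof of \fullref{lemma:56}, I would build such a configuration out of the combinatorics of the binary Golay code rather than search for it, working in the coordinates of Conway and Sloane~\cite[page~133]{SPLAG}.

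The building blocks are the ``octad'' minimal vectors $\frac{1}{\sqrt 8}\sum_{i\in O}\pm 2e_i$, where $O$ runs over the $759$ octads of the Steiner system $S(5,8,24)$ and the number of minus signs is even. For a single octad, if the sign patterns are taken to be the $[8,4,4]$ extended Hamming code regarded as a set of $\pm 1$ vectors, then any two of the $16$ resulting vectors have inner product $4-d$ with $d\in\{0,4,8\}$ their Hamming distance; thus distinct vectors belonging to one octad have inner product $0$ or $-4$, and the $16$ vectors form $8$ antipodal pairs. For two distinct octads meeting in $0$ or $2$ points, every cross inner product equals $\frac{1}{2}\sum_{i\in O_1\cap O_2}(\pm 1)\in\{0,\pm 1\}$, whatever the signs. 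Hence any family $\mathcal{O}$ of octads that pairwise meet in at most two points produces $16\,|\mathcal{O}|$ minimal vectors with exactly the required properties. Comparing the $8\,|\mathcal{O}|$ point--octad incidences with the at most $2\binom{|\mathcal{O}|}{2}$ pairwise intersections and applying convexity forces $|\mathcal{O}|\le 9$, with equality only if each of the $24$ points lies in three of the octads and every two octads meet in exactly two points; that intersection pattern is exactly a $2$-$(9,3,2)$ design on the octads, which I would realize explicitly inside $S(5,8,24)$ -- for instance as an orbit of a suitable subgroup of $M_{24}$, or by hand -- producing $144$ vectors.

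To reach $288$ I would adjoin a second family of $144$ minimal vectors compatible with the first. One option is a second $9$-octad family, where cross inner products between the two families -- which can now involve octads meeting in $4$ points -- are pushed into $\{0,\pm 1\}$ by taking the signs to be balanced on suitable tetrads; another is to run the whole construction inside the sublattice $K_{12}\perp K_{12}$ of $\Lambda_{24}$ used in \fullref{lemma:56}, splitting $288$ as $144+144$ and using the fact (see \fullref{section:9d}) that $|\langle x,y\rangle_{\C}|\le 1$ for distinct, non-proportional minimal vectors of $K_{12}$ to reduce to choosing representatives in $\C\Proj^5$ whose pairwise complex inner products avoid $\pm 1$. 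In any case, once the configuration is written down, verifying that all its inner products lie in $\{0,\pm 1\}$ apart from antipodes is a finite case check on how the supporting codewords intersect, of the same flavor as in \fullref{prop:D4jammed} and \fullref{proposition:constantweight}.

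The real difficulty is the combinatorial design step. The inner products $\pm 2$ and $-4$ are the generic ones among minimal vectors of $\Lambda_{24}$, so forcing all $\binom{288}{2}$ of them to avoid those values requires a genuinely special configuration, and $288$ is presumably near the largest size for which a conceptual construction of this type exists -- linear programming only gives $|S|\le 850$. Finding the correct combinatorial object, and confirming that it has exactly $288$ elements, is the analogue here of finding the Heawood-graph $3$-coloring in \fullref{lemma:56}, and I expect it to rely on the sextet structure of the Golay code or the action of a large subgroup of $M_{24}$ rather than on ad hoc manipulation.
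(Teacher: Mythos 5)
Your strategy differs from the paper's, and as written it does not reach $288$. The paper's proof is a two-line construction: take the $256$ vectors $\bigl(\sum_i (-1)^{s_i}e_i\bigr)/2$ with $(s_1,\dots,s_{16})$ in the Nordstrom--Robinson code (size $256$, minimum distance $6$) together with the $32$ vectors $\pm 2e_i$; since the Nordstrom--Robinson distances are $6$, $8$, $10$, $16$, all non-antipodal inner products land in $\{0,\pm 1\}$, the resulting $288$ vectors span a copy of the Barnes--Wall lattice $\Lambda_{16}$, and one embeds $\Lambda_{16}$ into $\Lambda_{24}$. Your octad-based first stage is fine as far as it goes: the counting argument correctly caps a family of octads pairwise meeting in at most two points at $9$, yielding at most $16\cdot 9=144$ vectors. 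But that is exactly half of what the lemma needs, so the entire burden falls on your doubling step, and neither option you offer is carried out; moreover the existence of the extremal $9$--octad family inside $S(5,8,24)$ is itself only asserted, not constructed.

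The more serious problem is that your first doubling option fails as stated. If $O_1$ and $O_2$ meet in a tetrad and each carries the $[8,4,4]$ Hamming code as its set of sign patterns, then both families contain the all-positive vector $\tfrac{2}{\sqrt 8}\chi_{O_j}$, and $\bigl\langle \tfrac{2}{\sqrt 8}\chi_{O_1},\tfrac{2}{\sqrt 8}\chi_{O_2}\bigr\rangle=\tfrac{4}{8}\lvert O_1\cap O_2\rvert=2$. More generally, since both sign codes contain $0$ (and in fact project onto subspaces of $\F_2^{O_1\cap O_2}$ containing $0$), you cannot avoid a coincidence of restrictions to the tetrad, hence an inner product $\pm 2$, without replacing the linear Hamming code by carefully chosen cosets on the different octads. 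Arranging such cosets so that all cross distances stay large is precisely the nonlinear trick that produces the Nordstrom--Robinson code, so repairing your construction essentially forces you back to the paper's. The second option (working inside $K_{12}\perp K_{12}$ and choosing hexagon representatives with complex inner products avoiding $\pm 1$) is likewise only a plan; note that you cannot use whole hexagons, since multiplying by sixth roots of unity rotates a unit complex inner product onto $\pm 1$ and hence a real inner product of $\pm 2$. As it stands the proof is an outline with the decisive combinatorial object missing, which you yourself acknowledge in the final paragraph.
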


The code we construct below is relatively well known, but for
completeness we sketch the construction here.

\begin{proof}
Let $\mathcal{NR}$ be the Nordstrom--Robinson code, a nonlinear code of
size $256$ and distance $6$ in $\F_2^{16}$ (see MacWilliams and
Sloane~\cite[page~73]{MS}). Let
$W$ consist of all the vectors $\big(\sum_i (-1)^{s_i} e_i\big)/2$ with
$(s_1, \dots, s_{16}) \in \mathcal{NR}$, as well as the vectors $\pm
2e_i$\break{} 
for $1 \leq i \leq 16$. The elements of $W$ have norm $4$, and
is easy to check that distinct vectors of $W$ have inner
product at most $1$ with each other. One can also check that\break{} 
the span of $W$ is isomorphic to the Barnes--Wall lattice.
Since there is an embedding\break{} 
of the Barnes--Wall lattice into the Leech lattice (see Conway
and Sloane~\cite{Laminated}) we obtain the desired subset $S$
of $\Lambda_{24}$ as the image of $W$.
\end{proof}

For the rest of this section, $S$ will denote any set of minimal
vectors in $\Lambda_{24}$ such that $\langle x, y \rangle \le 1$ for
all distinct $x, y \in S$.  In particular, it may be the set from
\fullref{lemma:288},\break{} 
the one of size $480$ found by a computer search, or an even
larger set.  Also, let $\sC$ denote the set of minimal vectors
in $\Lambda_{24}$.

\begin{proposition} \label{prop:25}
The kissing number in dimension $25$ is at least $196560 + |S|$.
\end{proposition}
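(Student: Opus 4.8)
The plan is to write down the required configuration explicitly. Work in $\R^{25}=\R^{24}\oplus\R$, with $\Lambda_{24}$ normalized so that its minimal vectors have norm $4$ (lying on the sphere of radius $2$); on this sphere, being a kissing configuration just means that every inner product between distinct points is at most $2$. I would obtain the new code from the Leech kissing configuration by a ``splitting'' move: keep the point $(x,0)$ for every $x\in\sC\setminus S$, and for each $v\in S$, in place of the discarded point $(v,0)$ introduce the two points
$$
p_v^{\pm}=\bigl(\sqrt{2/3}\,v,\ \pm\,2/\sqrt{3}\bigr)\in\R^{25}.
$$
Each $p_v^{\pm}$ has norm $\tfrac23|v|^2+\tfrac43=\tfrac83+\tfrac43=4$, and all of these points are plainly distinct, so the code has $(196560-|S|)+2|S|=196560+|S|$ elements. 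Everything then reduces to checking that it is a kissing configuration.

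The verification is a short case analysis, bounding by $2$ the inner product of each pair of distinct points. Two points of the form $(x,0)$ are distinct minimal vectors of $\Lambda_{24}$, so their inner product is at most $2$. The inner product of $(x,0)$ with $p_v^{\pm}$ is $\sqrt{2/3}\,\langle v,x\rangle$; since $x\in\sC\setminus S$ and $v\in S$ we have $x\ne v$, so $\langle v,x\rangle\le 2$ (the largest inner product between distinct minimal vectors of $\Lambda_{24}$), whence $\sqrt{2/3}\,\langle v,x\rangle\le 2\sqrt{2/3}<2$. For distinct $v,w\in S$, the inner product of $p_v^{+}$ with $p_w^{+}$, and likewise of $p_v^{-}$ with $p_w^{-}$, equals $\tfrac23\langle v,w\rangle+\tfrac43\le\tfrac23+\tfrac43=2$ by the defining property $\langle v,w\rangle\le 1$ of $S$. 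Finally, the inner product of $p_v^{+}$ with $p_w^{-}$ is $\tfrac23\langle v,w\rangle-\tfrac43\le\tfrac83-\tfrac43=\tfrac43<2$, since $\langle v,w\rangle\le 4$ always. These cases are exhaustive, so the code is a kissing configuration in $\R^{25}$ of size $196560+|S|$.

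There is no real obstacle once this construction is found; the two things needing care are the bookkeeping and the choice of the lifting scale. Writing a lift as $(cv,t)$ with $4c^2+t^2=4$, the requirement that same-hemisphere lifts $p_v^{+},p_w^{+}$ stay at angular distance at least $\pi/3$ forces $c^2\ge 2/3$, while compatibility of $p_v^{\pm}$ with a retained point $(x,0)$, $x\ne v$, forces $c\le 1$; I would take $c=\sqrt{2/3}$. The interval $[2/3,1]$ of allowed values of $c^2$ is nonempty precisely because elements of $S$ obey the sharper inequality $\langle v,w\rangle\le 1$ rather than the generic bound $2$ for Leech minimal vectors, and this slack is exactly what makes the splitting move gain a point; it is also why $(v,0)$ itself must be discarded, since $\langle p_v^{\pm},(v,0)\rangle=4\sqrt{2/3}>2$. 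Running the same construction with the larger sets $S$ from \fullref{lemma:288} or from the computer search then produces the record kissing numbers recorded in \fullref{table:kissing}.
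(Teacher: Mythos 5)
Your construction is exactly the paper's: delete the points of $S$ from the equatorial copy of the Leech kissing configuration and replace each by two lifts at latitudes $\pm\theta$, with your choice $\cos\theta=\sqrt{2/3}$ lying at the endpoint of the paper's allowed range $1/4\le\sin^2\theta\le1/3$, and the case analysis of inner products is the same. The proof is correct.
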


\begin{proof}
Fix an angle $\theta$ (to be determined shortly), and let
$$
T = \{(x,0) : x \in \sC \setminus S \}
\cup \{(x \cos \theta,\pm 2 \sin \theta) : x \in S \}.
$$
In other words, we remove $S$ from the equatorial hyperplane and
replace it with two copies lying on the parallels at latitude $\pm
\theta$.

The inner product between a vector $(x,0)$ and $(y \cos\theta , \pm 2
\sin \theta)$ is $\langle x,y \rangle \cos \theta \le 2$, since $x \ne
y$ (and hence $\langle x,y \rangle \le 2$).

The inner product between two distinct vectors in the northern
hemisphere is
\begin{align*}
\big\langle (x \cos \theta, 2 \sin \theta), (y \cos \theta,2 \sin \theta) \big\rangle
&= \langle x, y \rangle \cos^2 \theta + 4 \sin^2\theta \\
& \leq \cos^2\theta  + 4\sin^2\theta\\
& = 1 + 3 \sin^2 \theta,
\end{align*}
which is at most $2$ provided that $\sin^2 \theta \le 1/3$.

The inner product between two vectors in opposite hemispheres is
\begin{align*}
\big\langle (x \cos \theta, 2 \sin \theta), (y \cos \theta,-2\sin \theta) \big\rangle
&= \langle x, y \rangle \cos^2\theta  - 4\sin^2\theta \\
& \leq 4\cos^2\theta  - 4\sin^2\theta\\
& \le 4 - 8 \sin^2\theta,
\end{align*}
which is at most $2$ provided that $\sin^2\theta \ge 1/4$. Therefore, for any
choice of $\theta$ such that $1/4 \le \sin^2 \theta \le 1/3$, the code
$T$ of size $196560 + |S|$ has maximal inner product $2$ and is thus a
kissing configuration.
\end{proof}

To deal with higher dimensions, we will use disjoint sets
$S_1,\dots,S_{51} \subset \sC$ with the same property that $\langle x,
y \rangle \le 1$ for all distinct $x, y \in S_i$.

\begin{lemma} \label{lemma:probmethod}
Let $S_1=S$.  Then there are disjoint subsets $S_2,S_3,\ldots \subset
\sC$ such that $\langle x, y \rangle \le 1$ for all distinct $x, y \in
S_i$ and
$$
|S_i| \ge |S|\bigg(1 - \frac{\sum_{j=1}^{i-1}|S_j|}{196560} \bigg).
$$
Furthermore, if $S$ is antipodal, then we can take $S_2,S_3,\dots$ to
be antipodal as well.
\end{lemma}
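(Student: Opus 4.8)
The plan is to build the blocks $S_i$ one at a time by a greedy averaging argument over $G$, the automorphism group of $\Lambda_{24}$, exploiting the fact that $G$ acts transitively on the $196560$ minimal vectors $\sC$. The starting point is that every $g \in G$ preserves inner products, so for any $g$ the translate $gS$ again satisfies $\langle x,y\rangle \le 1$ for all distinct $x,y \in gS$, and $gS$ is antipodal whenever $S$ is. Thus every translate $gS$ is an admissible candidate for the next block, and the only thing to arrange is that $gS$ be essentially disjoint from the blocks chosen so far.

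Concretely, suppose $S_1,\dots,S_{i-1}$ have already been constructed, and put $U = S_1 \cup \dots \cup S_{i-1}$, so that $|U| = \sum_{j=1}^{i-1}|S_j|$ by disjointness. I would then choose $g \in G$ uniformly at random. For a fixed $x \in S \subseteq \sC$, transitivity makes $gx$ uniformly distributed over $\sC$, so $\Prob[gx \in U] = |U|/196560$; summing over $x \in S$ and using linearity of expectation gives
$$
\expt\bigl[\,|gS \setminus U|\,\bigr] = |S| - \frac{|S|\,|U|}{196560} = |S|\biggl(1 - \frac{\sum_{j=1}^{i-1}|S_j|}{196560}\biggr).
$$
Hence some particular $g \in G$ achieves $|gS \setminus U|$ at least this value, and I would set $S_i = gS \setminus U$. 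By construction $S_i$ is disjoint from $S_1,\dots,S_{i-1}$; being a subset of $gS$, it still has all pairwise inner products between distinct elements at most $1$; and it has the required size. For the antipodal refinement, if $S$ and each earlier block is antipodal then $U$ and $gS$ are antipodal, so a vector of $gS$ lies in $U$ exactly when its negative does; therefore $S_i = gS \setminus U$ is again antipodal. Iterating yields the full sequence $S_2,S_3,\dots$ (the estimate becoming vacuous, and one may take $S_i=\emptyset$, once $\sum_{j<i}|S_j|$ reaches $196560$).

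I do not expect a genuine obstacle here. The two inputs that must be pinned down are the transitivity of the automorphism group of $\Lambda_{24}$ on its minimal vectors, which is classical, and the elementary pigeonhole step that $|gS\setminus U|$ attains its mean for some $g$; the antipodal bookkeeping is immediate once one observes that an antipodal $U$ is removed from $gS$ in antipodal pairs. If anything requires a moment's care it is only ensuring the blocks stay genuinely disjoint as the process runs, which the definition $S_i = gS \setminus U$ handles automatically.
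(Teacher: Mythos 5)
Your argument is correct and is essentially the paper's own proof: both average over a uniformly random $g$ in the automorphism group of $\Lambda_{24}$, use transitivity on the $196560$ minimal vectors to compute the expected overlap of $gS$ with the union of the earlier blocks, pick a $g$ achieving the mean, and set $S_i = gS \setminus \bigcup_{j<i} S_j$, with the same observation handling antipodality. The only cosmetic difference is that you bound $\expt|gS\setminus U|$ from below while the paper bounds $\expt|gS\cap U|$ from above, which is the same computation.
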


\pagebreak 

Of course, $S_i = \emptyset$ if $i$ is sufficiently large. The proof of
\fullref{lemma:probmethod} uses the\break{} 
probabilistic method, rather than an explicit construction.
This approach is surely not\break{} 
optimal, but it is convenient. Furthermore, it naturally leads
to a probabilistic algorithm that can be used to obtain these
sets with the aid of a computer.

\begin{proof}
Let $G$ be the automorphism group of the Leech lattice.  We will obtain
the sets $S_2,S_3,\dots$ by induction as subsets of translates of $S$
by elements of $G$.  First, we compute the expected size of the
intersection of $gS$ with $S_1 \cup \dots \cup S_{i-1}$ when $g$ is
chosen uniformly at random from $G$.  Since $S_1,\dots,S_{i-1}$ are
disjoint, we have
$$
\bigexpt_{g \in G} \big|gS \cap (S_1 \cup S_2 \dots \cup S_{i-1})\big| =
\sum_{j=1}^{i-1} \bigexpt_{g \in G} \big|gS \cap S_j\big|.
$$
The terms on the right side are easily computed, via
\begin{align*}
\bigexpt_{g \in G}\big|gS \cap S_j\big|
&=
\sum_{x \in S} \,\Prob_{g \in G} [gx \in S_j] \\ 
&= \sum_{x \in S} \sum_{y \in S_j} \Prob_{g \in G}[gx = y] \\
&= \frac{|S||S_j|}{196560}.
\end{align*}
In the last line, we have used the fact that $G$ acts transitively on
the $196560$ minimal vectors; therefore, $gx$ is uniformly distributed
among those vectors as $g$ varies, and
$$
\Prob_{g \in G}[gx = y] = \frac{1}{196560}.
$$

Now there must be a translate $gS$ whose overlap with $S_1 \cup \dots
\cup S_{i-1}$ is at most the expectation, namely
$$
|S|\frac{\sum_{j=1}^{i-1}|S_j|}{196560}.
$$
Letting $S_i = gS \setminus \bigcup_{j=1}^{i-1} S_j$ completes the
proof.  (Note that if $S$ is antipodal, then\break{} 
each of these sets will be as well.)
\end{proof}

\begin{theorem} \label{theorem:master}
Let $S_1,S_2,\dots$ be as in \fullref{lemma:probmethod}.  Suppose we
have partitioned a kissing configuration in the unit sphere $S^{d-1}
\subset \R^d$ into subsets $T_1,\dots,T_N$ such that for distinct $x,y
\in T_i$, we have $\langle x,y \rangle \le -1/2$.  Then the kissing
number in dimension $24+d$ is at least
$$
\sum_{i=1}^N \big(|T_i|-1\big) |S_i|,
$$
and it is achieved by the set
$$
\bigg\{(x,0) \in \R^{24} \times \R^d :
x \in \sC \setminus \bigcup_{i=1}^N S_i \bigg\} \cup \bigcup_{i=1}^N
\big\{\big(x \sqrt{2/3}, y \sqrt{4/3}\big) : x \in S_i, y \in T_i\big\}.
$$
\end{theorem}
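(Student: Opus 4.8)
The plan is to check directly that the displayed set, which I will call $\mathcal{K}$, is a kissing configuration in $\R^{24+d}$, and then to count its points. First I would record the normalizations: every point of $\mathcal{K}$ has norm $4$, since $|(x,0)|^2=|x|^2=4$ for a minimal vector $x$ of $\Lambda_{24}$, and $|(x\sqrt{2/3},\,y\sqrt{4/3})|^2=\tfrac{2}{3}|x|^2+\tfrac{4}{3}|y|^2=\tfrac{2}{3}\cdot 4+\tfrac{4}{3}\cdot 1=4$ for a minimal vector $x$ and a unit vector $y$. For norm-$4$ vectors $u,v$ one has $|u-v|^2=8-2\langle u,v\rangle$, so $\mathcal{K}$ has minimal angle at least $\pi/3$ --- that is, it is a kissing configuration --- if and only if $\langle u,v\rangle\le 2$ for every pair of distinct points $u,v\in\mathcal{K}$. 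The scalars $\sqrt{2/3}$ and $\sqrt{4/3}$ are calibrated precisely so that the extremal pairs below land exactly on the value $2$ (and so that norm $4$ is preserved).

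The verification then splits into three types of pairs. For two equatorial points $(x,0)$ and $(x',0)$ with $x\ne x'$, the inner product $\langle x,x'\rangle$ is at most $2$ because distinct minimal vectors of $\Lambda_{24}$ have inner product at most $2$. For an equatorial point $(x,0)$ with $x\in\sC\setminus\bigcup_j S_j$ and a lifted point $(x'\sqrt{2/3},\,y'\sqrt{4/3})$ with $x'\in S_i$, we have $x\ne x'$ (since $x$ is not in $\bigcup_j S_j$ while $x'$ is), so the inner product equals $\sqrt{2/3}\,\langle x,x'\rangle\le 2\sqrt{2/3}<2$. The main case is two lifted points $(x\sqrt{2/3},\,y\sqrt{4/3})$ and $(x'\sqrt{2/3},\,y'\sqrt{4/3})$ with $x\in S_i$, $y\in T_i$, $x'\in S_j$, $y'\in T_j$, whose inner product equals $\tfrac{2}{3}\langle x,x'\rangle+\tfrac{4}{3}\langle y,y'\rangle$. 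If $i=j$ and $x=x'$, then $y\ne y'$ lie in the same $T_i$, so $\langle y,y'\rangle\le-\tfrac{1}{2}$ by hypothesis and the inner product is $\tfrac{2}{3}\cdot 4+\tfrac{4}{3}\cdot(-\tfrac{1}{2})=2$. If $i=j$ and $x\ne x'$, then $\langle x,x'\rangle\le 1$ by the defining property of $S_i$, while $\langle y,y'\rangle\le 1$ (it is $1$ if $y=y'$ and at most $-\tfrac{1}{2}$ otherwise), so the inner product is at most $\tfrac{2}{3}+\tfrac{4}{3}=2$. Finally, if $i\ne j$, then $x\ne x'$ since the $S_i$ are disjoint, so $\langle x,x'\rangle\le 2$, and $y\ne y'$ since the $T_i$ are disjoint, so $\langle y,y'\rangle\le\tfrac{1}{2}$ because the partitioned set is a kissing configuration; hence the inner product is at most $\tfrac{2}{3}\cdot 2+\tfrac{4}{3}\cdot\tfrac{1}{2}=2$. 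In every case $\langle u,v\rangle\le 2$, so $\mathcal{K}$ is a kissing configuration in $\R^{24+d}$.

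It remains to count $\mathcal{K}$. By \fullref{lemma:probmethod} the sets $S_1,\dots,S_N$ are disjoint subsets of $\sC$, and $|\sC|=196560$, so there are $196560-\sum_{i=1}^N|S_i|$ equatorial points $(x,0)$. Among the lifted points, each $x$ lies in exactly one $S_i$ (disjointness again), so the lifted points are in bijection with the pairs $(x,y)$ with $x\in S_i$ and $y\in T_i$, giving $\sum_{i=1}^N|S_i|\,|T_i|$ of them; and no lifted point is equatorial because its last $d$ coordinates do not vanish. Hence $|\mathcal{K}|=196560+\sum_{i=1}^N(|T_i|-1)|S_i|$, and the kissing number in dimension $24+d$ is at least this quantity, in particular at least $\sum_{i=1}^N(|T_i|-1)|S_i|$.

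There is no serious obstacle beyond bookkeeping. The one genuine point is that the hypothesis $\langle x,y\rangle\le-1/2$ on each $T_i$ --- strictly stronger than $T_i$ merely being a kissing configuration --- is exactly what rescues the subcase $x=x'$, and that the constants $\sqrt{2/3},\sqrt{4/3}$ are the unique choice (given the norm constraint) making all three extremal subcases tight at $2$; changing either constant breaks the argument. The care in writing the proof lies in tracking which of the bounds ($\le 2$ for distinct Leech minimal vectors, $\le 1$ within a single $S_i$, $\le 1/2$ for distinct vectors of a kissing configuration, $=4$ for equal vectors, $\le-1/2$ within a single $T_i$) applies to each factor, and in noting that distinctness of the two points forces $y\ne y'$ precisely when $i=j$ and $x=x'$.
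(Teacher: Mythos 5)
Your proof is correct and follows essentially the same route as the paper's: verify the norm is $4$, bound the inner product by $2$ in each of the same case splits (equatorial--equatorial, equatorial--lifted, lifted--lifted within one $S_i\times T_i$ split according to whether $x=x'$ or $x\ne x'$, and lifted--lifted across $i\ne j$ using disjointness of the $S_i$ and $T_i$), then count. The only differences are presentational — you make explicit a couple of steps the paper leaves implicit (the equatorial--equatorial case and the final count) — so there is nothing to add.
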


Note that the minimal angle constraint on $T_i$ implies that $|T_i| \le
3$, and $|T_i|=3$ is possible only if $T_i$ consists of three points
forming an equilateral triangle on a great circle. Of course it is
pointless to take $|T_i|=1$, and to optimize the bound we should take
$|T_1| \ge |T_2| \ge \dots \ge |T_N|$, assuming $|S_1| \ge |S_2| \ge
\dots \ge |S_N|$.

\begin{proof}
First, observe that the points $\big(x \sqrt{2/3}, y \sqrt{4/3}\big)$ have
norm
$$
\tfrac{2}{3}|x|^2  + \tfrac{4}{3}|y|^2 =
\tfrac{2}{3} \cdot 4 + \tfrac{4 }{3}\cdot 1 = 4,
$$
as they should.  Within each set $\big\{\big(x \sqrt{2/3}, y
\sqrt{4/3}\big) : x \in S_i, y \in T_i\big\}$, the inner product
between two distinct points $\big(x \sqrt{2/3}, y \sqrt{4/3}\big)$ and
$\big(x' \sqrt{2/3}, y' \sqrt{4/3}\big)$ is at most
$$
\tfrac{2}{3} \cdot 4 - \tfrac{4}{3} \cdot \tfrac{1}{2} = 2
$$
if $y \ne y'$, and at most
$$
\tfrac{2}{3} \cdot 1 + \tfrac{4}{3} \cdot 1 = 2
$$
if $x \ne x'$.  Inner products between two of these sets are also
easily dealt with: for $i \ne j$, the inner product between $\big(x
\sqrt{2/3}, y \sqrt{4/3}\big)$ with $x \in S_i$, $y \in T_i$ and
$\big(x' \sqrt{2/3}, y' \sqrt{4/3}\big)$ with $x' \in S_j$, $y' \in
T_j$ is at most
$$
\tfrac{2}{3} \cdot 2 + \tfrac{4}{3} \cdot \tfrac{1}{2} = 2.
$$
Finally, the inner product between $\big(x \sqrt{2/3}, y
\sqrt{4/3}\big)$ and $(x',0)$ is at most $2 \sqrt{2/3}$, which is
strictly less than $2$.
\end{proof}

Note that the code from \fullref{theorem:master} cannot be even
locally jammed, provided that $\sC \ne \bigcup_{i=1}^N S_i$, because
the points in
$$
\bigg\{(x,0) \in \R^{24} \times \R^d : x \in \sC \setminus \bigcup_{i=1}^N
S_i \bigg\}
$$
are not adjacent to any of the points of the code that are not in this set.

We do not know the best way to apply \fullref{theorem:master}, but
we can use it as follows.  Given\break{} 
any kissing configuration in $\R^d$ of size $K$, we want to
partition it into antipodal pairs and equilateral triangles (or
singletons if necessary).  Of course, if it is antipodal we can
simply partition it into $K/2$ antipodal pairs, but that will
generally not be optimal.\pagebreak{}\break{} 
If there is an Eisenstein structure,
as is the case for $A_2$, $D_4$, $E_6$ and $E_8$, then we
partition it into regular hexagons using that structure and
divide each hexagon into two equilateral triangles. For $A_3$,
$D_5$ and $E_7$, we partition a cross section using its
Eisenstein structure and then fill the rest with antipodal
pairs. This yields the bounds shown in \fullref{table:bounds}.
To compute $|S_i|$, we simply use \fullref{lemma:probmethod}
recursively, taking into account that $|S_i|$ must be not just
an integer, but also even when $S_i$ is antipodal. For example,
if $|S|=480$ with $S$ antipodal, then $|S_2| \ge 480 \cdot
(1-480/196560) \approx 478.83$, which implies $|S_2| = 480$.

\renewcommand{\arraystretch}{1.2} 

\begin{table}
\centering
\begin{tabular}{cccc}
Dimension & Lower bound & $|S|=288$ & $|S|=480$\\ \hline
$25$ & $196560 + |S_1|$ & 196848 & 197040\\
$26$ & $196560 + 2|S_1|+2|S_2|$ & 197712 & 198480\\
$27$ & $196560 + 2|S_1| + 2|S_2| + \sum_{i=3}^5 |S_i|$ & 198576 & 199912 \\
$28$ & $196560 + 2\sum_{i=1}^8 |S_i|$ & 201156 & 204188 \\
$29$ & $196560 + 2\sum_{i=1}^8 |S_i| + \sum_{i=9}^{16} |S_i|$ & 203430 & 207930 \\
$30$ & $196560 + 2\sum_{i=1}^{24} |S_i|$ & 210200 & 219008 \\
$31$ & $196560 + 2\sum_{i=1}^{24} |S_i| + \sum_{i=25}^{51} |S_i|$ & 217588 & 230872
\end{tabular}
\caption{New lower bounds on kissing numbers} \label{table:bounds}
\end{table}

%

\renewcommand{\arraystretch}{1} 

We do not expect that these bounds are anywhere close to being optimal,
and already in $\R^{32}$ they are worse than the previous record: they
give $266544$, while the record is $276032$ ($|S|=554$ would be
required to break the record).  However, our bounds improve upon all
cases from dimension $25$ to $31$.

\vspace{-0.15cm} 
\section{Open problems}
\vspace{-0.15cm} 

We conclude with some open problems.  The most basic is whether there
are spherical codes that span the ambient space and are jammed but not
infinitesimally jammed.  We suspect that there are such codes, but we have
not found one.  A related question is how to test efficiently whether a
code is jammed (which would be possible if jamming and infinitesimal
jamming were equivalent, although one could hope for even faster
algorithms). Such an algorithm might be based on higher-order variants
of infinitesimal jamming, but they are far more subtle than one might
expect (see Connelly and Servatius~\cite{ConS}).

Our new kissing records in dimensions $25$ through $31$ can presumably
be improved, and it would be very interesting to know how far these
techniques can be pushed, or how\pagebreak{}\break{} 
to construct much better arrangements. The ratio of the upper
bounds from Shtrom~\cite{S} to the lower bounds proved here
grows roughly like $1.4^{d}$ in $24+d$ dimensions for $1 \le d
\le 7$, so there is considerable room for improvement.  Note
that optimizing the sizes of the sets $R$ and $S$ from
\fullref{section:kissing25to31} can be viewed as maximizing the
size of cliques in highly symmetrical graphs.  Finding large
cliques is NP-hard in general, but that does not settle the
question of how well one can solve this problem in practice.

Among kissing problems in low dimensions, dimensions $17$ through $23$
seem ripe for improvement, although it is unlikely that the approach
we have used here can be made to work.  Dimension $16$ is particularly
interesting, and we would very much like to know whether the $4320$
minimal vectors of the Barnes--Wall lattice solve the kissing problem.
They are certainly not the unique solution, because we have found
that at least one of the $16$--dimensional packings constructed by Conway
and Sloane~\cite{Allthebest} has a different kissing configuration
of the same size.  It seems unlikely that one could classify all the
possibilities in $\R^{16}$, but it might be possible to extend the
conjecturally exhaustive list from \fullref{section:kisslist} to higher
dimensions, perhaps up through $\R^{12}$.

In \fullref{section:9d} we gave a list of the best kissing
configurations known in $\R^9$ through\break{} 
$\R^{12}$, but we suspect even more such configurations remain
to be found.  As mentioned in that section, the $E_7$ root
system fits into the same framework.  In fact its three other
competitors do too, in the following sense. In each case, one
can find a cross polytope contained in the configuration (for
example, by a randomized, greedy algorithm).  Using it to
define the coordinate system yields simple rational coordinates
for each point. In the $E_7$ case, these coordinates come from
a constant weight code. In the other three cases they do not,
but the coordinates can be obtained by a systematic mutation of
the $E_7$ case. Perhaps one could obtain additional kissing
configurations in nine through twelve dimensions via a similar
construction.

The problem of rigidity naturally generalizes to many other ambient
spaces, such as projective spaces, but this generalization presents new
and interesting features.  For example, a projective configuration is
not determined by its pairwise distance matrix, and in complex
projective space there are even continuous families of optimal codes
with exactly the same pairwise distances (see Cohn and Kumar~\cite[page~129]{Univopt}).

Even in Euclidean space, which is the most thoroughly studied case so
far, there are unresolved questions.  How can one test whether a
periodic packing is jammed?  If one restricts attention to packings
consisting of $N$ translates of a lattice (that is, those with $N$ particles
per unit cell), then one can test for infinitesimal jamming
and hence jamming (see Donev, Torquato, Stillinger and
Connelly~\cite{DTSC}).  However, the answer may depend on $N$.
For example, the laminated lattice $\Lambda_9$ is jammed with $N=1$
(that is, it cannot be deformed as a lattice packing), but not with $N=2$
(giving rise to the\pagebreak{}\break{} 
fluid diamond packings of Conway and Sloane~\cite{Allthebest}).
We know of no bound for how large $N$ must be to detect a lack
of rigidity.

Finally, we conjecture that in all sufficiently high
dimensions, there exist optimal kissing configurations with no
contacts whatsoever (that is, no pairs of points with inner
product $1/2$), so they are unjammed in the strongest possible
sense.  This phenomenon occurs in $\R^3$, but we know of no
higher-dimensional cases.  Part of our motivation for making
this conjecture is that we know of no large, jammed kissing
configurations at all in high dimensions.  The $D_n$ root
system is jammed for $n>3$, but it contains only $2n(n-1)$
points, which is tiny compared with the exponential growth of
the kissing number (see, for example,
Conway and Sloane~\cite[pages~23--24]{SPLAG}).  Are there exponentially large
jammed kissing configurations in high dimensions?  Are there
even any of greater than quadratic size?  For example, is the
kissing configuration of the Barnes--Wall lattice $BW_{2^k}$ in
$\R^{2^k}$ always jammed?  (It has size asymptotic to $C \cdot
2^{k(k+1)/2}$, where $C$ is a constant \cite[page~24]{SPLAG}.)

\vspace{-0.09cm} 
\section*{Acknowledgements}
\vspace{-0.09cm} 

We thank K\'aroly Bezdek, Noam Elkies, Michel Goemans and Achill
Sch\"urmann for\break{} 
helpful discussions, Patric \"Osterg\r{a}rd for providing
computer files for Best's constant\break{} 
weight codes, and an anonymous referee for providing useful
feedback on the manu\-script. 
Abhinav Kumar was supported in part by National Science
Foundation grants DMS-0757765 and DMS-0952486 and by a grant
from the Solomon Buchsbaum Research Fund, and he thanks
Princeton University for its hospitality. Yang Jiao\break{} 
and Salvatore Torquato were supported in part by NSF grants
DMS-0804431 and DMR-0820341.

\appendix

\vspace{-0.09cm} 
\section{Data files}
\label{appendix:data} \vspace{-0.09cm} 

As supplementary information for this paper, we
have made available sixteen data files through the
\texttt{arXiv.org} e-print archive, where it is paper number
\href{http://arxiv.org/abs/1102.5060}{\texttt{arXiv:1102.5060}}.
One can access these files by downloading the source files for the paper.
The data files can also be downloaded from the web page for this article
(\href{http://dx.doi.org/10.2140/gt.2011.15.2235}{doi:10.2140/gt.2011.15.2235}).

Ten of them describe the kissing configurations enumerated in
\fullref{section:kisslist}.  These files are each named after the
corresponding configuration: \texttt{5-40a.txt}, \texttt{5-40b.txt},
\texttt{6-72a.txt} through \texttt{6-72d.txt}, and \texttt{7-126a.txt}
through \texttt{7-126d.txt}.  The first line of the file specifies the
number $N$ of points, and the second line specifies the dimension $n$
of the ambient Euclidean space.  The third line consists of $n$
positive\pagebreak{}\break{} 
integers $d_1,\dots,d_n$ (separated by spaces), which are the
coefficients of the diagonal quadratic form used to measure
distances. Finally, the remaining $N$ lines each give the $n$
coordinates of one of the points in the configuration (again
separated by spaces).  The scaling has been chosen so that all
the coordinates will be integers, and the inner product between
points $x$ and $y$ is defined by
$$
\langle x,y \rangle = \sum_{i=1}^n d_i x_i y_i.
$$
Equivalently, if we use the standard inner product, then the
$i$th coordinate must be scaled by $\sqrt{d_i}$, but phrasing
it in terms of changing the inner product avoids the need to
use irrational numbers as coordinates.  (There exist different
coordinate systems that use only rational numbers, even with
the standard inner product, but the coordinates used here are
compatible with the fibering construction from
\fullref{section:kisslist}.) Note that within each file,
all the vectors have the same norm, but they are not unit
vectors.

Four of the files, namely \texttt{b9-18.txt},
\texttt{b10-30.txt}, \texttt{b11-35.txt} and
\texttt{b12-51.txt}, describe the constant weight binary codes
used to build the kissing configurations from \fullref{9to12}
in \fullref{section:9d}. The file \texttt{bn-N.txt}
contains all the codes of block length $n$, size $N$, constant
weight $4$ and minimal distance $4$, up to isomorphism. (We
thank Patric \"Osterg\r{a}rd for providing these codes.)  Each
codeword is given on a line by itself, with the binary digits
separated by spaces, and there is a blank line between
different codes.

The remaining two files, \texttt{R.txt} and \texttt{S.txt},
describe the $70$--point configuration $R$ and the $480$--point
configuration $S$ from \fullref{section:kissing25to31}.
These files are in a slightly different format: they omit the
first three lines ($N$, $n$ and $d_1,\dots,d_n$).  Instead,
each line\break{} 
specifies the $24$ coordinates of one of the points in the
coordinate system used by Conway and Sloane~\cite[page~133,
Figure~4.12]{SPLAG}, but with the irrational factor of
$1/\sqrt{8}$ omitted.

\vspace{-0.2cm} 
\providecommand{\BIBZs}{Zs}

\end{document}